\newtheorem{theorem}{Theorem}[section] 
\newtheorem{theorem*}{Theorem}[]
\theoremstyle{definition}
\newtheorem{corollary}[theorem]{Corollary} 
\newtheorem{lemma}[theorem]{Lemma}
\newtheorem{proposition}[theorem]{Proposition}
\newtheorem{definition}[theorem]{Definition}
\newtheorem{remark}[theorem] {Remark}
\numberwithin{equation}{section} 
\newtheoremstyle{cited}
  {3pt}
  {3pt}
  {\itshape}
  {}
  {\bfseries}
  {.}
  {.3em}
  {\thmname{#1} \thmnumber{\bf{#2}} \thmnote{\normalfont#3}}
\theoremstyle{cited}
\newtheorem*{thm*}{Theorem}
\newcommand{\Rn}{\ensuremath{\mathbb{R}^n}}
\newcommand{\R}{\ensuremath{\mathbb{R}}}
\newcommand{\N}{\ensuremath{\mathbb{N}}}
\newcommand{\obal}{\ensuremath{\mathbb{R}^n} \setminus B_r}
\newcommand{\frlap}{\ensuremath{(-\Delta)^s}}
\newcommand{\eee}{\ensuremath{\varepsilon}}
\newcommand{\F}{\ensuremath{\mathcal{F}}}
\newcommand{\Sa}{\ensuremath{\mathcal{S}}}
\newcommand{\wck}{\ensuremath{\widecheck}}
\renewcommand\footnotemark{}
\title[Green function for the fractional Laplacian]{Some observations on the Green function for the ball in the fractional Laplace framework}
\author {Claudia Bucur}
\subjclass{Primary: 35C15, 35S05. Secondary: 35S30, 31B10.}
\keywords{Fractional Laplacian, Green function, Poisson Kernel, Fundamental solution, Mean value property.}
\email{claudia.bucur@unimi.it}
\begin{document}
\maketitle

\centerline{\scshape Claudia Bucur}
\medskip
{\footnotesize 
 \centerline{Dipartimento di Matematica ``Federigo Enriques''}
\centerline{Universit\`a degli Studi di Milano} 
   \centerline{Via Cesare Saldini, 50, I-20133, Milano, Italy}
 } 

\bigskip

 \begin{abstract}
We consider a fractional Laplace equation and we give a self-contained elementary exposition of the representation formula for the Green function on the ball. In this exposition, only elementary calculus techniques will be used, in particular, no probabilistic methods or computer assisted algebraic manipulations are needed.  The main result in itself is not new (see for instance \cite{conto,stableprocess}), however we believe that the exposition is original and easy to follow, hence we hope that this paper will be accessible to a wide audience of young researchers and graduate students that want to approach the subject, and even to professors that would like to present a complete proof in a PhD or Master Degree course.
\end{abstract}
\tableofcontents

\section{Introduction}

The Green function for the ball in a fractional Laplace framework naturally arises in the study of the representation formulas for the fractional Laplace equations. In particular, in analogy to the classical case of the Laplacian, given an equation with a known forcing term on the ball and vanishing Dirichlet data outside the ball, the representation formula for the solution is precisely the convolution of the Green function with the forcing term. As in the classical case, the Green function is introduced in terms of the Poisson kernel. For this, we will provide both the representation formulas for the problems
	\begin{equation}\label{LaplaceeqD}
		\begin{cases}
			\frlap u= 0   \qquad  &\mbox{ in }  {B_r},    \\
			 u= g \qquad  &\mbox{ in } {\obal}
		\end{cases}
	\end{equation}
and
	\begin{equation}\label{PoissoneqD}
		\begin{cases}
   			\frlap u= g \qquad  &\mbox{ in }    {B_r},    \\
			 u= 0   \qquad  &\mbox{ in }  {\obal}
	\end{cases}
	\end{equation}
in terms of the fractional Poisson kernel and respectively the Green function. Moreover, we will prove an explicit formula for the Green function on the ball.

Here follow some notations and a few preliminary notions on the fractional Laplace operator and on the four kernels that play particular roles in our study: the $s$-mean kernel, the fundamental solution, the Poisson kernel and the Green function.

We briefly introduce the Schwartz space (refer to \cite{Schwartz} for details) as
 the functional space
	\[ \mathcal{S} ({\Rn}): = \left \{  f \in C^\infty(\Rn)  \text{ s.t. }    \forall \alpha, \beta \in \mathbf{N}^n_0, \,  \sup_{x\in {\Rn}}|x^{\beta} D^{\alpha} f(x)|< + \infty  \right \}. \]
In other words, the Schwartz space consists of smooth functions whose derivatives (including the function itself) decay at infinity faster than any power of $x$; we say, for short, that Schwartz functions are rapidly decreasing. Endowed with the family of seminorms
	\begin{equation} \label{seminormss} [f]_{\Sa(\Rn)}^{\alpha,N} =\sup_{x\in {\Rn}} (1+|x|)^N  \sum_{|\alpha| \leq N}| D^{\alpha} f(x)|, \end{equation}
	the Schwartz space is a locally convex topological space. We denote by $\Sa'(\Rn)$ the space of tempered distributions, the topological dual of $\Sa(\Rn)$.

We set the following notations for the Fourier and the inverse Fourier transform (see for instance, \cite{Fourier} for details)
		\begin{equation*}
		\widehat f(\xi) = \mathcal{F} f(\xi):=  \int_{\Rn} f(x) e^{- {2\pi}  i x \cdot \xi} \, dx
	\end{equation*}
	{respectively}
	\begin{equation*}	
		 \wck {f}(x)=  \mathcal{F}^{-1}  f(x)  = \int_{\Rn}  f (\xi) e^{ {2\pi} i x \cdot \xi} \, d\xi.
	\end{equation*}
Here the original space variable is denoted by $x \in \Rn$ and the frequency variable by $\xi \in \Rn$. We recall that the Fourier and the inverse transform are well defined for $f\in L^1(\Rn)$, whereas $f(x)= \F \big(\F^{-1} f)(x) =  \F^{-1} \big(\F f)(x)$ almost everywhere if both $f$ and $\widehat f\in L^1(\Rn)$, and pointwise if $f$ is also continuous.
Also for all $ f,\,g \in L^1(\Rn)$
	\[  \int_{\Rn} \widehat f(\xi)  g(\xi) \, d\xi = \int_{\Rn}f(\xi) \widehat g (\xi) \, d\xi  . \]
 The pointwise product is taken into the convolution product and vice versa, namely for all $ f,\,g \in L^1(\Rn)$
	\[  \mathcal{F} (f*g)=  \mathcal{F}(f) \, \mathcal{F}(g) . \]
	On the Schwartz space, the Fourier transform gives a continuous bijection between $\mathcal{S}(\Rn)$  and $\mathcal{S}(\Rn)$.
We say that $\widehat{f}$ is the Fourier transform of $f$ in a distributional sense, for $f$ that satisfies $ \int_{\Rn} \frac{|f(x)|}{1+|x|^{p}}\, dx <\infty $ for some $p \in \mathbb{N}$ if, for any $\varphi \in \mathcal{S}(\Rn)$ we have that
	\begin{equation} \label{dists1} \int_{\Rn} \widehat f(x) \varphi(x) \, dx= \int_{\Rn} f(x) \widehat \varphi (x) \, dx.\end{equation} We remark that the integral notation is used in a formal manner whenever the arguments are not integrable.

Let $s\in(0,1)$ be fixed. We introduce the fractional Laplacian for $u$ belonging to the Schwartz space.
\begin{definition} The fractional Laplacian of $u \in \mathcal{S} ({\Rn})$ is defined as
	\begin{equation}\label{frlapdef}
		\begin{split}
		 \frlap u(x)\; :=\; & C (n,s) P.V. \int_{\Rn} \frac{ u(x)- u(y)}{|x-y|^{n+2s}}  \, dy \\
					\;=\; & C(n,s) \lim_{\eee \to 0} \int_{{\Rn}\setminus B_{\eee}(x)} \frac{u(x) - u(y)}{|x - y|^{n+2s}}\, dy,
		\end{split}
	 \end{equation}
where $C(n,s)$ is a constant depending only on $n$ and $s$.
\end{definition}
Here, \emph{P.V.} is a commonly used abbreviation for "in the principal value sense" (as defined by the latter equation). The dimensional constant $C(n, s)$ is given in \cite{galattica}, formula 3.2, as :
	\begin{equation} \label{GCNS}
		C(n, s) :=   \bigg(\int_{\Rn} \frac{1 - \cos(\eta_1)}{|\eta|^{n+2s}} \, d\eta\bigg)^{-1},
	\end{equation}
where $\eta_1$ is, up to rotations, the first coordinate of $\eta\in \Rn$.
\begin{remark}\label{blacaz}
 The definition \eqref{frlapdef} is well posed for smooth functions belonging to a weighted $L^1$ space, that we define as follows. For $s\in (0,1)$
	\[L^1_s (\Rn):=\Big\{ u \in L^1_{\text{loc}}(\Rn)\; \mbox{ s.t. } \; \int_{\Rn}\frac{ |u(x)|}{1+|x|^{n+2s}} \, dx <\infty\Big\},\]
	endowed naturally with the norm
		\[ \| u\|_{L^1_s({\Rn})} := \int_{\Rn}\frac{ |u(x)|}{1+|x|^{n+2s}} \, dx.\]
Let $\eee$ be positive, sufficiently small. Then indeed, for $u \in L_s^1(\Rn)$ and $C^{0,2s+\eee}$ (or $C^{1,2s+\eee-1}$ for $s\geq 1/2$) in a neighborhood of $x\in \Rn$, the fractional Laplacian is well defined in $x$ as in \eqref{frlapdef}. See for the proof Proposition 2.1.4 in \cite{Silvestre}, where an approximation with Schwartz functions is performed. 	We often use this type of regularity to obtain pointwise solutions to the posed equations. We will always write $C^{2s+\eee}$ to denote both $C^{0,2s+\eee}$ for $s<1/2$  and $C^{1,2s+\eee-1}$ for $s\geq 1/2$.
\end{remark}


In definition \eqref{frlapdef}, the singular integral can be substituted with a weighted second order differential quotient by performing the changes of variables $\tilde y=x+y$ and $\tilde y=x-y$ and summing up. In this way, the singularity at the origin can be removed, as we observe in the following lemma (see for the proof Section 3.1 in \cite{nonlocal}).

\begin{lemma}
Let $\frlap$  be the fractional Laplace operator defined by \eqref{frlapdef}. Then for any smooth $u$
	\begin{equation} \frlap u(x)= \frac{C(n,s)}{2} \int_{\Rn} \frac{2 u(x) -u(x+y)-u(x-y)} {|y|^{n+2s} } \, dy. \label{frlap2def} \end{equation}
\end{lemma}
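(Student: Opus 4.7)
The strategy, essentially spelled out in the hint preceding the lemma, is to symmetrize the integrand by means of two companion changes of variables and then exploit a Taylor expansion to kill the singularity at the origin.

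First I would rewrite the principal value definition by translating the integration variable. Setting $\tilde y = y - x$ in
\[
\frlap u(x) = C(n,s)\lim_{\eee\to 0}\int_{\Rn\setminus B_\eee(x)} \frac{u(x)-u(y)}{|x-y|^{n+2s}}\, dy,
\]
the ball $B_\eee(x)$ becomes $B_\eee(0)$ and $|x-y|$ becomes $|\tilde y|$, yielding
\[
\frlap u(x) = C(n,s)\lim_{\eee\to 0}\int_{\Rn\setminus B_\eee} \frac{u(x)-u(x+\tilde y)}{|\tilde y|^{n+2s}}\, d\tilde y.
\]
Then I would redo the computation with $\tilde y = x - y$ instead; since the Jacobian has absolute value one, the domain is still the complement of $B_\eee$, and $|x-y| = |\tilde y|$ again, so the identical argument gives the same quantity expressed as
\[
\frlap u(x) = C(n,s)\lim_{\eee\to 0}\int_{\Rn\setminus B_\eee} \frac{u(x)-u(x-\tilde y)}{|\tilde y|^{n+2s}}\, d\tilde y.
\]
Averaging these two equal expressions produces the desired symmetric numerator $2u(x)-u(x+y)-u(x-y)$ under the integral sign, divided by $|y|^{n+2s}$, with the factor $\tfrac{C(n,s)}{2}$ outside.

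The final step, which I regard as the only genuine content here, is to remove the principal value by arguing that the symmetrized integrand is absolutely integrable, so that the limit as $\eee\to 0$ can be passed inside. For this I would split the domain into $B_1$ and its complement. On $B_1$, a second order Taylor expansion of $u$ at $x$ (valid because $u$ is smooth) gives
\[
2u(x)-u(x+y)-u(x-y) = -\,y^\top D^2 u(x) y + o(|y|^2) = O(|y|^2)
\]
as $y\to 0$, so the integrand is controlled by $C\,|y|^{2-n-2s}$ near the origin, which is integrable on $B_1$ because $2-2s>0$. On $\Rn\setminus B_1$ the tail is harmless: bounding the numerator by $2|u(x)|+|u(x+y)|+|u(x-y)|$, one obtains integrability from the assumption $u\in L^1_s(\Rn)$ (or boundedness, in the Schwartz case), as recorded in Remark \ref{blacaz}. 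Dominated convergence therefore legitimizes dropping the limit and the $B_\eee$-cutoff, producing \eqref{frlap2def}.

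The only real subtlety, and the one place I would be careful, is the cancellation in the numerator: without the symmetrization the numerator is only $O(|y|)$, which is not enough to make the integrand absolutely integrable near zero when $s\ge 1/2$. It is precisely the pairing of $u(x+y)$ with $u(x-y)$ that upgrades the pointwise bound to $O(|y|^2)$ and allows the passage from a conditionally convergent principal value to an ordinary Lebesgue integral. Everything else is change-of-variables bookkeeping.
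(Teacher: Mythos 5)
Your proof is correct and follows exactly the approach the paper alludes to just before the lemma (the changes of variables $\tilde y = y - x$ and $\tilde y = x - y$ followed by averaging), which the paper itself delegates to Section 3.1 of \cite{nonlocal} rather than reproducing. The Taylor-expansion observation that the symmetrized numerator is $O(|y|^2)$ near the origin, so that the principal value becomes a genuine Lebesgue integral, is precisely the point of the lemma, and you state it cleanly.
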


For $u \in \mathcal{S}(\Rn)$ the fractional Laplace operator can also be expressed by means of the Fourier transform, according to the following lemma (see Proposition 3.3 in \cite{galattica} for the proof).
\begin{lemma}
Let $\frlap$  be the fractional Laplace operator defined by \eqref{frlapdef}. Then for any $u \in \mathcal{S}(\Rn)$
	\begin{equation} \frlap u(x)=\mathcal{F}^{-1} \Big( \big({2\pi}|\xi|\big)^{2s} \widehat u (\xi)\Big) .\label{frlaphdef} \end{equation}
\end{lemma}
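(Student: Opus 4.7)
The plan is to take the Fourier transform of the symmetric representation \eqref{frlap2def} and exchange the order of integration, reducing the identity to a single scaling integral that matches the defining integral \eqref{GCNS} of the constant $C(n,s)$.

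First, starting from \eqref{frlap2def} and applying the Fourier transform in $x$, the translation rule $\F[u(\cdot\pm y)](\xi) = e^{\pm 2\pi i\, y\cdot\xi}\widehat u(\xi)$ turns the numerator $2u(x)-u(x+y)-u(x-y)$ into $2(1-\cos(2\pi y\cdot\xi))\widehat u(\xi)$. Exchanging the $x$- and $y$-integrals (justified below), this produces
\begin{equation*}
\F(\frlap u)(\xi) \;=\; C(n,s)\,\widehat u(\xi)\int_{\Rn}\frac{1-\cos(2\pi y\cdot\xi)}{|y|^{n+2s}}\,dy.
\end{equation*}
For $\xi\neq 0$ I would evaluate the remaining integral by composing a rotation sending $\xi/|\xi|$ to $e_1$ with the dilation $\tilde y = 2\pi|\xi|\,y$; this preserves the integrand's structure while producing a factor $(2\pi|\xi|)^{2s}$, so that the leftover integral is exactly $1/C(n,s)$ by \eqref{GCNS}. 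The two factors of $C(n,s)$ cancel and one concludes $\F(\frlap u)(\xi) = (2\pi|\xi|)^{2s}\widehat u(\xi)$; applying $\F^{-1}$ gives \eqref{frlaphdef}.

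The main obstacle is justifying the interchange of integrations. Writing $g(x,y)=(2u(x)-u(x+y)-u(x-y))/|y|^{n+2s}$, I would show $g \in L^1(\Rn\times\Rn)$ by splitting into $|y|\leq 1$ and $|y|\geq 1$. In the first regime, a second-order Taylor expansion in $y$ yields $|2u(x)-u(x+y)-u(x-y)|\leq \|D^2u\|_\infty |y|^2$, and since $u\in\Sa(\Rn)$ one can upgrade this to an estimate in $L^1_x$, so that $\int_{\Rn}|g(x,y)|\,dx \leq C|y|^{2-n-2s}$, which is integrable near the origin because $2s<2$. In the second regime, $\int_{\Rn}|g(x,y)|\,dx \leq 4\|u\|_{L^1}|y|^{-n-2s}$, integrable at infinity because $n+2s>n$. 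These two bounds give absolute integrability and allow Fubini, after which the computation above proceeds unobstructed. I expect no further subtlety, since the identification of the scaling integral with $1/C(n,s)$ is an immediate consequence of \eqref{GCNS}.
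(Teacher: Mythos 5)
Your proposal is correct and is essentially the standard proof of this identity: the paper does not reprint an argument here but defers to Proposition 3.3 of the cited Hitchhiker's guide, and that proof proceeds exactly along your lines, namely Fourier-transforming the second-difference representation \eqref{frlap2def}, justifying the interchange via the split $|y|\leq 1$ versus $|y|\geq 1$, and reducing the resulting symbol integral to $1/C(n,s)$ by a rotation and the dilation $\tilde y=2\pi|\xi|y$. The only cosmetic difference is the Fourier normalization, which you have tracked correctly to produce the $(2\pi|\xi|)^{2s}$ factor.
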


We refer usually to pointwise solutions, nevertheless distributional solution will also be employed. Following the approach in \cite{Silvestre} (see Definition 2.1.3),  we introduce a suitable functional space where distributional solutions can be defined.
 Let \[ \Sa_s(\Rn) := \Big\{ f \in C^{\infty}(\Rn) \mbox{ s.t. } \forall \alpha \in \mathbf{N}^n_0, \; \sup_{x\in \Rn} \big(1+|x|^{n+2s}\big) |D^{\alpha} f(x) | <+\infty \Big\}. \]   The linear space $\Sa_s(\Rn)$ endowed with the family of seminorms
 	\begin{equation*} \label{seminormss1}[f]^{\alpha}_{\Sa_s(\Rn)}:=\sup_{x\in \Rn} \big(1+|x|^{n+2s}\big) |D^{\alpha} f (x)|\end{equation*} is a locally convex topological space. We denote with $\Sa_s'(\Rn)$ the topological dual of $\Sa_s(\Rn)$.
 	
We notice that if $\varphi \in \Sa(\Rn)$ then $\frlap \varphi \in \Sa_s(\Rn)$, which makes this framework appropriate for the distributional formulation. In order to prove this, we observe that for any $x\in \Rn \setminus B_1$ the bound
 	\begin{equation} |\frlap \varphi(x)| \leq c_{n,s} |x|^{-n-2s}\label{frb1} \end{equation}
 	follows from the upcoming computation and the fact that $\varphi \in S(\Rn)$
 	\[\begin{split}
 	&\;  |\frlap \varphi (x) |\\ \leq \;&  \int_ {B_{\frac{|x|}{2}}} \frac{\big|2 \varphi (x) -\varphi(x-y) - \varphi(x+y) \big| }{|y|^{n+2s}}\, dy
 	 + 2 \int_ {\Rn \setminus B_{\frac{|x|}{2}}} \frac{\big| \varphi (x)-  \varphi(x+y) \big|}{|y|^{n+2s}}\, dy  \\
 	 \leq &\; c_{n,s} |x|^{-n-2s} \bigg( \sup_{z\in \Rn} (1+|z|)^{n+2}|D^2\varphi (z)|   +  \sup_{z\in \Rn} (1+|z|)^{n}|\varphi (z)|
 	 + \|\varphi \|_{L^1(\Rn)}\bigg)  .
 	\end{split}\]
 	Moreover, we observe that, up to constants,
 		\[ \begin{split}   \partial_{x_i} \frlap \varphi(x) &\;=\partial_{x_i} \F^{-1}\Big(|\xi|^{2s}\widehat \varphi(\xi)\Big)(x)  		
 		= \F^{-1}\Big( i\xi_i|\xi|^{2s}\widehat \varphi(\xi) \Big)(x)\\
 		&\;= \F^{-1}\Big(|\xi|^{2s}\widehat{\partial_{x_i} \varphi} (\xi) \Big) (x)= \frlap \partial_{x_i} \varphi(x).
 			\end{split}\]
Hence, by iterating the presented argument, one proves that $\frlap \varphi \in \Sa_s(\Rn)$, which assures our claim. Then we have the following definition.
\begin{definition}
Let $f \in \Sa'(\Rn)$, we say that $u \in \Sa_s'(\Rn)$ is a distributional solution of
		\[ \frlap u=f \; \mbox{in } \; \Rn\] if
		\begin{equation}\label{disf1} <u,\frlap \varphi>_s= \int_{\Rn} f(x)\varphi(x) \,dx\quad \mbox{for any} \; \varphi \in \Sa(\Rn), \end{equation}
where $<\cdot, \cdot>_s$ denotes the duality pairing of $\Sa_s'(\Rn)$ and $\Sa_s(\Rn)$ and the latter (formal) integral notation designates the pairing $\Sa(\Rn)$ and $\Sa'(\Rn)$.
\end{definition}
We also use the integral notation for the pairing $<\cdot,\cdot>_s$ in a purely formal manner whenever the arguments are not integrable. We notice that the inclusion $L_s^1(\Rn) \subset \Sa_s'(\Rn)$ holds, in particular for any $u\in L_s^1(\Rn)$ and $\psi \in \Sa_s(\Rn)$ we have
	\begin{equation}\begin{split}
	\Big| < u ,\psi>_s  \Big| \leq &\;  \int_{\Rn} |u(x)|\, |\psi(x)|\, dx \leq \int_{\Rn} \frac{|u(x)|}{1+|x|^{n+2s}}  (1+|x|^{n+2s})|\psi(x)|\, dx \\
	\leq&\;  [\psi]^0_{\Sa_s(\Rn)} \| u\|_{L_s^1(\Rn)}.
	\label{db1} \end{split}
	\end{equation}

We introduce now the four functions $A_r$, $\Phi$, $P_r$ and $G$, namely the $s$-mean kernel, the fundamental solution, the Poisson kernel and the Green function. The reader can see Section 2.2 in \cite{PDE} for the theory in the classical case.

\begin{definition}
 Let  $r>0$ be fixed. The function $A_r$ is defined by
	\begin{equation} \label{smeandefn}
	A_r(y) :=  \begin{cases}
		c(n,s)  \displaystyle \frac{r^{2s}}{(|y|^2-r^2)^s|y|^n} \quad &y \in \Rn \setminus \overline{B_r},\\
		0 \quad &y \in \overline B_r,
		\end{cases}
	\end{equation}
where $ c(n,s)$ is a constant depending only on $n$ and $s$.
\end{definition}

\begin{definition} For any $x\in \Rn \setminus \{0\}$ the function $\Phi$ is defined  by
   	\begin{equation}
		\Phi(x) :=
			\begin{cases}
			 \displaystyle a(n,s){|x|^{-n+2s}} \quad &\text {if } n \neq 2s, \\
			\displaystyle a\Big(1,\frac{1}{2}\Big) \log |x| \quad & \text {if } n = 2s  ,
			\end{cases}
	\label{fundsolution}
	\end{equation}
where $a(n,s)$ is a constant depending only on $n$ and $s$.
\end{definition}

\begin{definition}
Let $r>0$ be fixed. For any $ x \in B_r$ and any $ y \in \Rn \setminus \overline{B}_r$, the Poisson kernel $P_r$ is defined by
	\begin{equation}
	 P_r(y,x) :=  c(n,s) \Bigg (\frac {r^2-|x|^2}{|y|^2-r^2}\Bigg)^s \frac {1}{|x-y|^n}. \label{poissondefn}
	\end{equation}
\end{definition}

The Poisson kernel is used to build a function which is known outside the ball and $s$-harmonic in $B_r$. Indeed, it is used to give the representation formula for problem \eqref{LaplaceeqD},  as stated in the following theorem.

 \begin{thm*} [\ref{theorem:DPL}]
 Let $r>0$, $g \in L^1_s(\Rn) \cap C({\Rn})$ and let
\begin{equation}
		 u_g(x) : =
			\begin{cases}	
				\displaystyle  \int_{{\Rn}\setminus B_r} P_r(y,x) g(y)\, dy &\quad  \, \text{if } x\in B_r, \\
				g(x) &\quad \, \text{if } x \in {\obal}.
			\end{cases} \label{solD}
	\end{equation}
Then $u_g$ is the unique pointwise continuous solution of the problem \eqref{LaplaceeqD}
	\begin{equation*}
	\begin{cases}
	\frlap u= 0 \qquad  &\mbox{ in }  {B_r},
\\	u= g \qquad  &\mbox{ in }  {\obal}.
		\end{cases}
	\end{equation*}
\end{thm*}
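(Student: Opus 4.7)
The plan is to verify the three properties: (i) $u_g$ is well defined on $B_r$, (ii) $u_g$ is $s$-harmonic in $B_r$, (iii) $u_g$ is continuous up to $\partial B_r$ with the correct boundary values, and finally (iv) uniqueness via a fractional maximum principle.

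For well-definedness, I would first analyze the decay of $P_r(y,x)$ in $y$. From the explicit expression \eqref{poissondefn}, for fixed $x\in B_r$ and $|y|$ large one has $P_r(y,x) \asymp C_{x,r}\,|y|^{-n-2s}$, while near $\partial B_r$ the factor $(|y|^2-r^2)^{-s}$ is integrable since $s\in(0,1)$. Hence $P_r(\cdot,x)$ can be controlled by the weight $(1+|y|^{n+2s})^{-1}$ uniformly on compact subsets of $B_r$, and the hypothesis $g\in L^1_s(\Rn)$ makes the integral defining $u_g(x)$ absolutely convergent. Combined with $g\in C(\Rn)$ outside $B_r$, this also gives that $u_g\in C(B_r)$ by dominated convergence.

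The heart of the proof is showing $\frlap u_g(x)=0$ for each $x\in B_r$. The clean strategy is to prove that for every fixed $y\in\Rn\setminus\overline{B_r}$, the function $x\mapsto P_r(y,x)$ is $s$-harmonic on $B_r$; then by linearity and an interchange of $\frlap$ with the integral in $y$ (justified by the uniform decay bounds above together with the local $C^{2s+\eee}$-regularity of $P_r(y,\cdot)$ in $x$ via Remark \ref{blacaz}), one gets $\frlap u_g \equiv 0$ on $B_r$. The $s$-harmonicity of the Poisson kernel in its $x$-variable is either verified by a direct (but careful) computation of the principal value integral using the translation/inversion symmetry built into the formula \eqref{poissondefn}, or, more elegantly, deduced from the $s$-mean value identity associated with the kernel $A_r$ introduced in \eqref{smeandefn}: writing $P_r(y,x) = A_r(y)\cdot \big(\tfrac{r^2-|x|^2}{|y|^2}\big)^s \cdot \tfrac{|y|^n}{|x-y|^n}$ (up to a constant) and exploiting that the $s$-mean kernel $A_r$ is already known to represent $s$-harmonic extensions from outside $B_r$ to the center of the ball. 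I expect this calculation -- or, equivalently, the justification of interchanging $\frlap$ and $\int_{\Rn\setminus B_r}$ -- to be the main technical obstacle of the theorem.

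For the boundary behavior, fix $x_0\in\partial B_r$ and let $x\to x_0$ from inside $B_r$. Using $\int_{\Rn\setminus B_r}P_r(y,x)\,dy=1$ (which is itself proved by evaluating the formula on $g\equiv 1$, for which $u_g\equiv 1$ is the obvious solution) one writes
\[
u_g(x)-g(x_0)=\int_{\Rn\setminus B_r}P_r(y,x)\bigl(g(y)-g(x_0)\bigr)\,dy.
\]
Splitting the integral into $|y-x_0|<\delta$ and $|y-x_0|\geq\delta$, the first piece is controlled by the modulus of continuity of $g$ at $x_0$, while on the second piece the factor $(r^2-|x|^2)^s$ in $P_r(y,x)$ tends to $0$ as $x\to x_0$, dominating the singularity coming from $|y-x_0|$ and from the $L^1_s$-tail of $g$.

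Uniqueness follows from the fractional maximum principle applied to the difference $w=u_1-u_2$ of two continuous pointwise solutions: $w$ is $s$-harmonic in $B_r$, vanishes outside $B_r$, and is continuous on $\Rn$, hence attains its extrema on $\overline{B_r}$; evaluating \eqref{frlap2def} at an interior extremum forces $w\equiv 0$ on $\Rn$.
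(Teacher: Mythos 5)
Your overall blueprint (well-definedness, $s$-harmonicity in $B_r$, boundary continuity, uniqueness by maximum principle) matches the theorem, and steps (i), (iii), (iv) are essentially the same as the paper's. The genuine divergence, and the genuine gap, is in (ii).

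The paper does \emph{not} show that $x\mapsto P_r(y,x)$ is $s$-harmonic and interchange $\frlap$ with $\int_{\Rn\setminus B_r}\, dy$. Instead it establishes the $s$-mean value property $A_\rho*u_g(x)=u_g(x)$ and invokes Theorem \ref{theorem:arm} to conclude $\frlap u_g(x)=0$. For $g\in C^\infty_c(\Rn)$ it writes $g=\varphi*\Phi$ (Corollary \ref{lemma:unouno}), rewrites $g$ as an exterior convolution with $\Phi$ using identity \eqref{Ipu}, and then uses \eqref{Ifu} to collapse $A_\rho*u_g$ back to $u_g$; a cutoff/approximation argument then handles general $g\in L^1_s\cap C$. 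This route never needs to differentiate the Poisson kernel.

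Your route has a concrete difficulty that you yourself flag but do not resolve, and it is worth naming precisely. Because $\frlap$ is nonlocal, the pointwise value $\frlap u_g(x)$ for $x\in B_r$ involves $u_g$ on \emph{all} of $\Rn$, and outside $B_r$ one has $u_g=g$, not a Poisson integral. So the object you would need to show is $s$-harmonic is not $P_r(y,\cdot)$ on $B_r$ alone but its extension by the Dirac mass $\delta_y$ on $\Rn\setminus B_r$; equivalently, one must verify for all $x\in B_r$, $y\in\Rn\setminus\overline{B_r}$ the identity
\begin{equation*}
\mathrm{P.V.}\int_{B_r}\frac{P_r(y,x)-P_r(y,z)}{|x-z|^{n+2s}}\,dz+\int_{\Rn\setminus B_r}\frac{P_r(y,x)}{|x-z|^{n+2s}}\,dz-\frac{1}{|x-y|^{n+2s}}=0 .
\end{equation*}
This is a true statement, but it is the hard computation of the theorem, and nothing in your sketch accomplishes it. In particular your ``more elegant'' alternative via the factorization $P_r(y,x)=A_r(y)\cdot r^{-2s}(r^2-|x|^2)^s\,|y|^n|x-y|^{-n}$ (note also that you wrote $|y|^2$ where the correct factor is $r^2$) does not lead anywhere: the kernel $A_r$ gives the $s$-mean value of $u$ \emph{at the center} of a ball of radius $r$, which says nothing directly about the $x$-dependence of $P_r(y,x)$ at other interior points.

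A smaller circularity: you justify $\int_{\Rn\setminus B_r}P_r(y,x)\,dy=1$ by ``evaluating the formula on $g\equiv1$, for which $u_g\equiv1$ is the obvious solution.'' That presupposes both that $u_g$ solves the problem and that solutions are unique, which is what the theorem is proving. The paper establishes this normalization identity independently (Lemma \ref{lemmaip}, identity \eqref{Ip}) by an explicit hyperspherical change of variables; you need that direct proof before you can use the identity in the boundary-continuity step.

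So: your boundary and uniqueness arguments are fine and essentially the paper's, but step (ii) as written is not a proof, and the suggested shortcut does not close the gap. To complete your plan you would either have to carry out the Kelvin-type computation proving the displayed identity above, or switch to the paper's strategy through the $s$-mean value property and the $\Phi$-convolution identities.
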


\begin{definition} Let $r>0$ be fixed. For any $x, z \in B_r$ and $x\neq z$, the function $G$ is defined by
\begin{equation} G(x,z) := \Phi(x-z) -\int_{\obal} \Phi(z-y) P_r(y,x) \, dy . \label{greendefn}\end{equation}
\end{definition}

From this definition, a formula that is more suitable for applications can be deduced. Indeed, one of the main goals is to prove this simpler, explicit formula for the Green function on the ball, by means of elementary calculus techniques. At this purpose, Theorem \ref{theorem:thm1} establishes a symmetrical expression for~$G$.
\begin{thm*} [\ref{theorem:thm1}]
Let $r>0$ be fixed and let $G$ be the function defined in \eqref{greendefn}. Then if $n \neq 2s$\begin{equation} \label{forgkns}   G(x,z) =  \kappa(n,s) |z-x|^{2s-n}  \int_0^{r_0(x,z)}  \frac{t^{s-1}} {(t+1)^\frac{n}{2}} \, dt ,\end{equation}
where
 	\begin{equation} \displaystyle r_0(x,z) = \frac{(r^2-|x|^2)(r^2-|z|^2)}{r^2|x-z|^2}\label{ro} \end{equation}
and $\kappa(n,s)$ is a constant depending only on $n$ and $s$.\\
For $n=2s$, the following holds
\begin{equation} G(x,z)= \kappa\Big(1,\frac{1}{2}\Big) \log\bigg( \frac{r^2-xz+\sqrt{(r^2-x^2)(r^2-z^2)}}{r|z-x|}\bigg).\label{formn1s12} \end{equation}
\end{thm*}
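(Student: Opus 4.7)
My plan is to evaluate the integral appearing in the definition \eqref{greendefn} directly, via a sequence of elementary changes of variables that reduce it to a one-dimensional radial integral. Substituting the explicit expressions $\Phi(x-z)=a(n,s)|x-z|^{2s-n}$ and $P_r(y,x)=c(n,s)\big((r^2-|x|^2)/(|y|^2-r^2)\big)^{s}|x-y|^{-n}$ (for $n\neq 2s$) turns the task into the evaluation of
\[
J(x,z):=\int_{\obal} \frac{|z-y|^{2s-n}}{(|y|^2-r^2)^s\,|x-y|^n}\,dy,
\]
after which $G(x,z) = a(n,s)\,|x-z|^{2s-n} - a(n,s)\,c(n,s)(r^2-|x|^2)^s\,J(x,z)$. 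By rotational invariance and scaling, I first assume $r=1$ and that $x$, $z$ lie in a fixed two-dimensional coordinate plane through the origin.

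Next, I perform the Kelvin inversion $y=y'/|y'|^2$, an involution sending $\Rn\setminus\overline{B_1}$ onto $B_1\setminus\{0\}$ with Jacobian $|y'|^{-2n}$. The conformal identities $|y|^2-1=(1-|y'|^2)/|y'|^2$ and $|y-w|=|w|\,|y'-w^\ast|/|y'|$ (with $w^\ast=w/|w|^2$, $w\in\{x,z\}$) make all powers of $|y'|$ telescope to zero, yielding
\[
J(x,z) = \frac{1}{|x|^n\,|z|^{n-2s}}\int_{B_1} \frac{|y'-z^\ast|^{2s-n}}{(1-|y'|^2)^s\,|y'-x^\ast|^n}\,dy'.
\]
Passing to spherical coordinates $y'=\rho\omega$ and using the remaining rotational symmetry, the angular integrand depends on $\omega$ only through its inner products with the unit vectors along $x^\ast$ and $z^\ast$. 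A suitable monotone substitution $t=t(\rho)$ mapping $[0,1)$ onto $[0,r_0(x,z)]$ then collapses the radial integrand to $t^{s-1}(t+1)^{-n/2}$, multiplied by the geometric factor $|z-x|^{2s-n}$. Combining with $\Phi(x-z)$ and collecting constants into a single $\kappa(n,s)$ produces \eqref{forgkns}.

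The main obstacle is precisely this last step: choosing the substitution $t(\rho)$ so that the upper limit is exactly $r_0(x,z)$ as in \eqref{ro} and the integrand takes the claimed rational form. The bookkeeping rests on the algebraic identity
\[
(r^2-|x|^2)(r^2-|z|^2) + r^2|x-z|^2 = (r^2 - x\cdot z)^2 + |x|^2|z|^2 - (x\cdot z)^2,
\]
which relates $r_0$ to the geometry of the inverted points $x^\ast,z^\ast$: the Cauchy--Schwarz defect $|x|^2|z|^2-(x\cdot z)^2$ on the right is what the angular integration produces via spherical symmetry, while the $(r^2-x\cdot z)^2$ term pins down the value at $\rho=1$ and hence the upper limit $r_0$. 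The case $n=2s$ (i.e.\ $n=1$, $s=1/2$) is handled analogously using $\Phi(x)=a(1,1/2)\log|x|$; the radial integral evaluates in closed form as $\int_0^{r_0}t^{-1/2}(t+1)^{-1/2}\,dt = 2\operatorname{arcsinh}\sqrt{r_0}$, and the identity above collapses to $(r^2-xz)^2 = (r^2-x^2)(r^2-z^2)+r^2(x-z)^2$ in one dimension (where Cauchy--Schwarz is an equality), so that this $\operatorname{arcsinh}$ rewrites directly as the logarithm in \eqref{formn1s12}.
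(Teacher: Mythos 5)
Your strategy of substituting the explicit kernels, then performing a Kelvin inversion and reducing to a radial integral is the right high-level plan, and it is also the plan the paper follows. However, there is a genuine gap, and it sits exactly where you say the ``main obstacle'' lies. You invert about the \emph{origin}, which maps the exterior integral into a ball integral but leaves the two-center structure intact: after your change of variables the integrand still contains \emph{both} $|y'-x^\ast|^{-n}$ and $|y'-z^\ast|^{2s-n}$. This is the same analytic object you started with (a two-pole Riesz-type integral with a boundary weight), so nothing has been collapsed. When you then pass to spherical coordinates, the angular integral depends jointly on the angle between $y'$ and $x^\ast$ and the angle between $y'$ and $z^\ast$, so it is not a one-parameter radial integral and there is no single substitution $t=t(\rho)$ that reduces it to $t^{s-1}(t+1)^{-n/2}$ with upper limit $r_0$. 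The algebraic identity you quote (which is correct --- it amounts to $r_0+1=(r^2-x\cdot z)^2/(r^2|x-z|^2)$ plus a Cauchy--Schwarz defect) verifies the formula for $r_0$ \emph{after the fact}, but it does not produce the reduction; it is not where the difficulty actually resides.

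The paper's decisive move is different in a subtle but essential way: it centers the inversion at the point $z$ itself (see \eqref{tr} and its consequences \eqref{firsttr}, \eqref{dxtr}, \eqref{sectr}), so that the $\Phi(z-y)$ factor is absorbed entirely by the Jacobian and by the transformation of $|y|^2-r^2$. After this inversion the dangerous integral becomes
\[
A(x,z)= c(n,s)\,|z-x|^{2s-n}\int_{B_r}\frac{(|x^\ast|^2-r^2)^s}{(r^2-|y^\ast|^2)^s\,|x^\ast-y^\ast|^n}\,dy^\ast,
\]
which has only \emph{one} singular center $x^\ast$. This is what makes the hyperspherical angular integral tractable via the identity \eqref{prop1}, and even then the remaining radial piece requires real work: the paper introduces the constant $k(n,s)$ as an auxiliary integral, performs a Fubini swap and further substitutions, and invokes the integral identity in Proposition \ref{proposition:uss} (essentially a Beta/hypergeometric evaluation). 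None of this is ``bookkeeping''; it is the substance of the proof, and in your version it is unaddressed. The same issue affects your $n=2s$ case: the closed-form $\int_0^{r_0}t^{-1/2}(t+1)^{-1/2}\,dt = 2\operatorname{arcsinh}\sqrt{r_0}$ is a correct antiderivative, but you have not shown that the double integral over $B_1$ reduces to that radial integral with upper limit $r_0$. To repair the proposal, replace the inversion about the origin by an inversion about $z$ (or, equivalently, about $x$), and then carry out the angular and radial reductions explicitly.
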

This result is not new (see \cite{conto,stableprocess}), however, the proof we provide uses only calculus techniques, therefore we hope it will be accessible to a wide audience. It makes elementary use of special functions like the Euler-Gamma function, the Beta and the hypergeometric function, that are introduced in the Appendix (see the book \cite{gamma} for details). Moreover, the point inversion transformations and some basic calculus facts, that are also outlined in the Appendix, are used in the course of this proof.

The main property of the Green function is stated in the upcoming Theorem~\ref{theorem:thm2}. The function $G$ is used to build the solution of an equation with a given forcing term in a ball and vanishing Dirichlet data outside the ball, by convolution with the forcing term. While this convolution property in itself may be easily guessed from the superposition effect induced by the linear character of the equation, the main property that we point out is that the convolution kernel is explicitly given by the function $G$.

\begin{thm*} [\ref{theorem:thm2}]
Let $r>0$, $h \in {C^{2s+\eee}(  B_r)  \cap C(\overline B_r)}$ and let
	 \begin{equation*}
		u(x) : =
			\begin{cases}
		\displaystyle \int_{B_r} h(y) G(x,y) \, dy \quad & \text{ if } x\in B_r, \\
		0 \quad \quad & \text{ if } x \in {\obal}.
		\end{cases}
	\end{equation*}
Then $u$ is the unique pointwise continuous  solution of the problem \eqref{PoissoneqD}
	\begin{equation*}
		\begin{cases}
		    \frlap u= h   \qquad &\mbox{ in } {B_r} ,
 		\\  u= 0   \qquad &\mbox{ in } {\obal}.
	\end{cases}
	\end{equation*}
\end{thm*}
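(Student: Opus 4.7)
The plan is to decompose $u$ using the very definition of the Green function and to recognize each piece as a problem we have already solved. Writing
\[
u(x) = \int_{B_r} h(y)\,\Phi(x-y)\,dy - \int_{B_r} h(y)\int_{\obal} \Phi(y-w)\,P_r(w,x)\,dw\,dy,
\]
and (provided the regularity of $h$ lets us apply Fubini) swapping the order of integration in the second piece, we get $u = v - U$, where
\[
v(x) := \int_{B_r} h(y)\,\Phi(x-y)\,dy = (\Phi*\tilde h)(x),
\]
with $\tilde h$ the extension of $h$ by zero to $\Rn$, and
\[
U(x) := \int_{\obal} P_r(w,x)\,v(w)\,dw.
\]
Here the idea is that $v$ plays the role of a particular solution on all of $\Rn$ (because $\Phi$ is the fundamental solution, so $\frlap v = \tilde h$), while $U$ will be built to kill $v$ outside $B_r$ and remain $s$-harmonic inside.

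The key steps, in order, are the following. First, use the properties of the fundamental solution proved earlier in the paper to show that $v \in L^1_s(\Rn)$, that $v$ is continuous on $\Rn$, and that $\frlap v(x) = h(x)$ pointwise for every $x \in B_r$ (this is where the hypothesis $h \in C^{2s+\eee}(B_r)$ enters, giving enough regularity of $v$ near $x$ to apply the pointwise definition of $\frlap$ from Remark \ref{blacaz}). Second, apply Theorem \ref{theorem:DPL} with boundary data $g := v|_{\obal}$ to conclude that the Poisson-extended function $U$ is the unique continuous solution of $\frlap U = 0$ in $B_r$ with $U = v$ in $\obal$; in particular $U$ is $s$-harmonic in $B_r$ and agrees with $v$ outside the ball. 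Third, subtracting, for $x \in B_r$
\[
\frlap u(x) = \frlap v(x) - \frlap U(x) = h(x) - 0 = h(x),
\]
while for $x \in \obal$ we simply have $u(x) = v(x) - U(x) = 0$, matching the Dirichlet condition.

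For continuity up to $\overline{B_r}$, I would check that $u(x) \to 0$ as $x \to \partial B_r$ from inside: since $v$ is continuous on $\Rn$ and the Poisson extension $U$ provided by Theorem \ref{theorem:DPL} matches its boundary data continuously (this is part of the statement of that theorem under $g \in C(\Rn)$), the difference $v - U$ tends to $0$ at the boundary. Uniqueness then follows from the maximum principle for the fractional Laplacian applied to the difference of two candidate solutions, which is $s$-harmonic in $B_r$ with zero exterior data.

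The main obstacle I expect is the second step, specifically the rigorous justification that $v = \Phi*\tilde h$ has the regularity required to (a) lie in $L^1_s(\Rn)$ so that the Poisson integral defining $U$ converges, (b) be continuous on $\overline{B_r}$ so that Theorem \ref{theorem:DPL} applies, and (c) satisfy $\frlap v = h$ pointwise inside $B_r$ despite $\tilde h$ having only $C(\overline{B_r})$ regularity across $\partial B_r$. Handling the jump of $\tilde h$ at $\partial B_r$ carefully, together with the $C^{2s+\eee}$ interior regularity, is the delicate technical point; the rest of the argument is essentially bookkeeping with Fubini and the previously established representation formula.
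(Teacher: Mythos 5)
Your overall decomposition and the order of the key steps match the paper's proof exactly: write $u = v - U$ via the definition of $G$, identify $v = \Phi*\tilde h$ as a particular solution via Theorem \ref{theorem:poissonsolution}, identify $U$ as the Poisson extension of $v|_{\obal}$ via Theorem \ref{theorem:DPL}, subtract, and invoke the maximum principle for uniqueness. The one genuine gap is the choice of extension. You extend $h$ \emph{by zero}, and then (correctly) worry in your last paragraph that the resulting $\tilde h$ is generically discontinuous across $\partial B_r$, so it is not in $C_c^{2s+\eee}(\Rn)$ and Theorem \ref{theorem:poissonsolution} cannot be invoked as stated to conclude $\frlap v = h$ pointwise in $B_r$; you flag this as ``the delicate technical point'' but do not resolve it.

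The resolution, and what the paper actually does, is to observe that the decomposition $u = v - U$ does not depend on the extension at all, so one is free to pick a convenient one. Concretely, for any extension $\tilde h$ of $h$ and any $x\in B_r$,
\[
\tilde h*\Phi(x) - \int_{\obal} P_r(y,x)\,\big(\tilde h*\Phi\big)(y)\,dy
= \int_{\Rn} \tilde h(z)\Big(\Phi(x-z)-\int_{\obal} P_r(y,x)\,\Phi(y-z)\,dy\Big)\,dz,
\]
and for $z\in\obal$ the inner bracket vanishes identically by identity \eqref{Ipu}, so only $\tilde h|_{B_r}=h$ contributes and the right-hand side equals $\int_{B_r} h(z)\,G(x,z)\,dz = u(x)$ regardless of what $\tilde h$ does outside $B_r$. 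The paper therefore chooses $\tilde h\in C_c^{2s+\eee}(\Rn)$ with $B_r\subset\mathrm{supp}\,\tilde h$ and $\tilde h=h$ on $B_r$; with this choice $\tilde h$ has no jump, Theorem \ref{theorem:poissonsolution} gives $v=\tilde h*\Phi\in L^1_s(\Rn)\cap C(\Rn)$ with $\frlap v=\tilde h$ pointwise, and the rest of your argument goes through verbatim. Without this change, making your zero-extension work would require a separate (and not entirely trivial) argument that $\Phi$ is classically $s$-harmonic away from the origin, which the paper never proves and which your proposal does not supply.
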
	
The proof is classical, and makes use of the properties and representation formulas involving the two functions $\Phi$ and $P_r$.

We are also interested in the values of the normalization constants that appear in the definitions of the $s$-mean kernel (and the Poisson kernel) and of the fundamental solution. {We will deal separately with the two cases $n\neq2s$ and $n=2s$. We have the following definition}:

\begin{definition}
\label{definition:ctn} The constant $a(n,s)$ introduced in definition \eqref{fundsolution} is
\begin{align}
        a(n,s) : &= \displaystyle { \frac{ \Gamma (\frac{n}{2}-s)} { 2^{2s}\pi ^ {\frac{n}{2} }  \Gamma(s)}} &  \text{ for } &n\neq 2s \label{ctans1},\\
	a\Big(1,\frac{1}{2}\Big) :&= { -\frac{1}{\pi}} & \text{ for } &n=2s. \label{ctans3}
	\end{align}

	The constant $c(n,s)$ introduced in definition \eqref{smeandefn} is
\begin{equation}
	c(n,s) :=\frac {\Gamma (\frac{n}{2}) \sin \pi s }  {\pi^{\frac{n}{2}+1} }.\label{ctcns}
\end{equation}

\end{definition}

These constants are used for normalization purposes, and we explicitly clarify how their values arise. However, these values are only needed to compute the constant $\kappa(n,s)$ introduced in Theorem \ref{theorem:thm1}, and have no role for the rest of our discussion. The value of $\kappa(n,s)$ is given in terms of the Euler-Gamma function as:

\begin{thm*}[\ref{theorem:kns}] The constant $\kappa(n,s)$ introduced in identity \eqref{forgkns} is
	\begin{equation*} \begin{aligned}
	 \kappa(n,s) &= \displaystyle {\frac{\Gamma(\frac{n}{2}) } {2^{2s}\pi^{ \frac{n}2}   \Gamma^2(s) } }& \text{ for } &n \neq 2s, \\
		\kappa\Big(1,\frac{1}{2}\Big) &={\frac{1}{\pi}}
		  & \text{ for } &n=2s.
		  \end{aligned}
		  \end{equation*}
\end{thm*}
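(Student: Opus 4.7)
The plan is to identify $\kappa(n,s)$ by comparing the two available representations of $G(x,z)$ — the definition \eqref{greendefn} and the explicit formula of Theorem~\ref{theorem:thm1} — in the limit $z \to x$, where the singular behavior of the Green function is universal and captured by the fundamental solution $\Phi$.

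From \eqref{greendefn}, as $z \to x \in B_r$ the integral $\int_{\Rn \setminus B_r} \Phi(z-y) P_r(y,x)\,dy$ remains continuous in $z$: its integrand is uniformly bounded in a neighborhood of $z=x$ because $|z-y| \geq r - |x| - |z-x| > 0$ on the domain of integration, and it is integrable thanks to the decay of $P_r(\cdot, x)$ at infinity. Hence the singular (or non-smooth) part of $G(x,z)$ at $z=x$ is dictated entirely by $\Phi(x-z)$. Explicitly, for $n > 2s$ this yields $G(x,z) = a(n,s)\,|z-x|^{2s-n} + O(1)$, while for $n = 2s$ (thus $n=1$, $s=1/2$) it yields $G(x,z) = -\pi^{-1}\log|z-x| + O(1)$, using the values of $a(n,s)$ from Definition~\ref{definition:ctn}.

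On the right-hand side of \eqref{forgkns}, formula \eqref{ro} gives $r_0(x,z) \sim (r^2-|x|^2)^2/(r^2|z-x|^2) \to \infty$ as $z \to x$. For $n > 2s$, the integral $\int_0^{r_0} t^{s-1}(1+t)^{-n/2}\,dt$ converges monotonically to the Beta integral $B(s,\, n/2 - s) = \Gamma(s)\Gamma(n/2-s)/\Gamma(n/2)$, so matching coefficients of $|z-x|^{2s-n}$ with the expression from the definition gives
\[
	\kappa(n,s) \, \frac{\Gamma(s)\Gamma(n/2-s)}{\Gamma(n/2)} \;=\; a(n,s) \;=\; \frac{\Gamma(n/2-s)}{2^{2s}\pi^{n/2}\Gamma(s)},
\]
which solves to $\kappa(n,s) = \Gamma(n/2)/(2^{2s}\pi^{n/2}\Gamma^2(s))$, as claimed. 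For $n = 2s$, the argument of the logarithm in \eqref{formn1s12} behaves like $2(r^2-x^2)/(r|z-x|)$ as $z \to x$, producing a $-\kappa(1,1/2)\log|z-x|$ singularity on the right; matching with the $-\pi^{-1}\log|z-x|$ term from $\Phi$ forces $\kappa(1,1/2) = 1/\pi$.

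The main obstacle is the remaining regime $n < 2s$, which occurs only when $n=1$ and $s > 1/2$. Here $\Phi(x-z)$ is H\"older continuous (with exponent $2s-1 > 0$) at $z=x$, the prefactor $|z-x|^{2s-n}$ in \eqref{forgkns} vanishes while the incomplete Beta-type integral diverges, and both sides of \eqref{forgkns} tend to finite limits — so leading-order matching at the diagonal does not isolate the constant. The cleanest resolution is to bypass the singularity comparison in this case and simply track the value of $\kappa(n,s)$ through the derivation of \eqref{forgkns} in the proof of Theorem~\ref{theorem:thm1}: the constant arises from Beta and hypergeometric evaluations whose explicit Gamma-function values, combined with the normalizations $a(n,s)$ and $c(n,s)$ of Definition~\ref{definition:ctn}, produce the stated expression uniformly for all admissible $s$.
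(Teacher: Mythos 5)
Your singularity-matching approach for the cases $n>2s$ and $n=2s$ is correct and genuinely different from what the paper does. The paper tracks the normalization constants forward through the derivation of Theorem~\ref{theorem:thm1}: for $n>2s$ it evaluates $k(n,s)$ from definition~\eqref{intkns} as the reciprocal of a Beta integral and then multiplies by $a(n,s)$ via \eqref{kappa}; for $n<2s$ it expands the explicit expressions \eqref{k1s}, \eqref{kk1s}, \eqref{ctans1} and simplifies with Gamma identities; for $n=2s$ it simply reads off the constant from the proof of Theorem~\ref{theorem:thm1}. Your alternative is an a posteriori check: it does not re-open the derivation of \eqref{forgkns} but instead compares the diagonal behavior of the two known representations of $G$, using that $\int_{\Rn\setminus B_r}\Phi(z-y)P_r(y,x)\,dy$ is continuous in $z$ near $x$ and that $\int_0^{r_0}t^{s-1}(1+t)^{-n/2}\,dt\to B(s,n/2-s)$. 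That is a cleaner and more conceptual argument for those cases, and it makes transparent why the constant must equal $a(n,s)/B(s,n/2-s)$.

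However, there is a genuine gap for $n<2s$ (i.e.\ $n=1$, $s>1/2$). You correctly note that the matching breaks down there because the Beta integral $\int_0^\infty t^{s-1}(1+t)^{-1/2}\,dt$ diverges while the prefactor $|z-x|^{2s-1}$ vanishes, so the leading diagonal values are finite on both sides. But your resolution — ``simply track the value of $\kappa(n,s)$ through the derivation of \eqref{forgkns}'' — is not a proof; it is a description of what remains to be done, i.e.\ the paper's own calculation. Two honest ways to close the gap: (i) actually carry out the computation the paper does, namely combine $\kappa(1,s)=-a(1,s)k(1,s)$ with the explicit formulas \eqref{k1s} and \eqref{ctans1} and the identities \eqref{gam3}, \eqref{gam4}, \eqref{gamxx1}; or (ii) refine the matching argument by isolating the $|z-x|^{2s-1}$-H\"older-singular part of both representations. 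For (ii), one observes that $\int_0^{r_0}t^{s-1}(1+t)^{-1/2}\,dt = \tfrac{r_0^{s-1/2}}{s-1/2}+C+o(1)$ with $C=\Gamma(s)\Gamma(1/2-s)/\Gamma(1/2)$ by analytic continuation of the Beta function; after multiplying by $\kappa(1,s)|z-x|^{2s-1}$ the first term becomes smooth in $z$, leaving $\kappa(1,s)C|z-x|^{2s-1}$ as the H\"older-singular part, which must match $a(1,s)|z-x|^{2s-1}$ from $\Phi$. Either route gives $\kappa(1,s)=1/(2^{2s}\Gamma^2(s))=\Gamma(1/2)/(2^{2s}\pi^{1/2}\Gamma^2(s))$, but as written your proposal establishes only two of the three cases.
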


One interesting thing that we want to point out here is related to the two constants $C(n,s)$ and $c(n,s)$. The constant $C(n,s)$ is given in \cite{galattica} in the definition of the fractional Laplacian, and we defined it here in \eqref{GCNS}. The constant $c(n,s)$ is introduced in \cite{Landkof} in the definition of the $s$-mean kernel and the Poisson-kernel, and is here given in \eqref{ctcns}. These two constants are used for different normalization purposes. We observe that they have similar asymptotic properties, however they are not equal. {In the following proposition we explicitly compute the value of $C(n,s)$ defined in \eqref{GCNS}.}

\begin{thm*}[\ref{thm:Cc}]
The constant $C(n,s)$ introduced in \eqref{GCNS} is given by
	\begin{equation} \label{cnscomputed} C(n,s) = {\frac{2^{2s} s \Gamma\left(\frac{n}2 +s\right)} {\pi^{\frac{n}2} \Gamma(1-s)}}.\end{equation}

\end{thm*}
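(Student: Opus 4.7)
The plan is to compute $I(n,s) := \int_{\Rn} \frac{1-\cos \eta_1}{|\eta|^{n+2s}}\,d\eta$ explicitly and then read off $C(n,s) = I(n,s)^{-1}$. The integrand is nonnegative and integrable: near the origin $1-\cos \eta_1 = O(\eta_1^2)$ yields an $O(|\eta|^{2-n-2s})$ bound with $2-2s>0$, while at infinity the $|\eta|^{-n-2s}$ decay is enough since $2s>0$. Fubini therefore applies freely in what follows.

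The first step is the usual dimensional reduction. I decompose $\eta = (\eta_1,\eta')$ with $\eta' \in \R^{n-1}$, pass to polar coordinates in $\eta'$, and rescale $|\eta'| = |\eta_1|\,t$. This factors out $|\eta_1|^{-1-2s}$ and leaves a $t$-integral that becomes a Beta function via $u = t^2/(1+t^2)$, namely $\tfrac12 B(\tfrac{n-1}{2},\tfrac{1+2s}{2})$. Inserting $|S^{n-2}| = 2\pi^{(n-1)/2}/\Gamma(\tfrac{n-1}{2})$ and expanding $B$ in $\Gamma$'s, the $\Gamma(\tfrac{n-1}{2})$ factors cancel and I am left with
\[ I(n,s) \;=\; \frac{\pi^{(n-1)/2}\,\Gamma(s+\tfrac12)}{\Gamma(\tfrac{n}{2}+s)}\,J(s), \qquad J(s) \;:=\; \int_{\R} \frac{1-\cos t}{|t|^{1+2s}}\,dt. \]
(For $n=1$ the reduction is vacuous and the prefactor degenerates to $1$, so the formula is uniform in $n\geq 1$.)

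The second step is the classical one-dimensional identity $J(s) = \pi/(\Gamma(1+2s)\sin(\pi s))$. I would derive it by integration by parts twice on $[0,\infty)$ (the boundary terms at $0$ and $\infty$ vanish for $s\in(0,1)$, with the exceptional $s=\tfrac12$ handled by continuity) to reduce $J(s)$ to the cosine Mellin transform $\int_0^\infty t^{-2s}\cos t\,dt = \Gamma(1-2s)\sin(\pi s)$, followed by the identity $\Gamma(2s)\Gamma(1-2s)\sin(2\pi s)=\pi$.

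The third step is pure bookkeeping. The Legendre duplication formula $\Gamma(2s) = 2^{2s-1}\Gamma(s)\Gamma(s+\tfrac12)/\sqrt\pi$ makes the $\Gamma(s+\tfrac12)$ from $J(s)$ cancel against the one left over from the Beta integral, and the reflection formula $\Gamma(s)\Gamma(1-s) = \pi/\sin(\pi s)$ absorbs $\sin(\pi s)$. Collecting $\pi^{(n-1)/2}\cdot\sqrt\pi = \pi^{n/2}$ gives $I(n,s) = \pi^{n/2}\Gamma(1-s)/(s\,2^{2s}\,\Gamma(\tfrac{n}{2}+s))$, whose reciprocal is the claim. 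No individual step is deep; the main hazard is keeping the Gamma-function manipulations straight at the end, which I would sanity-check against the behaviour of $C(n,s)$ in the limits $s\to 0^+$ and $s\to 1^-$.
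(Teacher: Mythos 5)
Your proposal is correct, but it takes a genuinely different route from the paper. The paper never touches the defining integral $\int_{\Rn}\frac{1-\cos\eta_1}{|\eta|^{n+2s}}\,d\eta$ directly; instead it leverages the machinery it has already built. Concretely, the paper quotes Lemma \ref{dydares} (the Dyda-type identity $(-\Delta)^s(1-|x|^2)_+^s = C(n,s)\frac{\omega_n}{2}B(s,1-s)$ in $B_1$), feeds that constant forcing term through the Green-function representation of Theorem \ref{theorem:thm2}, evaluates the resulting identity at $x=0$ where the left side is $u(0)=1$, and reduces the $\int_{B_1}|y|^{2s-n}\big(\int_0^{(1-|y|^2)/|y|^2}\cdots\big)dy$ to $\frac{\omega_n}{2s}B(s,\tfrac{n}{2})$ by Fubini; plugging in the value of $\kappa(n,s)$ from Theorem \ref{theorem:kns} then solves for $C(n,s)$. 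So the paper's proof is really a consistency check tying the Fourier normalization constant $C(n,s)$ to the Green-function normalization $\kappa(n,s)$, and it is only available after the whole apparatus of Sections 2--3 is in place. Your approach is self-contained and more elementary: dimensional reduction in $\eta'$, a Beta integral, the one-dimensional Mellin transform of $1-\cos t$, then Legendre duplication and Euler reflection. It buys independence from the Green-function theory, at the cost of the global sanity check the paper gets for free. One technical nit: your "integration by parts twice" to reach $\int_0^\infty t^{-2s}\cos t\,dt$ produces, in the second step, the boundary term $\sin t \cdot t^{1-2s}/(1-2s)$ at $\infty$, which does not vanish when $s<\tfrac12$. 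Stop after one integration by parts and use the sine Mellin transform $\int_0^\infty t^{-2s}\sin t\,dt = \Gamma(1-2s)\cos(\pi s)$ instead (valid for all $s\in(0,1)$ as an improper integral, with the removable singularity at $s=\tfrac12$), or regularize with $e^{-\varepsilon t}$ and let $\varepsilon\to 0^+$; either route recovers $J(s)=\pi/(\Gamma(1+2s)\sin(\pi s))$ and the rest of your Gamma-function bookkeeping is correct.
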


This paper is structured as follows: in the Preliminaries (Section \ref{preliminaries}) we introduce some kernels related to the fractional Laplacian. In Subsection \ref{smean} we define the $s$-mean value property by means of the $s$-mean kernel and prove that if a function is $s$-harmonic, then it has the $s$-mean value property. Subsection \ref{fundsol} deals with the study of the function $\Phi$ as the fundamental solution of the fractional Laplacian.
The fractional Poisson kernel is introduced in Subsection \ref{thepoissonkernel}, and the representation formula for equation \eqref{LaplaceeqD} is obtained. Section \ref{green} focuses on the Green function, presenting two main theorems: Theorem \ref{theorem:thm1} gives a more basic formula of the function $G$ for the ball and is treated in Subsection \ref{thm1}; Theorem \ref{theorem:thm2}, that illustrates how the solution to equation \eqref{PoissoneqD} is built by means of the function $G$, is dealt with in Subsection \ref{thm2}.  The computation of the normalization constants introduced along this notes is done in Subsection \ref{constants}.
The Appendix \ref{appendix} introduces three special functions (Gamma, Beta and hypergeometric), the point inversion transformations and some calculus identities that we use throughout this paper.

\section[Preliminaries]{Preliminaries} \label{preliminaries}
In this section, we deal with the $s$-mean kernel, the fundamental solution and the Poisson kernel.

Let $s\in (0,1)$.

\subsection{The $s$-mean value property}\label{smean}

In this subsection, we define the $s$-mean value property of the function $u$, namely, an average property defined by convolution of $u$ with the $s$-mean kernel.

\begin{definition}[$s$-mean value property]
Let $x\in \Rn$. We say that $u $ belonging to $ L_s^1(\Rn)$ and continuous in a neighborhood of $x$ has the $s$-mean value property at $ x$ if, for any $r>0$ arbitrarily small,
	\begin{equation}  u(x)= A_r* u (x) .\label{smvp}\end{equation}
We say that $u$ has the $s$-mean value property in $\Omega \subseteq {\Rn}$ if for any $r>0$ arbitrarily small, identity \eqref{smvp} is satisfied at any point $x \in \Omega$.
\end{definition}

The kernel $A_r$ defined in \eqref{smeandefn} is used to state the $s$-mean value property, which makes it reasonable to say that $A_r$ plays the role of the $s$-mean kernel. The main result that we state here is that if a function has the $s$-mean value property, then it is $s$-harmonic  (i.e $u$ satisfies the classical relation $\frlap u =0$).

\begin{theorem}
\label{theorem:arm}
Let $u \in L_s^1(\Rn)$ be $C^{2s+\eee}$ in a neighborhood of $x\in \Rn$.
If $u$ has the $s$-mean value property at $x$, then $u$ is $s$-harmonic at $x$.
\end{theorem}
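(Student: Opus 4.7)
The plan is to expand $u(x)-(A_r*u)(x)$ in powers of $r$ as $r\to 0^+$ and show that the leading coefficient is a nonzero multiple of $\frlap u(x)$; since the hypothesis forces this expression to vanish for every small $r$, it will follow that $\frlap u(x)=0$.

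First I would rewrite the mean value identity in a symmetric form. Using that $A_r$ is even and integrates to $1$ over $\Rn$ (a direct polar-coordinates verification with the Beta integral $B(s,1-s)=\pi/\sin \pi s$ and the explicit value of $c(n,s)$), the equation $u(x)=(A_r*u)(x)$ becomes
$$0=u(x)-(A_r*u)(x)=\frac{c(n,s)\,r^{2s}}{2}\int_{|y|>r}\frac{2u(x)-u(x+y)-u(x-y)}{(|y|^2-r^2)^s\,|y|^n}\,dy.$$
Writing $(|y|^2-r^2)^s|y|^n=|y|^{n+2s}(1-r^2/|y|^2)^s$ suggests the formal limit
$$\int_{|y|>r}\frac{2u(x)-u(x+y)-u(x-y)}{(|y|^2-r^2)^s|y|^n}\,dy\xrightarrow[r\to 0^+]{}\int_{\Rn}\frac{2u(x)-u(x+y)-u(x-y)}{|y|^{n+2s}}\,dy=\frac{2}{C(n,s)}\,\frlap u(x),$$
the last identity being the alternative representation \eqref{frlap2def}. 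Granting the limit, dividing by $r^{2s}$ and passing $r\to 0^+$ yield $\frlap u(x)=0$.

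The main obstacle is justifying this passage to the limit, since the integrand has two delicate regions. Near the boundary $|y|=r$ the factor $(|y|^2-r^2)^{-s}$ is singular; however, the regularity $u\in C^{2s+\eee}$ near $x$ gives $|2u(x)-u(x+y)-u(x-y)|\leq C|y|^{2s+\eee}$, and after the substitution $|y|^2=r^2(1+v)$ a direct computation shows that the contribution of the annulus $r<|y|<\eta$ is $O(\eta^\eee)$ uniformly in $r$, hence negligible once $\eta$ is small. At infinity the factor $(|y|^2-r^2)^{-s}|y|^{-n}$ is uniformly comparable to $|y|^{-n-2s}$ for $|y|>2r$, so the hypothesis $u\in L_s^1(\Rn)$ controls the tail uniformly in $r$ and makes it small for $R$ large. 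On the intermediate annulus $\eta\leq|y|\leq R$ the factor $(1-r^2/|y|^2)^{-s}$ converges to $1$ uniformly, so dominated convergence applies directly. Assembling the three pieces produces the expansion
$$u(x)-(A_r*u)(x)=\frac{c(n,s)}{C(n,s)}\,r^{2s}\,\frlap u(x)+o(r^{2s})\qquad\text{as }r\to 0^+,$$
and the hypothesis that the left-hand side vanishes for all small $r$ forces $\frlap u(x)=0$.
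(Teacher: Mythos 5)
Your proposal is correct and follows essentially the same route as the paper: both rewrite the mean value identity as the vanishing of an integral against the kernel $(|y|^2-r^2)^{-s}|y|^{-n}$, then show this integral converges as $r\to0^+$ to the one against $|y|^{-n-2s}$, which (up to the constant $C(n,s)$) is $\frlap u(x)$. The only organizational differences are cosmetic: you symmetrize the kernel upfront, using the evenness of $A_r$ to produce the second difference $2u(x)-u(x+y)-u(x-y)$, which removes the linear term automatically; the paper instead keeps $u(x)-u(x-y)$ and cancels the gradient term by parity inside the intermediate piece. You split the domain into three regions (near boundary, intermediate, tail) where the paper uses two ($|y|>R$ and $r<|y|<R$), but the estimates are the same: the $C^{2s+\eee}$ Hölder bound $|2u(x)-u(x\pm y)-u(x\mp y)|\lesssim|y|^{2s+\eee}$ absorbs the $(|y|^2-r^2)^{-s}$ singularity near $|y|=r$, the comparison $(|y|^2-r^2)^{-s}|y|^{-n}\leq c\,|y|^{-n-2s}$ for $|y|\gtrsim r$ plus $u\in L_s^1$ handles the tail, and dominated convergence governs the fixed intermediate annulus.
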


\begin{proof}[Proof]
The function $u$ has the $s$-mean value property for any $r>0$ arbitrarily small, namely
	 \begin{equation*}
           u(x)=  A_r* u (x) =  \int_{  \obal} A_r(y)u(x-y) \, dy.
	\end{equation*}
Using identity \eqref{Ir} we obtain that
	\begin{equation*}
		\begin{split}
	0= u(x)  - \int_{\obal}   A_r(y)  u(x-y) \,  dy  = c(n,s) r^{2s}  \int_{\obal}   \frac{u(x) -u(x-y)}{{(|y|^2-r^2)}^s|y|^n}  \, dy,
		\end{split}
	\end{equation*}
thus, since $r>0$
 	\begin{equation}  \int_{\obal}  \frac {u(x)-u(x-y)  }{(|y|^2-r^2)^s |y|^n}\,  dy =0.\label{zeroeq} \end{equation}
 Hence, in order to obtain $\frlap u(x)=0$ we prove that
 	\begin{equation} \lim_{r \to 0} \int_{\Rn\setminus B_r} \frac{u(x)-u(x-y)}{|y|^{n+2s}} \, dy = \lim_{r \to 0} \int_{\obal}  \frac {u(x)-u(x-y)  }{(|y|^2-r^2)^s |y|^n}\,  dy \label{clsmfrl}. \end{equation}
Let $R>r\sqrt{2}$. We write the integral in \eqref{zeroeq} as
	\begin{equation}\label{eq1112}
		\begin{split}
		 \int_{\obal}   &\frac{u(x)-u(x-y)}{(|y|^2-r^2)^s |y|^n}\, dy \\ = \; &\int_{{\Rn}\setminus B_{R}}  \frac { u(x)-u(x-y) }{(|y|^2-r^2)^s |y|^n}\, dy
	 + \int_{B_{R}\setminus B_r} \frac { u(x)-u(x-y) }{(|y|^2-r^2)^s |y|^n}\, dy \\
	=\;  &I_1(r,R)+I_2(r,R).
		\end{split}
	\end{equation}
In $I_1(r,R)$ we see that $ \frac{|y|^2 }{|y|^2-r^2} <2$ and obtain that
	\[ \frac{|u(x)-u(x-y)|} {(|y|^2-r^2)^s |y|^n}   \leq 2^{s}\frac{ |u(x)-u(x-y) |} {|y|^{n+2s}}  \in L^1(\Rn \setminus B_R, \,dy),\]
	as  $u\in L_s^1(\Rn)$. We can use the dominated convergence theorem, send $r \to 0$  and conclude that \begin{equation}\label{eq1111} \lim_{r\to 0} I_1(r,R)=\int_{\Rn\setminus B_{R}} \frac{u(x)-u(x-y)}{|y|^{n+2s}} \, dy .\end{equation}
Now, for $r<|y|<R$ and $u\in C^{2s+\eee}$ (for $s<1/2$) in a neighborhood of $x$  we have the bound
	\[ \begin{split}
		\big| u(x)-u(x-y) \big|\leq c |y|^{2s+\eee},  \end{split}\]
while for $s\geq1/2$ and $u \in C^{1,2s+\eee-1}$ we use that
	\[\begin{split}
		 |u(x) -u(x-y) - y \cdot \nabla u(x)|\; =&\; \Big| \int_0^1 y \big(\nabla u(x-ty) -\nabla u(x) \big) \, dt\Big|\\
		 	 \leq &\; |y| \int_0^1 \Big| \nabla u(x-ty) -\nabla u(x) \Big| \,  \, dt \\
		 	 \leq &\;c(s,\eee) |y|^{2s+\eee}. \end{split} \]
Notice that $ \frac{ y \cdot \nabla u(x)}{(|y|^2-r^2)^s |y|^n} $ and $\frac{ y \cdot \nabla u(x)}{|y|^{2s+n}} $ are even functions, hence they vanish when integrated on the symmetrical domain $B_{R} \setminus B_r$. Therefore, by setting
	\begin{equation}\label{eq1113}  J(r,R):=I_2(r,R) - \int_{B_R \setminus B_r} \frac{u(x)-u(x-y) } {|y|^{2s+n}}  \,dy\end{equation}
we have that
	\[ \begin{split}    J(r,R) = \int_{B_R \setminus B_r} \Bigg( \frac{u(x)\!-\!u(x\!-\!y)\!-\!y \cdot \nabla u(x)}{(|y|^2\!-\!r^2)^s |y|^n}  -\frac{u(x)\!-\!u(x\!-\!y) \!-\!y \cdot \nabla u(x)} {|y|^{2s+n}} \Bigg)\,dy
	\end{split}\]
and by passing to polar coordinates and afterwards making the change of variables $\rho =rt$ we get
\[ \begin{split}
		|J(r,R)| \leq &\; c (s,\eee) \int_{B_R \setminus B_r} |y|^{2s+\eee}  \Big((|y|^2-r^2)^{-s} |y|^{-n} - |y|^{-n-2s}\Big)\, dy \\
				= &\; {c}(n,s,\eee) \int_r^R \rho ^{\eee-1} \bigg( \frac{\rho^{2s}}{(\rho^2-r^2)^{s}} -1\bigg) \, d\rho \\
				< 	&\; {c}(n,s,\eee) r^{\eee} \int_1^{\frac{R}{r}} t^{\eee-1}  \bigg( \frac{t^{s}}{(t-1)^s} -1\bigg) \, dt	
\end{split}\]
since $t/(t+1)>1$. Now for $ t \in (1, \sqrt 2)$ we have that
	\[ \begin{split}   \int_1^{\sqrt 2} t^{\eee-1} \bigg( \frac{t^s}{(t-1)^s} -1\bigg) \, dt  \leq c(s) \int_1^{\sqrt 2} \bigg( (t-1)^{-s} -t^{-s}\bigg) \, dt = \tilde c(s).
	\end{split} \]
On the other hand, for $t\geq\sqrt 2$
\[\Big(1- \frac{1}{t}\Big)^{-s} - 1 \leq \frac{s}{t} \Big(1-\frac{1}{\sqrt 2} \Big)^{-s-1} \] and we have that
	 \[ \begin{split}   \lim_{r \to 0} \int_{\sqrt 2}^{\frac{R}{r}} t^{\eee-1} \bigg( \frac{t^s}{(t-1)^s} -1\bigg) \, dt   \leq &\;  \int_{\sqrt 2}^{\infty}  t^{\eee-1}  \Bigg( \Big( 1- \frac{1}{t}\Big)^{-s} -1 \Bigg) \, dt\\
\leq &\;c(s) \int_{\sqrt 2} ^{\infty} t^{\eee -2 }\, dt = \bar c(s,\eee).
	\end{split} \]
Thus by sending $r \to 0 $ we obtain that
	\[ \begin{split}   \lim_{ r \to 0} J(r,R) =0
	\end{split} \]
and therefore in \eqref{eq1113} \[ \lim_{r \to 0}I_2(r,R) =  \lim_{r \to 0} \int_{B_R \setminus B_r} \frac{u(x)-u(x-y) } {|y|^{2s+n}}  \,dy.\]
Using this and \eqref{eq1111} and passing to the limit in \eqref{eq1112}, claim \eqref{clsmfrl} follows and hence  the conclusion that $\frlap u(x)=0$.
\end{proof}

\subsection{The fundamental solution}\label{fundsol}

We claim that the function $\Phi$ plays the role of the fundamental solution of the fractional Laplacian, namely the fractional Laplacian of $\Phi$ is equal in the distributional sense to the Dirac Delta function evaluated at zero. The following theorem provides the motivation for this claim.

\begin{theorem}
\label{theorem:thm3} In the distributional sense (given by definition \eqref{disf1})
	 \[\frlap \Phi =\delta_0 .\]
\end{theorem}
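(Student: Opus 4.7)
The plan is to argue via the Fourier transform, exploiting that $(-\Delta)^s$ is a Fourier multiplier by $(2\pi|\xi|)^{2s}$, as recorded in \eqref{frlaphdef}, and that $\Phi$ (when $n\neq 2s$) is a constant multiple of the Riesz kernel whose distributional Fourier transform is a multiple of $|\xi|^{-2s}$. If everything lines up with the normalization $a(n,s)$ chosen in \eqref{ctans1}, the Fourier transform of $(-\Delta)^s\Phi$ will be identically $1$, which is the transform of $\delta_0$.

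Before computing, I would check that $\Phi\in L^1_s(\mathbb{R}^n)$: near the origin $|x|^{2s-n}$ is locally integrable since $2s-n>-n$, and at infinity $|x|^{2s-n}/(1+|x|^{n+2s})\sim |x|^{-2n}$ is integrable (the log case is analogous). By \eqref{db1} this places $\Phi\in\mathcal{S}_s'(\mathbb{R}^n)$, so the pairing in \eqref{disf1} makes sense. For any $\varphi\in\mathcal{S}(\mathbb{R}^n)$, I would then write
\[
\langle \Phi,(-\Delta)^s\varphi\rangle_s=\int_{\mathbb{R}^n}\Phi(x)\,\mathcal{F}^{-1}\bigl((2\pi|\xi|)^{2s}\widehat\varphi(\xi)\bigr)(x)\,dx,
\]
and use the distributional identity \eqref{dists1} together with the radial symmetry of $\Phi$ to push the Fourier transform onto $\Phi$, obtaining
\[
\langle \Phi,(-\Delta)^s\varphi\rangle_s=\int_{\mathbb{R}^n}\widehat{\Phi}(\xi)\,(2\pi|\xi|)^{2s}\widehat\varphi(\xi)\,d\xi.
\]

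For the generic case $n\neq 2s$, the heart of the matter is the Riesz-potential identity, which in the chosen normalization reads
\[
\widehat{\Phi}(\xi)=(2\pi|\xi|)^{-2s}\quad\text{in }\mathcal{S}'(\mathbb{R}^n).
\]
I would establish this either by a direct radial computation using polar coordinates and the Gamma function together with the duplication formula $\Gamma(2s)=2^{2s-1}\pi^{-1/2}\Gamma(s)\Gamma(s+1/2)$ and the reflection formula $\Gamma(s)\Gamma(1-s)=\pi/\sin(\pi s)$ applied to $a(n,s)$, or by regularizing the homogeneous distribution $|x|^{2s-n}$ via Gaussian convolution and passing to the limit. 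Plugging the identity back in kills the $(2\pi|\xi|)^{\pm 2s}$ factors, leaving $\int\widehat\varphi(\xi)\,d\xi=\varphi(0)$ by Fourier inversion, which is exactly the claim.

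The case $n=2s$, i.e.\ $n=1$, $s=1/2$, is the main obstacle since $\Phi=-\frac{1}{\pi}\log|x|$ is not homogeneous and the Riesz identity above degenerates. My plan is to handle it by analytic continuation in $s$: differentiating the identity $\widehat{|x|^{2s-1}}=c(s)|\xi|^{-2s}$ at $s=1/2$ (where $c(s)$ has a zero/pole structure encoded in $\Gamma$) recovers the known distributional Fourier transform of $\log|x|$ on $\mathbb{R}$, namely a multiple of the finite-part distribution $|\xi|^{-1}$ (up to a delta contribution that annihilates $\varphi$ after multiplication by $|\xi|$). Alternatively one can verify directly, using the representation \eqref{frlap2def}, that $(-\Delta)^{1/2}\log|x|=-\pi\,\delta_0$ distributionally by testing against $\varphi\in\mathcal{S}(\mathbb{R})$ and bringing the $s$-derivative onto $\log$. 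Either route, combined with $a(1,1/2)=-1/\pi$, yields $(-\Delta)^{1/2}\Phi=\delta_0$. The only real technical work is the careful Gamma-function bookkeeping for the normalization and, in the critical dimension, the slightly delicate Fourier transform of $\log|x|$.
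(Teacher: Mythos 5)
Your overall strategy — test against $\varphi$, use the Fourier multiplier representation \eqref{frlaphdef}, and reduce the claim to the statement that $\widehat{\Phi}$ is $(2\pi|\xi|)^{-2s}$ — is exactly the skeleton of the paper's argument. The paper packages this as Proposition \ref{proposition:ansss}, whose proof, for $n>2s$, is precisely your second suggested method: Gaussian subordination (write $\Gamma(\tfrac{n}{2}-s)\,|x|^{2s-n}$ as an integral of Gaussians in a scale parameter $\delta$ and apply Parseval in $\delta$ by $\delta$). So for the generic case your plan and the paper's proof essentially coincide.

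There are two genuine gaps in the proposal, both concentrated in the regime $n\le 2s$. First, your headline identity $\widehat{\Phi}=(2\pi|\xi|)^{-2s}$ \emph{as an element of $\mathcal{S}'(\mathbb{R}^n)$} is not literally true when $n<2s$: on $\mathbb{R}$ with $s>1/2$ the function $|\xi|^{-2s}$ is not locally integrable, so the right-hand side is not a distribution without a Hadamard-type regularization, and pushing the Fourier transform onto $\Phi$ requires justification. The paper avoids this by only asserting the weaker statement
\[
\int_{\mathbb{R}^n}\Phi(x)\,\wck f(x)\,dx=\int_{\mathbb{R}^n}(2\pi|x|)^{-2s}f(x)\,dx
\]
for test functions $f$ constrained to vanish at the origin like $|x|^{2s}$ and to decay like $|x|^{-1}$ at infinity (conditions \eqref{condffff}); the singular factor $|x|^{-2s}$ is tamed exactly by the vanishing of $f$. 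You would need an analogous restriction, and you would then have to \emph{check} that $f(\xi)=(2\pi|\xi|)^{2s}\widehat\varphi(\xi)$ satisfies it — this verification (including bounds on $f'$) is a nontrivial step in the paper's proof of Theorem \ref{theorem:thm3} and is absent from your sketch. Second, for the critical case $n=2s$ your proposal to differentiate the Riesz identity in $s$ or to invoke the known transform of $\log|x|$ is a genuinely different route from the paper's: the paper does a direct real-variable calculation, reducing to the Dirichlet integral $\int_0^\infty \frac{\sin t}{t}\,dt=\pi/2$ after integration by parts, and never needs analytic continuation or any prior knowledge of the finite-part transform of $|\xi|^{-1}$. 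Your route is plausible but would require its own justification of the continuity/differentiability in $s$ and of the absence of spurious delta contributions, which you acknowledge only informally; as written it is a plan, not a proof, precisely at the point that is the main technical obstacle.

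Finally, a small but nonvacuous omission: even for $n>2s$ the direct polar-coordinate computation you list as option (a) does not converge as an integral — the Gaussian regularization (or some other regularization) is not an optional alternative but is how one makes sense of the formula in the first place. The paper's choice to state and prove Proposition \ref{proposition:ansss} in the weak form, for all three regimes, is what makes the whole argument airtight.
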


The computation of the Fourier transform of the fundamental solution is required in order to prove Theorem \ref{theorem:thm3}.
\begin{proposition}
\label{proposition:ansss}
a) For $n>2s$, let $f \in L^1(\Rn)\cap C(\Rn)$ with $\wck f \in \Sa_s(\Rn)$,	\\
b) for $n\leq 2s$, let $f\in L^1(\R)\cap C(\R) \cap C^1\big( (-\infty, 0) \cup (0,+\infty)\big)$ with $\wck f\in \Sa_s(\R)$ such that
	 \begin{equation}\label{condffff}\begin{aligned}
	 &|f(x)|\leq c_1|x|^{2s} \quad &\text{ for } & x \in \R\\
	   & |f(x)|\leq \frac{c_2}{|x|} \quad &\text{ for } &|x|>1 \\
	  	 &|f'(x)|\leq c'_1|x|^{2s-1} \quad &\text{ for } &0 <|x|\leq 1\\
	  &  |f'(x)|\leq \frac{c'_2}{|x|} \quad &\text{ for } &|x|>1. \end{aligned} \end{equation}
 Then
\[\int_{\Rn} \Phi(x) \wck f(x) \, dx =\int_{\Rn} ({2\pi} |x|)^{-2s} f(x) \, dx.\]
\end{proposition}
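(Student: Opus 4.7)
The plan is to reduce the identity to a Gaussian calculation, exploiting the subordination representation
\[|x|^{-\alpha}=\frac{\pi^{\alpha/2}}{\Gamma(\alpha/2)}\int_0^\infty t^{\alpha/2-1}e^{-\pi t|x|^2}\,dt\qquad(\alpha>0),\]
the self-reciprocal property $\F(e^{-\pi t|\cdot|^2})(\xi)=t^{-n/2}e^{-\pi|\xi|^2/t}$, and the symmetric duality $\int\wck f(x)\,g(x)\,dx=\int f(\xi)\,\wck g(\xi)\,d\xi$, which follows immediately from writing $\wck f(x)=\int f(\xi)e^{2\pi i x\cdot\xi}\,d\xi$ and applying Fubini whenever both integrals converge absolutely.

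For case (a), I would take $\alpha=n-2s>0$, substitute the Gaussian representation into $\int\Phi(x)\wck f(x)\,dx$, swap the $t$- and $x$-integrals by Fubini (justified since $\wck f\in\Sa_s(\Rn)\subset L^1(\Rn)$ and the Gaussian damps everything), apply the duality against $e^{-\pi t|x|^2}$ to move the transform onto the Gaussian, which yields $t^{-n/2}\int f(\xi)e^{-\pi|\xi|^2/t}\,d\xi$, swap back, and change variables $u=1/t$. The inner integral then collapses to $\int_0^\infty u^{s-1}e^{-\pi u|\xi|^2}\,du=\Gamma(s)(\pi|\xi|^2)^{-s}$. After collecting powers of $\pi$, $2$, and $\Gamma$ factors, the explicit value $a(n,s)=\Gamma(n/2-s)/(2^{2s}\pi^{n/2}\Gamma(s))$ from \eqref{ctans1} is exactly what converts the resulting prefactor into $(2\pi)^{-2s}$, producing the claimed right-hand side.

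For case (b), with $n=1$ and $2s\ge 1$, the Gaussian representation fails since $\alpha=n-2s\le 0$. I would replace it with the subtracted variant
\[|x|^\beta=\frac{\pi^{\beta/2}}{-\Gamma(-\beta/2)}\int_0^\infty t^{-1-\beta/2}\bigl(1-e^{-\pi t|x|^2}\bigr)\,dt,\qquad \beta=2s-1\in[0,1),\]
handling the borderline case $\beta=0$ (that is, $s=1/2$, $n=2s$) by differentiating $|x|^\beta$ in $\beta$ at $\beta=0$ to produce $\log|x|$; this also explains the origin of the exceptional constant $a(1,\tfrac12)=-1/\pi$. The conditions \eqref{condffff} play a crucial role here: the bound $|f(x)|\le c_1|x|^{2s}$ forces $f(0)=\int\wck f(x)\,dx=0$, so the spurious constant term produced by the ``$-1$'' in the subtracted Gaussian vanishes identically, and the rest of the computation mirrors case (a). The main obstacle will be the rigorous justification of Fubini for the subtracted representation: near $t=0$ the factor $1-e^{-\pi t|x|^2}$ vanishes only at rate $t|x|^2$ while $\wck f$ need not integrate against $|x|^2$, and near $t=\infty$ the integrand decays only polynomially in $t$. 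Producing a uniform $L^1$-dominator will require combining \eqref{condffff} on $f$ and $f'$ with the decay of $\wck f\in\Sa_s(\R)$, most plausibly through a single integration by parts that trades the growth of $\Phi$ at infinity against the bound $|f'(x)|\le c'_2/|x|$.
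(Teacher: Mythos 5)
For part (a), your plan coincides with the paper's: writing $|x|^{-(n-2s)}$ via the Gaussian subordination integral, dualizing against $e^{-\pi t|x|^2}$, and then re-integrating over $t$ is exactly what the paper does when it applies Parseval to $e^{-\pi\delta|\cdot|^2}$, multiplies by $\delta^{\frac n2-s-1}$, integrates over $\delta$, and identifies the constants $c_1,c_2$; the substitution $u=1/t$ in your version is the paper's change of variable $\alpha=|x|^2/\delta$. So for $n>2s$ your route is the same argument, only packaged differently.

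For part (b) your route is genuinely different from the paper's, and the key difficulty is exactly where you flag it. The paper does not use any subordination identity for $n\le 2s$: it computes $\int_{B_R}|x|^{2s-1}\wck f(x)\,dx$ (respectively with $\log|x|$) directly, rewrites $\wck f$ as a cosine integral against $f$, integrates by parts once in $x$, estimates the boundary term and the remaining kernel $\int_0^{\bar R|\xi|}t^{2s-2}\sin t\,dt$ using the pointwise bounds \eqref{condffff}, and then sends $R\to\infty$, invoking the explicit contour-integral identity $\int_0^\infty t^{2s-2}\sin t\,dt=-\cos(\pi s)\Gamma(2s-1)$. Your subtracted-Gaussian representation $|x|^{\beta}=\frac{1}{-\pi^{\beta/2}\Gamma(-\beta/2)}\int_0^\infty t^{-1-\beta/2}(1-e^{-\pi t|x|^2})\,dt$ (note the power of $\pi$ should be in the denominator, not the numerator, as a direct scaling check confirms) is a legitimate alternative, and the observation that $|f(x)|\le c_1|x|^{2s}$ forces $f(0)=\int\wck f=0$ so the $-1$ does not produce a spurious term is the right reason such a representation has a chance. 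However, the Fubini justification you defer is a real gap, not a formality: near $t=0$ the kernel $1-e^{-\pi t|x|^2}$ only supplies an extra factor $t|x|^2$, while $\wck f\in\Sa_s(\R)$ only gives $|\wck f(x)|\lesssim|x|^{-1-2s}$, so $|x|^2\wck f(x)$ is not integrable and there is no dominating function for the double integral. You would need to make quantitative the cancellation coming from $\int\wck f=0$, for example by replacing $1-e^{-\pi t|x|^2}$ with $\,e^{-\pi t}-e^{-\pi t|x|^2}$ (allowed since the difference integrates to zero against $\wck f$) and then estimating the difference, or by working with a truncation in $x$ and passing to the limit with the bounds \eqref{condffff} on $f,f'$ — which is essentially what the paper's $R$-truncation and integration by parts accomplish. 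In addition, the endpoint $\beta=0$ ($n=2s$) requires differentiating the representation in $\beta$; this gives the right heuristic for the $\log$ and the value $a(1,\tfrac12)=-1/\pi$, but introduces a second limit that would also need to be justified uniformly. So your plan is a promising and genuinely different strategy for (b), but as written it does not yet constitute a proof, whereas the paper's direct truncated computation closes the argument.
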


\begin{proof}
We notice that the conditions \eqref{condffff} on $f$ assure that the integrals are well defined. Indeed, since $\Phi \in L_s^1(\Rn) \subset \Sa'_s(\Rn)$ the left hand side is finite thanks to \eqref{db1}. The right hand side is also finite since, for $n>2s$,
	\[ \begin{split} \int_{\Rn} |f(x)||x|^{-2s} \, dx  \leq &\; c_n\sup_{x\in B_1} |f(x)| \int_0^1 \rho^{n-2s-1}\, d\rho + \int_{\Rn \setminus B_1}  |f(x)| |x|^{-2s}\, dx\\
		\leq &\; c_n\sup_{x \in B_1} |f(x)| + \|f\|_{L^1(\Rn)} \end{split}\]
		and for $n\leq 2s$  we have that
	\[ \begin{split}\int_{\R} |f(x)||x|^{-2s} \, dx  \leq &\; \int_{\R\setminus B_1} |f(x)| |x|^{-2s} \, dx+ c_1 \int_{B_1}  dx \\
							\leq &\; \|f\|_{L^1(\R)} + 2c_1.\end{split}\]

a) For $n>2s$ we prove that
	\begin{equation} \label{firstcl111} a(n,s)\int_{\Rn} |x|^{-n+2s} \wck {f}(x)\, dx=\int_{\Rn} ({2\pi}|x|)^{-2s} f(x)\, dx.\end{equation}
We use the Fourier transform of the Gaussian distribution as the starting point of the proof. For any $ \delta > 0$ we have that
	\[    \mathcal{F} (e^{-\pi \delta |x|^2})  = \delta^{- \frac{n}{2}} e^{-\pi\frac{|x|^2}{\delta}}.\]
In particular for any $ f \in L^1(\Rn)$ and $\wck {f}\in \Sa_s(\Rn)$ (which is a subspace of $L^2(\Rn)$), by Parseval identity we obtain
	\[ \int_{\Rn}  e^{-\pi \delta |x|^2} \wck {f}(x) \, dx= \int_{\Rn} \delta^{-\frac{n}{2}} e^{-\pi\frac{|x|^2}{\delta}}  f(x)\, dx. \]
We multiply both sides by $\delta^{\frac{n}{2} -s-1}$, integrate in $\delta$ from $0$ to $\infty$. We use the notations
	\[I_1=\int_0^{\infty} \Bigg( \int_{\Rn} \delta^{\frac{n}{2} - s -1} e^{-\pi\delta |x|^2}\wck {f} (x)  \, dx\Bigg) \, d\delta \]
and
	\[I_2= \int_0^{\infty} \Bigg( \int_{\Rn} \delta^{- s -1} e^{-\pi \frac {|x|^2}{\delta}}   f(x) \, dx\Bigg) \, d\delta,\] having $I_1=I_2$.
We perform in $I_1$ the change of variable $\alpha =\delta|x|^2$ and obtain that
	\[I_1 =  \int_{\Rn}|x|^{-n+2s} \wck{f} (x)  \Bigg(\int_0^{\infty} \alpha^{\frac{n}{2} -s-1}  e^{-\pi\alpha}\, d\alpha \Bigg)\, dx. \]
We set	\begin{equation} \label{c1} c_1 := \int_0^{\infty} \alpha^{\frac{n}{2} -s-1}  e^{-\pi\alpha}\, d\alpha, \end{equation} which is a finite quantity since $ \frac{n}{2} -s-1 > -1$.
On the other hand in $I_2$ we change the variable $ \displaystyle \alpha = {|x|^2}/{\delta} $ and obtain that
	\[I_2 =  \int_{\Rn}   |x|^{-2s}  f(x) \Bigg(\int_0^{\infty} \alpha^{s-1}  e^{-\pi\alpha}\, d\alpha \Bigg)\, dx. \]
We then set
	\begin{equation} c_2 :=  \int_0^{\infty} \alpha^{s-1}  e^{-\pi\alpha}\, d\alpha\label{c2},  \end{equation} which is finite since $ s-1 > -1$.
As $I_1=I_2$ it yields that
	\begin{equation*}  \frac{c_1}{c_2 ({2\pi})^{2s}} \int_{\Rn} |x|^{-n+2s}  \wck{f} (x)\, dx =  \int_{\Rn} ({2\pi}|x|)^{-2s}   f(x)\, dx . \end{equation*}
We take
	\begin{equation}\label{aaargh1} a(n,s) =  \frac{c_1}{c_2 ({2\pi})^{2s}}\end{equation}and the claim \eqref{firstcl111} follows. This concludes the proof for $n>2s$.

b) For $n<2s$ (hence $n=1$ and $s>1/2$), let $R>0$ be as large as we wish (we will make $R$ go to $\infty$ in sequel). Then
	\[ \begin{split}
		\int_{B_R} |x|^{2s-1} \wck f (x) \, dx  = &\;\int_0^R x^{2s-1} \Big( \wck f(x) + \wck f(-x)\Big) \, dx \\
		= &\;2\int_0^R x^{2s-1} \int_{\R} f(\xi) \cos(2\pi  \xi x) \, d\xi \, dx \\
		=&\; 2 \int_{\R} f(\xi) \left( \int_0^R x^{2s-1} \cos(2\pi \xi x) \, dx\right) \, d\xi.\end{split} \]
{We use the change of variables $\bar x = 2 \pi x$ (but still write $x$ as the variable of integration for simplicity) and let $\bar R = 2 \pi R$}. Then
 \[\int_0^R x^{2s-1} \cos({2\pi} \xi x) \, dx = {(2\pi)^{-2s}} \int_0^{\bar R} x^{2s-1} \cos(\xi x) \, dx.\]
 We have that
 	\[ 	\int_{B_R} |x|^{2s-1} \wck f (x) \, dx  ={\frac{2^{1-2s}}{\pi^{2s}} }\int_{\R} f(\xi) \left( \int_0^{\bar R} x^{2s-1} \cos(\xi x)\, dx  \right)\, d\xi.\]
Integrating by parts and changing variables $|\xi| x=t$ we obtain that
	\[ \begin{split}
				\int_0^{\bar R} x^{2s-1} \cos(\xi x) \, dx = &\;x^{2s-1} \frac{\sin(\xi x)}{\xi} \bigg|_0^{\bar R} -(2s-1) \int_0^{\bar R} x^{2s-2} \frac{\sin(\xi x)}{\xi} \, dx \\
				=&\; {\bar R}^{2s-1} \frac{\sin(\xi {\bar R})}{\xi} -(2s-1)|\xi|^{-2s} \int_0^{{\bar R}|\xi|} t^{2s-2} \sin t \, dt.
	\end{split} \]
	Therefore
	\begin{equation} \begin{split}\label{wckn1}
		\int_{B_R} |x|^{2s-1} \wck f(x)\, dx = &\;\frac{2^{1-2s}}{\pi^{2s} } {\bar R}^{2s-1} \int_{\R} f(\xi)\frac{\sin(\xi {\bar R})}{\xi} \, d\xi \\
			&\;- \frac{2^{1-2s}(2s-1) }{\pi^{2s} }  \int_{\R} f(\xi) |\xi |^{-2s} \bigg( \int_0^{{\bar R}|\xi|} t^{2s-2} \sin t \, dt \bigg)\, d\xi. \end{split}\end{equation}
We claim that
	\begin{equation}\label{n1c1} \lim_{R \to \infty} {\bar R}^{2s-1}\int_{\R} f(\xi)\frac{\sin(\xi {\bar R})}{\xi} \, d\xi=0 . \end{equation}
	We integrate by parts and obtain that
	\[ \begin{split}
		\bigg|\int_0^{\infty} f(\xi) \frac{\sin(\xi {\bar R})}{\xi} \, d\xi \bigg| \leq &\;  \frac{|f(\xi)|}{\xi} \frac{|\cos(\xi {\bar R})|}{{\bar R}} \bigg|_{0}^{\infty} + \frac{1}{{\bar R}}   \bigg( \int_{0}^{\infty}  |\cos(\xi {\bar R})| \frac{|f(\xi)|}{\xi^2} \, d\xi   \\
		&\; +\int_{0}^{\infty} |\cos(\xi {\bar R})| \frac{|f'(\xi)|}{\xi} \, d\xi\bigg).
	\end{split}\]
By \eqref{condffff}, for $\xi $ large we have that
	 \[ \frac{|f(\xi)|}{\xi}|\cos(\xi {\bar R})|  \leq c_2\frac{|\cos(\xi {\bar R})| }{\xi^2}, \quad \mbox{hence} \quad \lim_{\xi \to  \infty}  \frac{|f(\xi)|}{\xi} \frac{|\cos(\xi {\bar R})|}{{\bar R}} =0.\]
	  For $\xi $ small we have that \[ \frac{|f(\xi)|}{\xi} \leq c _1\xi^{2s-1},\quad \mbox{hence} \quad  \lim_{\xi \to 0} \frac{|f(\xi)|}{\xi} \frac{|\cos(\xi {\bar R})|}{{\bar R}} =0.\]
Furthermore, by changing variables $t=\xi {\bar R}$ (and noticing that the constants may change value from line to line) we have that
\[\begin{split}
	\int_0^{\infty} \frac{|f(\xi)|}{\xi^2} |\cos (\xi {\bar R}) |\, d\xi \leq &\;c_1 \int_0^1   \xi^{2s-2}  |\cos (\xi {\bar R}) | \, d\xi  +c_2  \int_1^{\infty}  \xi^{-3}  |\cos (\xi {\bar R}) |\, d\xi \\
	\leq &\;c_1 {\bar R}^{1-2s} \int_0^{\bar R} t^{2s-2}  |\cos t |\, dt +c_2 {\bar R}^2  \int_{\bar R}^{\infty} t^{-3}|\cos t| dt
	\leq   \frac{c}{2}
	\end{split}\]
and \[\begin{split}
	\int_0^{\infty} \frac{|f'(\xi)|}{\xi} |\cos (\xi {\bar R}) |\, d\xi \leq &\;c_1' \int_0^1 	  \xi^{2s-2} |\cos (\xi {\bar R}) |  \, d\xi  +c_2' \int_1^{\infty}  \xi^{-2} |\cos (\xi {\bar R}) |  \, d\xi \\
	\leq &\; c_1' {\bar R}^{1-2s} \int_0^{\bar R} t^{2s-2}|\cos t| \, dt +c_2'{\bar R} \int_{\bar R}^{\infty}t^{-2} |\cos t|\, dt
	 \leq  \frac{c}{2}.
	\end{split}\]
Hence \[\bigg| \int_0^\infty  f(\xi) \frac{\sin(\xi{\bar R})}{\xi} \, d\xi \bigg| \leq \frac{c}{R},\]
and in the same way we obtain
	\[\bigg|\int_{-\infty}^0  f(\xi) \frac{\sin(\xi {\bar R})}{\xi} \, d\xi  \bigg|= \bigg|\int_0^\infty  f(-\xi) \frac{\sin(\xi {\bar R})}{\xi} \, d\xi  \bigg| \leq \frac{c}{R}.\] Therefore
	\[ \lim_{R\to \infty} {\bar R}^{2s-1} \int_{\R}  f(\xi) \frac{\sin(\xi {\bar R})}{\xi} \, d\xi   = 0 \] and we have proved the claim \eqref{n1c1}.
	Now we claim that (and this holds also for $n= 2s$)
\begin{equation}\label{n1c2}   \lim_{R \to \infty} \int_\R f(\xi)|\xi|^{-2s}  \Big(\int_0^{{\bar R}|\xi|} t^{2s-2} \sin t \, dt\Big) \, d\xi = - \cos (\pi s) \Gamma(2s-1) \int_\R f(\xi)|\xi|^{-2s}   \, d\xi .\end{equation} In order to prove this, we estimate the difference
	\[ \begin{split}
		\bigg| \int_0^\infty t^{2s-2} \sin t \, dt &\; -\int_0^{{\bar R}|\xi|} t^{2s-2} \sin t \, dt \bigg| \leq  \bigg| \int_{{\bar R}|\xi|}^\infty t^{2s-2} \sin t \, dt\bigg|\\
		\leq &\; |t ^{2s-2} \cos t |\bigg|_{{\bar R}|\xi|}^\infty + (2s-2) \int_{{\bar R}|\xi|}^\infty |t|^{2s-3} | \cos t | \, dt
		\leq  c({\bar R}|\xi|)^{2s-2}.
	\end{split}\]
We then have that
	\[ \begin{split}
	\bigg|	\int_\R f(\xi)|\xi|^{-2s}  &\Big( \int_0^\infty t^{2s-2} \sin t \, dt  -\int_0^{{\bar R}|\xi|} t^{2s-2} \sin t \, dt \Big)  \, d\xi \bigg| \\
	\leq&\; c {\bar R}^{2s-2} \int_\R |f(\xi)| |\xi|^{-2}\, d\xi \\
	\leq &\; c {\bar R}^{2s-2}\bigg( c_1\int_0^1  \xi^{2s-2} \, d\xi + c_2\int_1^\infty \xi^{-3}\, d\xi \bigg)
	= R^{2s-2} \overline c. \end{split}\]
	Hence we obtain
		\[  \lim_{R \to \infty} \int_\R f(\xi)|\xi|^{-2s}  \Big(\int_0^{{\bar R}|\xi|}t^{2s-2} \sin t \, dt\Big) \, d\xi = \int_\R f(\xi)|\xi|^{-2s} \left(\int_0^{\infty}t^{2s-2} \sin t \, dt \right)  \, d\xi \]
{and the claim \eqref{n1c2} follows from the identity \eqref{ctcomp1111} in the Appendix. }

By sending $R$ to infinity in \eqref{wckn1} we finally obtain  that
		\begin{equation} \begin{split} \label{oba1s}
			\int_{\R} |x|^{2s-1} \wck f(x)\, dx =&\; \frac{2^{1-2s}}{\pi^{2s}} (2s-1) \cos(\pi s) \Gamma(2s-1) \int_\R |\xi|^{-2s} f(\xi)\, d\xi \\
				=&\;   2\cos(\pi s) \Gamma(2s) \int_\R (2\pi|\xi|)^{-2s} f(\xi)\, d\xi.\end{split}\end{equation}
Therefore {taking $ a(1,s)  =({2\cos(\pi s)\Gamma(2s)})^{-1}$ we obtain that}
	\[ a(1,s) \int_{\R} |x|^{2s-1} \wck f(x)\, dx = \int_\R ({2\pi} |x|)^{-2s} f(x)\, dx ,\]
			hence the result for $n<2s$.
			
Now, for $n=2s$ we have that
			\[ \int_{B_R} \log |x| \wck f(x)\, dx = 2\int_{\R} f(\xi) \left(\int_0^R \log x \cos({2 \pi} \xi x) \, dx \right)\, d\xi .\]
		{We change variables $\bar x = 2 \pi x$ (but still write $x$ as the variable of integration for simplicity) and let $\bar R = 2 \pi R$}. Then we have that
		\[ \begin{split} \int_0^R \log x\cos(2\pi \xi x)\, dx = &\; \int_0^{\bar R } \Big(\log x -\log (2\pi) \Big) \cos(\xi x)\, \frac{dx}{2\pi}\\
			= &\; \frac{1}{2\pi}  \int_0^{\bar R}  \log x\cos(\xi x) \, dx - \frac{\log(2\pi)}{2\pi} \frac{\sin(\xi \bar R)}{\xi}.
			\end{split}\]
	We integrate by parts and obtain that
			\[\int_0^{\bar R} \log x\cos(\xi x) \, dx = \log {\bar R} \frac{ \sin (\xi {\bar R})}{\xi} -\frac{1}{|\xi|} \int_0^{{\bar R}|\xi|} \frac{\sin t }{t}\,dt.\]
	We thus have that
			\[ \int_{B_R} \log |x| \wck f(x)\, dx = \frac{1}{\pi} \log R \int_{\R} f(\xi) \frac{\sin(\xi \bar R)}{\xi} \, d\xi - \frac{1}{\pi} \int_{\R} f(\xi) |\xi|^{-1} \left (\int_0^{\bar R |\xi|} \frac{\sin t}{t} \, dt \right)\, d\xi.\]
			We claim that\begin{equation} \label{n2c1} \lim_{R\to \infty} \log R \int_{\R} f(\xi) \frac{ \sin (\xi {\bar R} )}{\xi} \, d\xi =0  .\end{equation}
	Indeed we have that \[ \bigg|\int_0^{1/{\bar R}} f(\xi) \frac{\sin (\xi {\bar R})}{\xi} \, d\xi \bigg| \leq  \int_0^{1/{\bar R}} |f(\xi)| \frac{\xi {\bar R}}{\xi} \, d\xi \leq c_1 {\bar R}\int_0^{1/{\bar R}} \xi \, d\xi = \frac{  c_1}{\bar R}.\]
	  Moreover  integrating by parts we have that
	 \[ \begin{split}
	 	\bigg| \int_{1/{\bar R}}^{\infty} f(\xi) \frac{\sin (\xi {\bar R})}{\xi} \, d\xi  \bigg| \leq &\;  \frac{|f(\xi)|}{\xi } \frac{| \cos(\xi {\bar R})| } {{\bar R}} \Bigg|_{1/{\bar R}}^{\infty}  +\frac{1}{{\bar R}} \Bigg( \int_{1/{\bar R}}^{\infty} \frac{|f(\xi)|}{\xi^2} |\cos(\xi {\bar R})|\, d\xi\\
	 	&\; + \int_{1/{\bar R}}^{\infty} \frac{|f'(\xi)|}{\xi}| \cos(\xi {\bar R})| \, d\xi \Bigg).
		\end{split}\]
	We have that for $\xi$ large
			 	\[  \frac{|f(\xi)|}{\xi } \frac{|\cos(\xi {\bar R})|}{{\bar R}} \leq \frac{c_2}{\bar R} \frac{|\cos(\xi {\bar R})|}{\xi^2} \] hence
			 	\[ \lim_{\xi\to \infty} \frac{|f(\xi)|}{\xi } \frac{|\cos(\xi {\bar R})|} {{\bar R}}  =0.\]
	On the other hand by using the change of variables $t=\xi {\bar R}$
		\[ \begin{split} \int_{1/{\bar R}}^{\infty} \frac{|f(\xi)|}{\xi^2}|\cos(\xi {\bar R})| \, d\xi \leq &\;c_1 \int_{1/{\bar R}}^{1} \frac{ |\cos(\xi {\bar R})| }{\xi}\, d\xi +c_2 \int_1^{\infty} \frac{|\cos(\xi {\bar R})|}{\xi^3} \, d\xi\\
		  =&\; c_1 \int_1^{\bar R} \frac{|\cos t|}{t} \, dt + c_2 \int_{\bar R}^{\infty} \frac{{\bar R}^2}{t^3} |\cos t|\, dt
		  \leq \bar c_1 \log R +\bar c_2.
		\end{split}\]
			 Moreover
			 \[ \begin{split} \int_{1/{\bar R}}^{\infty} \frac{|f'(\xi)|}{\xi}|\cos(\xi {\bar R})| \, d\xi \leq &\;c'_1 \int_{1/{\bar R}}^{1} \frac{|\cos(\xi {\bar R})|}{\xi} \, d\xi +c'_2 \int_1^{\infty} \frac{|\cos(\xi {\bar R})| }{\xi^2} \, d\xi\\
		  =&\; c'_1 \int_1^{\bar R} \frac{|\cos t|}{t} \, dt + c'_2 \int_{\bar R}^{\infty} \frac{{\bar R}}{t^2} |\cos t|\, dt
		  \leq\bar c'_1 \log { R} +\bar c'_2.
		\end{split}\]
		Hence \[ \lim_{R \to \infty} \log R \int_0^{\infty} f(\xi) \frac{\sin (\xi {\bar R})}{\xi} \, d\xi =0\] and since the same bounds hold for $ \int_{-\infty}^0  f(\xi) \frac{\sin (\xi {\bar R})}{\xi} \, d\xi ,$  the claim \eqref{n2c1} follows.
	Also, the proof of the claim \eqref{n1c2} assures us that  \[ \begin{split} \lim_{R \to \infty} \int_0^{\infty} f(\xi) |\xi|^{-1}  \bigg(\int_0^{{\bar R}|\xi|} \frac{\sin t }{t}\,dt \bigg) \, d\xi  = &\; \int_0^{\infty} f(\xi) |\xi|^{-1} \bigg(\int_0^{\infty} \frac{\sin t }{t}\,dt \bigg) \, d\xi   \\
			  =&\; \frac{\pi}{2} \int_0^{\infty} f(\xi) |\xi|^{-1} \, d \xi .\end{split}\] It follows that
			\begin{equation*} \int_{\R} \log |x| \wck f(x)\, dx = -\frac{1}{2}  \int_\R |\xi|^{-1} f(\xi)\, d\xi,  \end{equation*} 	
				hence 				
				\begin{equation} \label{obans3} -\frac{1}\pi  \int_{\R} \log |x| \wck f(x)\, dx = \int_\R ({2\pi} |\xi|)^{-1} f(\xi)\, d\xi  \end{equation}
				and the result holds for $n=2s$.				This concludes the proof of the Proposition.
	\end{proof}

\begin{remark}
It is now clear that we have chosen $a(n,s)$ in Definition \ref{definition:ctn} in order to normalize the Fourier transform of the fundamental solution. Indeed, for $n>2s$, we perform the change of variable $\pi \alpha =t$ in \eqref{c1} and by \eqref{gamma} we obtain that
	\[ c_1= \pi^{ s-\frac{n}{2}} \int_0^{\infty} t^{\frac{n}{2}-s-1} e^{-t} \, dt = \pi^{ s-\frac{n}{2}}  \Gamma \bigg( \frac{n}{2} -s\bigg).\]
Also in \eqref{c2} we change the variable $\pi \alpha =t$ and get that
	\[c_2= \pi^{ -s}  \int_0^{\infty}t^{s-1} e^{-t}\, dt= \pi^{ -s} \Gamma(s).\]
Therefore
	 \begin{equation*} \frac{c_1}{c_2}= \frac{ \pi ^ {2s-\frac{n}{2} }  \Gamma (\frac{n}{2}-s)} {\Gamma(s)}, \quad  \mbox{hence by \eqref{aaargh1}}\quad   a(n,s)= {\frac{ \Gamma (\frac{n}{2}-s)} {2^{2s}\pi^{\frac{n}2} \Gamma(s)}}.\end{equation*}
The value $a(1,s)$ is computed in \eqref{oba1s}. We point out that we can rewrite this value using \eqref{gam1} and \eqref{gam2}, as follows
	\[\begin{aligned}  a(1,s) = \frac{1}{2\cos(\pi s)\Gamma(2s) }= \frac{\Gamma(1/2-s) }{2^{2s} \sqrt{\pi} \Gamma(s)} .\end{aligned}\]
	Moreover, we observe that identity \eqref{obans3} says that
				\[ a\left(1,\frac{1}{2}\right) = -\frac{1}{\pi}.  \]
		\end{remark}		

By applying this latter Proposition \ref{proposition:ansss}, we prove Theorem \ref{theorem:thm3}.
\begin{proof}[Proof of Theorem \ref{theorem:thm3}]
For any $f\in \mathcal{S}(\Rn)$ we have that $\F^{-1} \Big(|\xi|^{2s} \widehat f(\xi) \Big) \in \Sa_s(\Rn) $ (according to definition \eqref{frlaphdef} and to \eqref{frb1}). Notice that $|\xi|^{2s} \widehat f(\xi) \in L^1(\Rn)\cap C(\Rn)$, since
	\[ \int_{\Rn} |x|^{2s}|\widehat f(x)|\, dx\leq [\widehat f]_{\Sa(\Rn)}^{0,n+2} \int_{\Rn \setminus B_1} |x|^{2s-n-2} \, dx + \sup_{x\in B_1} |\widehat f(x)| \leq c(f), \]where we use the seminorm defined in \eqref{seminormss}.
Moreover, for $n\leq 2s$ we have that
	 	\[\begin{aligned}
	 		&  |\xi|^{2s} |\widehat f(\xi)| \leq \|\widehat f\|_{L^{\infty} (\R)} |\xi|^{2s} = c_1 |\xi|^{2s} \quad &\text{ for } &\xi\in \R,\\
	 		&  |\xi|^{2s} |\widehat f(\xi)| \leq [\widehat f(\xi)]_{\Sa(\R)}^{0,3} |\xi|^{2s-3} \leq \frac{c_2}{|\xi|} \quad &\text{ for } &|\xi| > 1.\end{aligned}\]  Also for $0 \neq|\xi|\leq 1$
	 		\[\begin{split} \Big| \frac{d}{d\xi} \big( |\xi|^{2s}\widehat f(\xi)\big) \Big| \leq &\; 2s |\xi|^{2s-1} |\widehat f(\xi)| + |\xi|^{2s}\Big|\frac{d}{d\xi} \widehat f (\xi)\Big| \\
	 		\leq&\; |\xi|^{2s-1} \bigg( 2s\|\widehat f \|_{L^{\infty}(\R)} +\Big\|\frac{d\widehat f(\xi)}{d\xi}  \Big \|_{L^{\infty}(\R)} \bigg)
	 		= c_1' |\xi|^{2s-1} \end{split}\]
	 		and for $|\xi|>1$ 	 		
	 \[\begin{split} \Big| \frac{d}{d\xi} \big( |\xi|^{2s}\widehat f(\xi)\big) \Big| \leq &\; 2s |\xi|^{2s-1} |\widehat f(\xi)| + |\xi|^{2s}\Big|\frac{d}{d\xi} \widehat f (\xi)\Big| \\	
	 			\leq &\; 2s [\widehat f]_{\Sa(\R)}^{0,2} |\xi|^{2s-3} + |\xi|^{2s -3} [\widehat f]_{\Sa(\R)}^{1,3}
	 			\leq c_2' |\xi|^{-1}, \\
	 		\end{split}\]
which proves that $f$ satisfies \eqref{condffff}. From Proposition \ref{proposition:ansss} it follows that
	\begin{equation*}
		\begin{split}
	<\Phi, \frlap f>_s=   &\;\int_{\Rn} \Phi(x) \F^{-1}\Big( (2\pi|\xi|)^{2s} \widehat f(\xi) \Big)(x) \, dx \\
	= &\;\int_{\Rn} (2\pi|\xi|)^{-2s} (2\pi |\xi|)^{2s} {\widehat f (\xi)}  \, d\xi
	=\int_{\Rn} {\widehat f (\xi)}  \, d\xi
	= f(0).
			\end{split}
	 \end{equation*}

Therefore in the distributional sense
\begin{equation*} \frlap \Phi = \delta_0 .	\qedhere\end{equation*}
\end{proof}

{We have the following Lemma.
\begin{lemma}\label{lem:uff1}
Let  $f \in  C_c(\Rn) $, then $ f*\Phi \in  L_s^1(\Rn)$.
\end{lemma}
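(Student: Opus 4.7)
The plan is to split the weighted integral defining the $L_s^1$--norm into a bounded part and a tail part, and to control each separately. Let $R>0$ be chosen so that $\operatorname{supp} f \subseteq B_R$; then $f\in L^\infty(\Rn)\cap L^1(\Rn)$. The goal is to show that
\[
\|f*\Phi\|_{L_s^1(\Rn)}=\int_{B_{2R}}\frac{|(f*\Phi)(x)|}{1+|x|^{n+2s}}\,dx+\int_{\Rn\setminus B_{2R}}\frac{|(f*\Phi)(x)|}{1+|x|^{n+2s}}\,dx
\]
is finite.

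First I would check that $\Phi \in L^1_{\mathrm{loc}}(\Rn)$ in all three cases: for $n>2s$ the singularity $|x|^{2s-n}$ is integrable near the origin since $2s-n>-n$; for $n<2s$ (so $n=1$, $s>1/2$) the function $|x|^{2s-1}$ is continuous; and for $n=2s$ the function $\log|x|$ is locally integrable. A direct Tonelli argument then yields
\[
\int_{B_{2R}}|(f*\Phi)(x)|\,dx \leq \|f\|_{L^1(\Rn)}\sup_{y\in B_R}\int_{B_{2R}}|\Phi(x-y)|\,dx<\infty,
\]
which handles the bounded part (since the weight $(1+|x|^{n+2s})^{-1}$ is bounded by $1$).

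For the tail, the key observation is that if $|x|>2R$ and $y\in B_R$, then $\tfrac12|x|\leq |x-y|\leq 2|x|$. Plugging these bounds into the explicit formula \eqref{fundsolution} gives a pointwise estimate of the form $|(f*\Phi)(x)|\leq C\,g(|x|)\|f\|_{L^1(\Rn)}$, where $g(|x|)=|x|^{2s-n}$ if $n>2s$, $g(|x|)=|x|^{2s-1}$ if $n<2s$, and $g(|x|)\leq C(1+\log|x|)$ if $n=2s$. In each case
\[
\int_{\Rn\setminus B_{2R}}\frac{g(|x|)}{1+|x|^{n+2s}}\,dx<\infty,
\]
since the integrand decays at infinity like $|x|^{-2n}$, $|x|^{-2}$, or $|x|^{-4s}\log|x|$ respectively, each integrable against the surface element $|x|^{n-1}\,d|x|$.

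The only mildly delicate point is the case $n=2s$, where $\Phi$ grows logarithmically instead of decaying, so the bound $|\Phi(x-y)|\leq \tfrac{1}{\pi}|\log|x-y||\leq C(1+\log|x|)$ must be combined with the fast polynomial decay of the weight to obtain integrability at infinity; this is straightforward once the two-sided bound $\tfrac12|x|\leq|x-y|\leq 2|x|$ is in hand. Putting the two parts together yields $f*\Phi\in L_s^1(\Rn)$.
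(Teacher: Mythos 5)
Your argument is correct and follows the same strategy as the paper's proof: use the compact support of $f$, split the $x$-integral into a bounded region near the support and a far region, and observe that near the support the conclusion follows from local integrability of $\Phi$, while far away the polynomial weight $(1+|x|^{n+2s})^{-1}$ dominates the decay ($n>2s$) or growth ($n\le 2s$) of $\Phi$. The only cosmetic difference is that the paper pulls out $\|f\|_{L^\infty(\Rn)}$ and reduces the claim to finiteness of the fixed double integral $\int_{B_R}\int_{\Rn}\frac{|\Phi(x-y)|}{1+|x|^{n+2s}}\,dx\,dy$, whereas you estimate the convolution pointwise in the tail via the two-sided bound $\tfrac12|x|\le|x-y|\le 2|x|$ and use $\|f\|_{L^1(\Rn)}$; these are equivalent routes for $f\in C_c(\Rn)$.
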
 }

\begin{proof}
To prove that $f*\Phi \in  L_s^1(\Rn)$, we suppose that $\mbox{supp}\, f\subseteq B_R$ and we compute
\[\begin{split}
\int_{\Rn} \frac{|f*\Phi(x)|}{1+|x|^{n+2s} }\, dx =&\;
	\int_{B_R}| f(y)|\bigg(\int_{\Rn}\frac{ \Phi(x-y)}{1+|x|^{n+2s}} \, dx \bigg) \, dy\\
	\leq &\; \|f\|_{L^\infty(\Rn)} \int_{B_R} \bigg(\int_{\Rn}\frac{ \Phi(x-y)}{1+|x|^{n+2s}} \, dx \bigg) \, dy.
	\end{split} \]
We set
	\begin{equation} \label{intppr}{ c_{n,s,R} :=\int_{B_R} \bigg(\int_{\Rn}\frac{ \Phi(x-y)}{1+|x|^{n+2s}} \, dx \bigg) \, dy} \end{equation}
	and prove it is a finite quantity.
	We take for simplicity $R=1$ and remark that the constants in the next computations may change value from line to line. For $n > 2s$ we have that
   \[\begin{split}
   	 \int_{B_1} \bigg(\int_{\Rn}\frac{ \Phi(x-y)}{1+|x|^{n+2s}} \, dx \bigg) \, dy  = a(n,s) \int_{B_1} \bigg(\int_{\Rn}\frac{ |x-y|^{2s-n}}{1+|x|^{n+2s}} \, dx \bigg) \, dy.   	 	
   \end{split} \]
 For $x$ small we have that
      \[\begin{split}
       \int_{B_1} \bigg(\int_{B_2}\frac{ |x-y|^{2s-n}}{1+|x|^{n+2s}} \, dx \bigg) \, dy   \leq&\; \int_{B_1} \left( \int_{B_{2}} |x-y|^{2s-n} \, dx\right)\, dy\\
       \leq &\;  c_n \int_{B_1} \bigg(\int_0^{2+|y|} t^{2s-1}  \, dt \bigg) \, dy  \\
       =&\; c_{n,s} \int_0^1 (2+t)^{2s} t^{n-1}\, dt  = \bar c_{n,s}.
   \end{split} \]
For $x$ large, we use that $|x-y|\geq |x|-|y|$ and $1+|x|^{n+2s}>|x|^{n+2s}$, thus
  \[\begin{split}
    \int_{B_1} \bigg(\int_{\Rn \setminus B_2}\frac{ |x-y|^{2s-n}}{1+|x|^{n+2s}} \, dx \bigg) \, dy \leq &\; \int_{B_1} \bigg(\int_{\Rn \setminus B_2} (|x|-|y|)^{2s-n} |x|^{-n-2s} \, dx \bigg) \, dy\\
    =&\; c_n \int_0^1 t^{n-1}\bigg(\int_2^{\infty} (\rho-t)^{2s-n} \rho^{-n-2s} \rho^{n-1} \, d\rho \bigg)\, dt \\
    \leq &\;c_n \int_2^{\infty}  (\rho-1)^{2s-n-1} \, d\rho = c_{n,s}.
   \end{split} \]
   Hence for $n>2s$ the quantity $c_{n,s,R}$ in \eqref{intppr} is finite.	
Meanwhile, for $n<2s$ for $x$ small the same bound as for $n>2s$ holds. For $x$ large, we have that
  \[\begin{split}
    \int_{B_1} \bigg(\int_{\R \setminus B_2}\frac{ |x-y|^{2s-1}}{1+|x|^{1+2s}} \, dx \bigg) \, dy \leq &\; \int_{B_1} \bigg(\int_{\R \setminus B_2} (|x|+|y|)^{2s-1} |x|^{-1-2s} \, dx \bigg) \, dy\\
    =&\; c  \int_0^1 \bigg(\int_2^{\infty} (\rho+t)^{2s-1} \rho^{-1-2s}  \, d\rho \bigg)\, dt  = c_s.\\
     \end{split} \]
In the case $n=2s$ from the triangle inequality we have that
	\[ \int_{B_1} \bigg( \int_{B_2} \frac {\log |x-y|}{1+|x|^2} \, dx \bigg) \, dy  \leq  c  \int_0^2 \log(t+1) \, dt = \tilde c\]
	and
	\[\begin{split}
	\int_{B_1} \bigg(  \int_{\R \setminus B_2}  \frac {\log |x-y|}{1+|x|^2} \, dx \bigg) \, dy   \leq  \int_2^{\infty} \log(t+1) t^{-2}\, dt =\tilde c.
		\end{split}\]
Hence $c_{n,s,R}$ in \eqref{intppr} is finite and we have that
	\begin{equation} \int_{\Rn} \frac{|f*\Phi(x)(x)|}{1+|x|^{n+2s} }\, dx \leq  c_{n,s,R}\|f\|_{L^\infty(\Rn)} \label{db2} .\end{equation} It follows that $f*\Phi\in L_s^1(\Rn)$, as stated.
	\end{proof}
	
Before continuing with the main result of this section, we introduce the following lemma, that will be the main ingredient in the proof of the upcoming Theorem \ref{theorem:poissonsolution}.
\begin{lemma}\label{lem:uff2}
Let  $f \in  C^{\infty}_c(\Rn) $, let $\varphi $ be an arbitrary function such that we have $\wck \varphi \in \Sa_s(\Rn)$ and the following hold:\\
a) for $n>2s$, $\varphi \in L^1(\Rn)\cap C(\Rn)$, \\
b) for $n\leq 2s$, $\varphi \in L^1(\R)\cap C(\R) \cap C^1\big( (-\infty,0)\cup (0,\infty)\big))$ and
	 \begin{equation*} \begin{aligned}
	 &|\varphi(x)|\leq c_1|x|^{2s} \quad &\text{ for } & x \in \R\\
	   & |\varphi(x)|\leq \frac{c_2}{|x|} \quad &\text{ for } &|x|>1 \\
	    &|\varphi'(x)|\leq c'_1|x|^{2s-1} \quad &\text{ for } &0< |x|\leq 1\\
	  &  |\varphi'(x)|\leq \frac{c'_2}{|x|} \quad &\text{ for } &|x|>1. \end{aligned} \end{equation*}
	  Then	
 \begin{equation} \label{disf2} \int_{\Rn} f*\Phi(x) \wck \varphi (x) \, dx = \int_{\Rn} ({2\pi}|x|)^{-2s} \wck f(x) \varphi(x) \, dx.\end{equation}
\end{lemma}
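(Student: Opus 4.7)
The plan is to reduce \eqref{disf2} to Proposition \ref{proposition:ansss} via two applications of Fubini and a translation trick that turns $\varphi$ into a suitably modulated test function. First, expanding the convolution and exchanging the order of integration gives
\[ \int_{\Rn} f*\Phi(x)\, \wck\varphi(x)\, dx \;=\; \int_{\Rn} f(y) \left(\int_{\Rn} \Phi(x-y)\, \wck\varphi(x)\, dx \right) dy. \]
The substitution $x \mapsto x+y$ in the inner integral yields $\int_{\Rn} \Phi(x)\, \wck\varphi(x+y)\, dx$. The crucial observation is that $\wck\varphi(x+y)=\wck{g_y}(x)$ where $g_y(\xi):=\varphi(\xi)\, e^{2\pi i\, y\cdot \xi}$, so the inner integral is now precisely of the form handled by Proposition \ref{proposition:ansss}.

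Applying that Proposition to $g_y$ for each fixed $y$ gives
\[ \int_{\Rn} \Phi(x)\, \wck\varphi(x+y)\, dx \;=\; \int_{\Rn} (2\pi|\xi|)^{-2s}\, \varphi(\xi)\, e^{2\pi i\, y\cdot \xi}\, d\xi. \]
Inserting this back and swapping the $y$ and $\xi$ integrals by Fubini, the $y$-integral produces $\int_{\Rn} f(y)\, e^{2\pi i\, y\cdot \xi}\, dy = \wck f(\xi)$, which yields the desired right-hand side of \eqref{disf2}.

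The main obstacle is the verification that $g_y$ is admissible in Proposition \ref{proposition:ansss} and that the two Fubini applications are legitimate. For admissibility: $|g_y|=|\varphi|$ so all pointwise bounds on $|g_y|$ and the hypothesis $g_y\in L^1\cap C$ (case $n>2s$) are inherited; in the case $n\leq 2s$, the product rule gives $|g_y'(\xi)|\leq |\varphi'(\xi)|+2\pi|y|\,|\varphi(\xi)|$, which, combined with the bounds on $\varphi$ and $\varphi'$ in the hypothesis, yields the derivative bounds in \eqref{condffff} with constants depending on $|y|$ (this $y$-dependence is harmless since the Proposition is applied for each fixed $y$). Moreover, $\wck{g_y}(\cdot)=\wck\varphi(\cdot+y)$ is a translate of $\wck\varphi$, and the seminorms $[\cdot]^\alpha_{\Sa_s(\Rn)}$ are controlled (up to $y$-dependent constants) under translation because $1+|x-y|^{n+2s}\asymp 1+|x|^{n+2s}$ for fixed $y$; thus $\wck{g_y}\in\Sa_s(\Rn)$.

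For the first Fubini step, the estimate $|\wck\varphi(x)|\leq [\wck\varphi]^0_{\Sa_s(\Rn)} (1+|x|^{n+2s})^{-1}$ combined with the finiteness of $\int_{\mathrm{supp}\, f}\int_{\Rn} \Phi(x-y)/(1+|x|^{n+2s})\, dx\, dy$ (which is exactly the quantity $c_{n,s,R}$ shown to be finite in the proof of Lemma \ref{lem:uff1}) makes the integrand absolutely integrable on the product space. The second Fubini is immediate, since $f$ is smooth and compactly supported while $(2\pi|\xi|)^{-2s}\varphi(\xi)$ is integrable (as already established in the proof of Proposition \ref{proposition:ansss}), so the integrand is dominated by $\|f\|_{L^\infty} \mathbf{1}_{\mathrm{supp}\,f}(y)\, (2\pi|\xi|)^{-2s}|\varphi(\xi)|$.
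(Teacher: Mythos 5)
Your argument is correct, and while it rests on the same two pillars as the paper's proof --- Fubini--Tonelli and Proposition~\ref{proposition:ansss} --- the decomposition of the iterated integral is genuinely different. The paper expands the convolution so that $\Phi$ carries the free variable, arriving at $\int_{\Rn}\Phi(y)\,(f\bar*\wck\varphi)(y)\,dy$; it then identifies $f\bar*\wck\varphi=\wck\psi$ with $\psi:=\wck f\cdot\varphi$ and applies the Proposition once to $\psi$, so the bulk of the work is to verify that $f\bar*\wck\varphi\in\Sa_s(\Rn)$ and that the product $\wck f\cdot\varphi$ satisfies \eqref{condffff}. You instead pull the compactly supported factor $f$ outside, so the inner $\Phi$-integral becomes $\int_{\Rn}\Phi(x)\wck\varphi(x+y)\,dx$ with $\wck\varphi(\cdot+y)=\wck{g_y}$ a modulation $g_y=\varphi\,e^{2\pi i y\cdot}$ of $\varphi$. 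The admissibility check is then lighter: $|g_y|=|\varphi|$, the derivative bound $|g_y'|\leq|\varphi'|+2\pi|y|\,|\varphi|$ inherits \eqref{condffff} with $y$-dependent constants, and $\wck{g_y}\in\Sa_s(\Rn)$ follows from translation invariance of the weighted seminorms (again up to $y$-dependent constants --- harmless, as you note, since the Proposition is invoked at fixed $y$). The price is a parametric family of applications of Proposition~\ref{proposition:ansss} plus one extra Fubini to recover $\wck f(\xi)$, which is immediate since $f\in C^\infty_c(\Rn)$. Two small remarks worth making explicit in a write-up: you are applying Proposition~\ref{proposition:ansss} to the complex-valued $g_y$, and one should observe that its proof (Parseval in case (a), the cosine/sine manipulation of $\wck f(x)+\wck f(-x)$ in case (b)) goes through unchanged for complex-valued $f$; and in your first Fubini the quantity $c_{n,s,R}$ from Lemma~\ref{lem:uff1} should be read with $|\Phi|$ in place of $\Phi$ (the estimates there do control the absolute integral, but since $\Phi$ changes sign when $n=2s$ this deserves a word).
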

\begin{proof}
In order to prove identity \eqref{disf2} we notice that by the Fubini-Tonelli theorem we have that
	\[\begin{split}
		\int_{\Rn} f*\Phi(x) \wck \varphi(x) dx =&\; \int_{\Rn} \bigg( \int_{\Rn} \Phi(y) f(x-y) \, dy\bigg) \, \wck \varphi(x)\, dx \\
		= &\;\int_{\Rn} \Phi(y)\bigg( \int_{\Rn} f(x-y) \wck\varphi(x)\, dx \bigg) \, dy.  \end{split}\]
		We denote
		\[ f\bar*\wck \varphi (y):=\int_{\Rn} f(x-y)\wck \varphi(x) \, dx =\int_{\Rn} f(x)\wck \varphi(x+y)\, dx\]
		and write \begin{equation} \label{fbarwck1} \int_{\Rn} f*\Phi(x) \wck \varphi(x) dx  =\int_{\Rn} \Phi(y) f\bar*\wck\varphi(y) \, dy.\end{equation}
The operation $\bar* $ is well defined for $f \in C^{\infty}_c(\Rn)$ and $\wck \varphi \in \Sa_s(\Rn)$, furthermore it is easy to see that
	\[ \F(f\bar *\wck\varphi)(x) = \wck f (x)  \varphi (x).  \] We notice at first that since $\varphi$ and $\wck \varphi$ are continuous, $\F(\wck \varphi)=\varphi$ on $\Rn$. We define
		\begin{equation}\label{psifi2}  \psi(x) := \F(f\bar *\wck\varphi)(x) = \wck f (x)  \varphi (x)\end{equation} and we write \eqref{fbarwck1} as
		\begin{equation}\label{psifi1}  \int_{\Rn} f*\Phi(x) \wck \varphi(x) dx  =\int_{\Rn} \Phi(y) \wck \psi(y) \, dy .\end{equation}
		To apply Proposition \ref{proposition:ansss}, we have to check that $\psi$ verifies \eqref{condffff}.
Since \[ \int_{\Rn} |\psi(x)|\, dx = \int_{\Rn} |\wck f(x)| |\varphi(x)| \, dx\leq \|\wck f\|_{L^{\infty}(\Rn)} \|\varphi\|_{L^1(\Rn)},\] we have that $\psi \in L^1(\Rn)$. Also, $\psi \in C(\Rn)$ as a product of continuous functions.  We claim that $f\bar*\wck\varphi \in \Sa_s(\Rn)$. Indeed, suppose $\text{supp}\, f\subseteq B_R$ for $R>0$. We remark that in the next computations the constants may change from line to line. Then for $|x|\leq 2R$ we have that	
	\[ \begin{split}
			(1+|x|^{n+2s} ) |f\bar*\wck\varphi(x)| \leq &\; c_{n,s,R} \int_{B_{R}(x)} |f(y-x) \wck\varphi(y)|\, dy \\
			\leq &\;  c_{n,s,R} \|f\|_{L^\infty(B_R)} \|\wck \varphi\|_{L^{\infty}(B_{3R})}.
			\end{split}\]
For $|x|>2R$ we have that
	\[ \begin{split} |x|^{n+2s} |f\bar*\wck\varphi(x)|\leq\|f\|_{L^\infty(B_R)} [\wck \varphi]_{\Sa_s(\Rn)}^0 |x|^{n+2s} \int_{B_{R}  } |x+y|^{-n-2s} \, dy
	\end{split}\]
and we remark that $|y| \leq |x| /2$ (otherwise $y\notin \text{supp} \,f$). Then we use the bound $|x+y|\geq |x|-|y|\geq |x|/2$ and we have that
	\[
	 |x|^{n+2s} |f\bar*\wck\varphi(x)|\leq\|f\|_{\infty} [\wck \varphi]_{\Sa_s(\Rn)}^0  |x|^{n+2s}  \int_{ B_{R} }  |x|^{-n-2s}\, dy =c_{n,s,R}.
	\]
	We can iterate the same method to prove that $(1+|x|^{n+2s} ) |D^{\alpha} f\bar*\wck\varphi(x)|$ is bounded since $D^{\alpha} f\bar*\wck\varphi(x)=f\bar*D^{\alpha} \wck\varphi(x)$ and $D^{\alpha} \wck\varphi \in \Sa_s(\Rn)$.
	For $n\leq 2s$ we have that 	
\[\begin{aligned}
	 		&  |\psi(x)| \leq | \wck f(x)||\varphi(x)| \leq \|\wck f\|_{L^{\infty}(\R)} c_1|x|^{2s} \quad &\text{ for } &|x| \leq 1,\\
	 		&  |\psi(x)| \leq | \wck f(x)||\varphi(x)| \leq \|\wck f\|_{L^{\infty}(\R)} \frac{c_2}{|x|} \quad &\text{ for } &|x| > 1 .\end{aligned}\]
	 		Moreover, for $|x|>1$
	 		\[\begin{split}
	 		|\psi'(x)| \leq&\; | \wck f(x)||\varphi'(x)| + \Big|\frac{d}{dx} \wck f (x) \varphi(x)\Big|\\
	 		 \leq &\;\|\wck f\|_{L^{\infty}(\R)} \frac{c'_2}{|x|} + \Big| \int  f (\xi) (i\xi)e^{ix\xi} \, d\xi\Big| |\varphi(x)|\\
	 		 \leq&\; \|\wck f\|_{L^{\infty}(\R)} \frac{c'_2}{|x|} + \|\xi f(\xi)\|_{L^1(\R)} \frac{c_2}{|x|} 	 \leq\frac{C}{|x|}
	 		\end{split}\]
and for $|x|\leq 1$, since $f\in C_c^{\infty}(\R)$
 	\[\begin{split}
	 		|\psi'(x)| \leq&\; | \wck f(x)||\varphi'(x)| + \Big|\frac{d}{dx} \wck f (x)\Big| | \varphi(x)|\\
	 					\leq&\; \|\wck f\|_{L^{\infty}(\R)} c_1' |x|^{2s-1} + c_1|x|^{2s} \|\xi f(\xi)\|_{L^{\infty}(\R)} |x|^{-1}
	 					= C|x|^{2s-1}.
\end{split}\]
Hence $\psi$  satisfies \eqref{condffff}. Taking into account \eqref{psifi1} and applying Proposition \ref{proposition:ansss} we have that
			\[ \int_{\Rn} f*\Phi(x)\wck \varphi(x)\, dx = \int_{\Rn} \Phi (x) \wck \psi (x) \, dx =\int_{\Rn} ({2\pi}|x|)^{-2s} \psi (x)  \, dx,\]
			and from \eqref{psifi2} we conclude that
				\[  \int_{\Rn} f*\Phi(x)\wck \varphi(x)\, dx=\int_{\Rn} ({2\pi}|x|)^{-2s}  \wck f (x) \varphi(x)\, dx. \qedhere\]
\end{proof}
The function $\Phi$ gives the representation formula for equation $\frlap u=f$ both in the distributional sense and pointwise.

\begin{theorem}
\label{theorem:poissonsolution}
 Let $f \in  {C^{2s+\eee}_c(\Rn)} $ and let $u$  be defined as
	\[  u(x):=\Phi*f\, (x) .\]
Then $ u\in L_s^1(\Rn)$ and in the distributional sense
	\[ \frlap u =f. \]
	Moreover, $u  \in C^{2s+\eee}(\Rn)$ and pointwise in $\Rn$ \[ \frlap u(x)=f(x).\]
\end{theorem}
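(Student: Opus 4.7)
The plan is to address the three assertions (integrability, distributional identity, pointwise identity with regularity) in sequence. First, $u = f * \Phi \in L^1_s(\Rn)$ is immediate from Lemma \ref{lem:uff1}, since $f \in C^{2s+\eee}_c(\Rn) \subset C_c(\Rn)$.

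For the distributional equation $\frlap u = f$, I would test against an arbitrary $\phi \in \Sa(\Rn)$. Setting $\varphi(\xi) := (2\pi|\xi|)^{2s}\widehat\phi(\xi)$, the identity \eqref{frlaphdef} gives $\frlap\phi = \wck\varphi$, and the proof of Theorem \ref{theorem:thm3} already verifies that such $\varphi$ meets the hypotheses \eqref{condffff} and that $\wck\varphi \in \Sa_s(\Rn)$. Since Lemma \ref{lem:uff2} is stated only for $C^\infty_c$ source terms while we only have $f \in C^{2s+\eee}_c$, I would first pass to a mollified sequence $f_\eta \in C^\infty_c(\Rn)$ with $f_\eta \to f$ uniformly and with uniformly bounded supports. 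Applying Lemma \ref{lem:uff2} to each $f_\eta$ gives
\[
\int (\Phi * f_\eta)\wck\varphi\,dx = \int (2\pi|x|)^{-2s}\wck{f_\eta}(x)\,\varphi(x)\,dx = \int \wck{f_\eta}(x)\,\widehat\phi(x)\,dx = \int f_\eta(x)\,\phi(x)\,dx,
\]
where the last step uses the transpose identity $\int \wck g\, h = \int g\, \wck h$ applied with $g=f_\eta$ and $h=\widehat\phi$. Letting $\eta \to 0$, the uniform $L^1_s$-bound \eqref{db2} on $\Phi * f_\eta$ together with dominated convergence allow me to pass to the limit on both sides, obtaining $<u, \frlap\phi>_s = \int f\,\phi\,dx$, which is exactly \eqref{disf1}.

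For the pointwise claim I would first show $u \in C^{2s+\eee}(\Rn)$. Writing $u(x) = \int_{\Rn} \Phi(z)\, f(x-z)\,dz$ and exploiting the local integrability of $\Phi$ together with the compact support and Hölder regularity of $f$, one can differentiate under the integral up to the order permitted by $f$, and the top-order Hölder seminorm of $u$ follows by splitting the convolution into near- and far-singularity contributions. Combined with $u \in L^1_s$, Remark \ref{blacaz} then guarantees that $\frlap u(x)$ is pointwise well-defined. To identify it with $f(x)$, I would substitute $u = \Phi * f$ into the symmetric form \eqref{frlap2def}, using
\[
2u(x) - u(x+y) - u(x-y) = \int_{\Rn} \Phi(z)\bigl[2f(x-z) - f(x+y-z) - f(x-y-z)\bigr] dz,
\]
and then swap the order of integration in the resulting expression for $\frlap u(x)$. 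The swap is justified by Fubini since the double integral converges absolutely: for $|y|$ small the Hölder bound $|2f(x-z) - f(x+y-z) - f(x-y-z)| \le C|y|^{2s+\eee}$ compensates the $|y|^{-n-2s}$ factor with an $|y|^{\eee}$ surplus; for $|y|$ large the $L^\infty$ bound on $f$ combined with the decay $|y|^{-n-2s}$ is sufficient; and the $z$-integral is controlled by the compact support of $f$ together with the local integrability of $\Phi$. The upshot is $\frlap u(x) = (\Phi * \frlap f)(x)$, and the Fourier identification $\F(\Phi * \frlap f)(\xi) = (2\pi|\xi|)^{-2s}(2\pi|\xi|)^{2s}\widehat f(\xi) = \widehat f(\xi)$ then gives $\frlap u(x) = f(x)$ pointwise.

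The main obstacle is the pointwise part: both the $C^{2s+\eee}$ regularity of $u$ and the Fubini justification demand careful treatment of the singular and growth behavior of $\Phi$, with separate analyses for the three regimes $n > 2s$, $n = 2s$, $n < 2s$ inherited from the definition \eqref{fundsolution}.
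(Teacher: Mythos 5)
Your handling of the first two assertions matches the paper. The $L^1_s$ membership follows from Lemma \ref{lem:uff1} exactly as you say, and for the distributional identity your mollification scheme (approximate $f$ uniformly by $f_\eta\in C^\infty_c$, apply Lemma \ref{lem:uff2} to each $f_\eta$, pass to the limit via \eqref{db1} and \eqref{db2}) is essentially the paper's argument, modulo relabelling.

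The pointwise part is where your route diverges, and there is a genuine gap in the final step. Writing $\frlap u$ through the symmetric formula \eqref{frlap2def} and exchanging the $y$- and $z$-integrals to get $\frlap u(x) = (\Phi * \frlap f)(x)$ is a reasonable move (though your justification of absolute convergence is stated loosely: the $z$-integration domain shifts with $y$, so one must estimate $\int_{B_R(x\pm y)}|\Phi(z)|\,dz$ as $|y|\to\infty$ separately in each of the three regimes $n>2s$, $n=2s$, $n<2s$ before concluding the double integral converges — it does, but it requires the decay/growth of $\Phi$ to be tracked against $|y|^{-n-2s}$, not just ``compact support plus local integrability''). The real problem is the concluding ``Fourier identification'' $\F(\Phi*\frlap f)(\xi)=(2\pi|\xi|)^{-2s}(2\pi|\xi|)^{2s}\widehat f(\xi)=\widehat f(\xi)$. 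Neither $\Phi$ nor $\Phi*\frlap f$ lies in $L^1$, and the relation $\widehat\Phi=(2\pi|\xi|)^{-2s}$ holds only in the distributional sense established in Proposition \ref{proposition:ansss}. In fact, the claimed pointwise identity $\Phi*\frlap f=f$ is precisely Corollary \ref{lemma:unouno}, whose proof in the paper relies on Theorem \ref{theorem:poissonsolution} itself, so invoking it here is circular. To close the argument rigorously you would need to fold in the distributional result you just proved: observe that $\Phi*\frlap f$ is a continuous function, that $\frlap u = f$ distributionally, and that (after an integration-by-parts swap) $\int\frlap u\,\varphi = \int f\,\varphi$ for every $\varphi\in C^\infty_c$, whence the two continuous functions agree everywhere. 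But that is exactly the paper's pointwise argument, which also needs $u\in C^{2s+\eee}(\Rn)$ (the paper cites Theorem 9.3 in \cite{Zygmund} for this; your plan to ``differentiate under the integral'' is not quite the right framing when $2s+\eee<1$, where the needed conclusion is directly a H\"older bound on the convolution rather than a derivative) and the continuity of $x\mapsto\frlap u(x)$ (Proposition 2.1.7 of \cite{Silvestre}). So either your Fubini computation should be discarded in favour of the duality argument, or, if you keep the Fubini step, you still must finish through the distributional equality rather than a naive Fourier multiplication.
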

\begin{proof}

From Lemma \ref{lem:uff1}, we have that $u\in L_s^1(\Rn)$. We prove at first the statement for $f\in C^{\infty}_c(\Rn)$.

We notice that for $\varphi \in \mathcal{S}(\Rn)$, the function $\mathcal F^{-1} ((2\pi |\xi)|^{2s} \widehat \varphi (\xi)) \in \mathcal{S}_s(\Rn)$ and $(2\pi |\xi)|^{2s} \widehat \varphi (\xi)$ satisfies the hypothesis of Lemma \ref{lem:uff2}. Hence, by \eqref{disf2}
	\begin{equation*}
		\begin{split}
	<u(x), \frlap \varphi>_s =&\;\int_{\Rn}  f*\Phi(x) \mathcal{F}^{-1} \Big((2\pi |\xi|)^{2s}\widehat \varphi(\xi) \Big)(x) \, dx\\
		=&\; \int_{\Rn} \wck{f}(\xi) {\widehat \varphi(\xi)} \, d\xi=  \int_{\Rn}  f(x) \varphi(x) \, dx.
	\end{split}
	\end{equation*}
	The last equality follows since $\wck f \in L^1(\Rn)$, which is assured by the infinite differentiability of $f$.
	We conclude that $u$ is the distributional solution of \[\frlap u = f.\]

We consider now $f\in C^{2s+\eee}_c(\Rn) $.
We take a sequence of functions $(f_k)_k \in C^{\infty}_c (\Rn)$ such that $\|f_k-f\|_{L^{\infty}(\Rn)} \underset{k\to \infty}{\longrightarrow} 0$ and we consider $u_k=\Phi*f_k$. Then we have that for any $\varphi \in \Sa(\Rn)$
	\[ <u_k, \frlap \varphi>_s= \int_{\Rn} f_k (x) \varphi(x)\, dx.\] By definition of $f_k$ \[ \lim_{k \to +\infty} \int_{\Rn} f_k(x)\varphi(x)\, dx = \int_{\Rn} f(x)\varphi(x)\, dx ,\]
	moreover, using \eqref{db1} and \eqref{db2} we have that
		\[\begin{split}
		 <u_k -u,\frlap \varphi>_s \leq&\; [\frlap \varphi]^{0}_{\Sa_s(\Rn)}\|u_k-u\|_{L_s^1(\Rn)}\\
		  \leq&\;  c_{n,s,R}[\frlap \varphi]^{0}_{\Sa_s(\Rn)}\|f_k-f\|_{L^{\infty}(\Rn)} \underset{k \to \infty}{\longrightarrow} 0.
		 \end{split}\]
	We thus obtain that for any $\varphi \in \Sa(\Rn)$ \[<u,\frlap \varphi>_s =\int_{\Rn} f(x)\varphi(x)\, dx.\] Hence in the distributional sense $\frlap u = f $ on $\Rn$ for any $f\in C^{2s+\eee}_c(\Rn)$.
	{To obtain the pointwise solution, we notice that thanks to Theorem 9.3 in \cite{Zygmund} we have that $u\in C^{2s+\eee}(\Rn)$. This, together with the fact that $u\in L_s^1(\mathbb{R}^n)$ implies that the fractional Laplacian of $u$ is well defined, according to Remark  \ref{blacaz}}. Moreover, from the continuity of the mapping $ \Rn \ni x \mapsto \frlap u(x) $, according to Proposition 2.1.7 from \cite{Silvestre}, we have that
		$  \int_{\Rn}\frlap u(x) \varphi(x)\, dx$ is well defined. For any $\varphi \in C^{\infty}_c(\Rn)$ we have that
		\[\int_{\Rn} u(x)\frlap \varphi(x)\, dx = \int_{\Rn} f(x)\varphi(x)\, dx .\]  Thanks to Fubini-Tonelli's Theorem and changing variables we obtain that for any $\varphi \in C^\infty_c(\Rn)$
			\[ \int_{\Rn} f(x)\varphi(x)\, dx=\int_{\Rn} u(x)\frlap \varphi(x)\, dx = \int_{\Rn} \frlap u(x)\varphi(x) \, dx  .\]  Since both $f$ and $\frlap u$ are continuous, we conclude that pointwise in $\Rn$ \[ \frlap  u(x) = f(x).\qedhere \]
 \end{proof}

{As a corollary, we have a representation formula for a $C^{\infty}_c ({\Rn})$ function.}
\begin{corollary}
\label{lemma:unouno}
For any $f \in C^{\infty}_c ({\Rn})$ there exists a function $\varphi \in C^{\infty}({\Rn})$ such that 		
	\[f(x)=\varphi* \Phi(x), \]  and $\varphi (x)=\mathcal{O} (|x|^{-n-2s})$ as $|x| \rightarrow \infty$.
\end{corollary}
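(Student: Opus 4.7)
My plan is to take $\varphi := \frlap f$ and verify the three required properties: smoothness, the decay $\mathcal{O}(|x|^{-n-2s})$, and the identity $\varphi * \Phi = f$. Smoothness is immediate: since $f\in C^{\infty}_c(\Rn) \subset \Sa(\Rn)$, the commutation relation $\partial_{x_i} \frlap f = \frlap \partial_{x_i} f$ (established in the discussion preceding the definition of distributional solutions in the introduction) iterates to give $\varphi \in C^{\infty}(\Rn)$. The decay bound $|\varphi(x)| \leq c_{n,s}|x|^{-n-2s}$ for large $|x|$ is precisely the estimate \eqref{frb1} applied to $f$.

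For the representation formula $f = \varphi * \Phi$, I would feed the distributional identity $\frlap \Phi = \delta_0$ from Theorem \ref{theorem:thm3} a shifted and reflected test function. Fix $x \in \Rn$ and set $\psi_x(y) := f(x-y)$; since translation and reflection preserve the Schwartz class, $\psi_x \in \Sa(\Rn)$. Using the symmetric expression \eqref{frlap2def} and the change of variable $w = x-y$ inside the integral yields $\frlap \psi_x(y) = (\frlap f)(x-y) = \varphi(x-y)$. Since $\Phi \in L_s^1(\Rn) \subset \Sa_s'(\Rn)$ and $\frlap \psi_x \in \Sa_s(\Rn)$, the pairing $\langle \Phi, \frlap \psi_x \rangle_s$ coincides with the genuine integral $\int_{\Rn} \Phi(y)\, \varphi(x-y)\, dy$ by \eqref{db1}. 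The distributional definition \eqref{disf1} of $\frlap \Phi = \delta_0$ then reads
\begin{equation*} f(x) = \psi_x(0) = \langle \Phi, \frlap \psi_x \rangle_s = \int_{\Rn} \Phi(y)\, \varphi(x-y)\, dy = (\varphi * \Phi)(x). \end{equation*}

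Before closing, I would briefly check that the convolution is absolutely convergent for every $x$: near $y=0$, $\Phi$ has an integrable singularity (for $n>2s$, $|y|^{2s-n}$ is locally integrable because $2s>0$; for $n\leq 2s$, $\Phi$ is locally bounded or logarithmic), while at infinity the $|y|^{-n-2s}$ decay of $\varphi(x-y)$ dominates the at-most polynomial growth of $\Phi$ of strictly smaller order. The only mildly delicate step is the identity $\frlap \psi_x(y) = \varphi(x-y)$, but this is nothing more than translation and reflection invariance of the principal value integral \eqref{frlap2def} under $z \mapsto x-z$ and costs two lines. I do not foresee a conceptual obstacle: morally, the corollary is the distributional statement \emph{``$\Phi$ is the fundamental solution''} made pointwise by exploiting the smoothness and rapid decay on both sides of the convolution.
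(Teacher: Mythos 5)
Your proof is correct, but it takes a genuinely different route from the paper's for the key identity $\varphi * \Phi = f$. The paper's proof defines $\varphi := \frlap f$, cites \eqref{frb1} for the decay and the derivative-commutation for smoothness exactly as you do, but then concludes $\frlap f * \Phi = f$ ``by Theorem \ref{theorem:poissonsolution}''. That theorem actually states $\frlap(\Phi * f) = f$, so the paper is tacitly relying on the commutation $(\frlap f) * \Phi = \frlap(\Phi * f)$, which is never made explicit. Your approach sidesteps this entirely: you go straight to the distributional statement $\frlap \Phi = \delta_0$ of Theorem \ref{theorem:thm3}, test it against the shifted reflection $\psi_x(y) = f(x-y) \in \Sa(\Rn)$, use translation/reflection invariance of \eqref{frlap2def} to get $\frlap \psi_x(y) = \varphi(x-y)$, and read off $f(x) = \langle \Phi, \frlap\psi_x\rangle_s = (\varphi*\Phi)(x)$, with \eqref{db1} guaranteeing the pairing is a genuine integral. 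What this buys you is a shorter and more self-contained argument: you need only Theorem \ref{theorem:thm3}, not the heavier Theorem \ref{theorem:poissonsolution} machinery (Lemmas \ref{lem:uff1}--\ref{lem:uff2} and the approximation scheme), and you avoid the unstated commutation. The trade-off is cosmetic: the paper's ordering places the corollary right after Theorem \ref{theorem:poissonsolution} and treats it as a quick consequence, whereas your argument would naturally sit right after Theorem \ref{theorem:thm3}. Both are valid; yours is arguably the more transparent of the two.
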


\begin{proof}[Proof]
For $f \in C^{\infty}_c(\Rn)$, we define $\varphi$ as
	\[ \varphi (x):=\frlap f(x).\]
The bound established in \eqref{frb1} assures the asymptotic behavior of~$\varphi$, while it is not hard to see that $\varphi \in C^{\infty}({\Rn})$.
Then by using Theorem \ref{theorem:poissonsolution} we have that pointwise in $\Rn$
	\[\varphi*\Phi(x)= \frlap f *\Phi(x)=  f(x).\qedhere\]

\end{proof}

\subsection{The Poisson kernel} \label{thepoissonkernel}

We claim that $P_r$ plays the role of the fractional Poisson kernel. Indeed, the function $P_r$ arises in the construction of the solution to Dirichlet problem with vanishing Laplacian inside the ball and a known forcing term outside the ball, as stated in the next Theorem~\ref{theorem:DPL}.
\begin{theorem}
\label{theorem:DPL}
 Let $r>0$, $g \in L^1_s(\Rn) \cap C({\Rn})$ and let
\begin{equation}
		 u_g(x) : =
			\begin{cases}	
				\displaystyle  \int_{{\Rn}\setminus B_r} P_r(y,x) g(y)\, dy &\quad  \, \text{if } x\in B_r, \\
				g(x) &\quad \, \text{if } x \in {\obal}.
			\end{cases} \label{solD}
	\end{equation}
Then $u_g$ is the unique pointwise continuous solution of the problem \eqref{LaplaceeqD}
	\begin{equation*}
	\begin{cases}
	\frlap u= 0 \qquad  &\mbox{ in }  {B_r},
\\	u= g \qquad  &\mbox{ in }  {\obal}.
		\end{cases}
	\end{equation*}
\end{theorem}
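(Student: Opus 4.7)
The plan splits into three parts: (i) $u_g$ is well defined, finite, and continuous on all of $\Rn$ including across $\partial B_r$; (ii) $\frlap u_g = 0$ pointwise in $B_r$, verified through the $s$-mean value property and Theorem~\ref{theorem:arm}; (iii) uniqueness via a maximum principle based on the representation \eqref{frlap2def}.

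For (i), the integral in \eqref{solD} converges for each $x\in B_r$ because $P_r(y,x)$ has an integrable singularity of order $(|y|^2-r^2)^{-s}$ at $\partial B_r$ (since $s<1$) and decays like $|y|^{-n-2s}$ at infinity, which pairs with the hypothesis $g\in L^1_s(\Rn)$. Continuity for $x$ in the open ball is dominated convergence. Continuity up to $\partial B_r$ from inside requires the normalization $\int_{\obal} P_r(y,x)\,dy = 1$ for every $x\in B_r$, which is an explicit computation handled most cleanly by a Kelvin-type inversion reducing to the concentric case $x=0$, where it becomes the defining identity of the constant $c(n,s)$ in \eqref{ctcns}. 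Once this normalization is available, a standard approximate-identity argument gives $u_g(x)\to g(x_0)$ as $x\to x_0\in\partial B_r$: the factor $(r^2-|x|^2)^s$ forces the Poisson mass of $P_r(\cdot,x)$ to concentrate near $x_0$, while continuity of $g$ controls the residual contribution.

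For (ii), fix $x_0\in B_r$ and $\rho\in(0,r-|x_0|)$ so that $B_\rho(x_0)\subset B_r$. A direct comparison of \eqref{smeandefn} and \eqref{poissondefn} shows $A_\rho(y-x_0) = P_\rho(y,x_0)$: the $s$-mean kernel for the small ball at $x_0$ is precisely its Poisson kernel evaluated at the center. Substituting the definition of $u_g(y)$ for $y\in B_r\setminus\overline{B_\rho(x_0)}$ into $A_\rho * u_g(x_0)$ and exchanging the order of integration (Fubini is justified by the bounds of step (i)), the identity $u_g(x_0) = A_\rho * u_g(x_0)$ reduces, after comparing coefficients against the arbitrary datum $g$, to the pointwise kernel relation
\begin{equation*}
P_r(z,x_0) = A_\rho(z-x_0)\,\chi_{\{|z-x_0|>\rho\}} + \int_{\{\rho<|y-x_0|\}\cap B_r} A_\rho(y-x_0)\,P_r(z,y)\,dy
\end{equation*}
for $z$ with $|z|>r$. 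Once this is verified, Theorem \ref{theorem:arm} gives $\frlap u_g(x_0)=0$ at once. This harmonic-measure compatibility identity is the main obstacle of the proof: it is the one genuine computation and seems to require either an inversion reduction exploiting the conformal features of $P_r$ recorded in the Appendix, or, equivalently, an independent existence-and-uniqueness argument on the small ball $B_\rho(x_0)$ applied recursively.

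For (iii), if $u_1,u_2$ are two continuous pointwise solutions then $v:=u_1-u_2$ is continuous on $\Rn$, vanishes on $\obal$, and satisfies $\frlap v = 0$ in $B_r$. Because $v$ has compact support in $\overline{B_r}$ and is continuous, it attains its global maximum at some $x_\ast\in\overline{B_r}$; if this maximum were strictly positive then $x_\ast\in B_r$ and applying \eqref{frlap2def} at $x_\ast$ would give
\begin{equation*}
0 = \frlap v(x_\ast) = \frac{C(n,s)}{2}\int_{\Rn}\frac{2v(x_\ast)-v(x_\ast+y)-v(x_\ast-y)}{|y|^{n+2s}}\,dy \geq 0,
\end{equation*}
with equality forcing $v\equiv v(x_\ast)>0$ on $\Rn$, contradicting $v\equiv 0$ outside $B_r$. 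The same argument applied to $-v$ yields $v\equiv 0$, completing the proof.
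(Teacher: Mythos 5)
Your plan has the right skeleton and parts (i) and (iii) are essentially sound, but part (ii) contains a genuine gap that you yourself flag and do not close.

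Concretely: after exchanging the order of integration in $A_\rho*u_g(x_0)$ and comparing coefficients of $g$, you arrive at the two-scale kernel identity
\begin{equation*}
P_r(z,x_0) \;=\; A_\rho(z-x_0)\,\chi_{\{|z-x_0|>\rho\}} \;+\; \int_{\{|y-x_0|>\rho\}\cap B_r} A_\rho(y-x_0)\,P_r(z,y)\,dy, \qquad |z|>r,
\end{equation*}
and you explicitly write that it ``seems to require'' an inversion reduction or a recursive existence argument, without carrying either out. This compatibility identity is exactly the content that makes the theorem nontrivial, and the comparison-of-coefficients step on its own would in any case need a density argument to be legitimate. The paper avoids this two-scale identity entirely. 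It first treats $g\in C_c^\infty(\Rn)$ and writes $g=\Phi*\varphi$ by Corollary~\ref{lemma:unouno}. Using \eqref{Ipu} it rewrites $g(y)=\int_{\obal}\Phi(y-t)\psi(t)\,dt$ for $|y|>r$, where $\psi(t)=\varphi(t)+\int_{B_r}P_r(t,z)\varphi(z)\,dz$. It then computes $u_g(x)=\int_{|t|>r}\psi(t)\Phi(x-t)\,dt$ by another application of \eqref{Ipu}, and finally shows $A_\rho*u_g(x)=u_g(x)$ by a third integral interchange and the single-scale identity \eqref{Ifu}, $\int_{\obal}A_\rho(y)\Phi(x-y)\,dy=\Phi(x)$ for $|x|>\rho$. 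All the Kelvin-inversion work is thus concentrated in the one-scale identities \eqref{Ip}, \eqref{Ipu}, \eqref{Ifu} proved in the Appendix; no direct relation between $A_\rho$ and $P_r$ at different radii is ever needed. The general case $g\in L^1_s\cap C$ is then recovered by truncation $g_k=\eta_k g$ and a careful dominated-convergence / uniform-on-compacts argument for $u_{g_k}$ and $A_\rho*u_{g_k}$, which is again something your sketch omits (for $g$ merely continuous and in $L^1_s$, Corollary~\ref{lemma:unouno} is not available directly). You should either prove your compatibility identity (for instance by deriving it from \eqref{Ipu} and \eqref{Ifu}, which is doable but not shorter than the paper's route) or replace part (ii) by the fundamental-solution argument.

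Two smaller remarks. In part (i), your approximate-identity sketch for boundary continuity is in the same spirit as the paper's, which uses the normalization \eqref{Ip}, the bound \eqref{bbly1}, and a split of the domain into $|y-y_0|<\delta_\eee$ and $|y-y_0|\ge\delta_\eee$; what you wrote is plausible but would need those quantitative estimates to be a proof. In part (iii), your direct strong-maximum-principle argument is correct in spirit and arguably more self-contained than the paper's citation of an external maximum principle, but it tacitly assumes that $\frlap v(x_\ast)$ is well defined at the interior maximum, i.e.\ that $v$ has enough interior regularity; this is true because $v$ is $C^\infty$ in $B_r$ (from the smoothness of $P_r$), but you should say so.
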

\begin{proof}[Proof of Theorem \ref{theorem:DPL}]
We see at first that $u_g \in L_s^1(\Rn) $. Take $R>2r$ and $x \in B_r$, then by using \eqref{Ip}, the inequality $|x-y| >|y|-r$ and for $|y|>R$ the bound
	\begin{equation} \label{bbly1}\frac{|y|^{n+2s}}{(|y|^2-r^2)^s|x-y|^n} \leq  2^{n+s}\end{equation}
we have that
	\[ \begin{split} |u_g(x)|\leq &\;  \int_{R>|y|>r} P_r(y,x) |g(y)|\, dy + \int_{|y|>R} P_r(y,x) |g(y)|\, dy\\
							\leq&\; c(n,s) \sup_{y\in \overline B_R\setminus B_r} |g(y)| + 2^{n+s} c(n,s) (r^2-|x|^2)^s \int_{|y|>R}  \frac{|g(y)|}{|y|^{n+2s}}\, dy\\
							\leq&\; c(n,s) \sup_{y\in \overline B_R\setminus B_r} |g(y)| + 2^{n+s} c(n,s) r^{2s}  \int_{|y|>R}  \frac{|g(y)|}{|y|^{n+2s}}\, dy.
							 \end{split} \]
Since $g \in L_s^1(\Rn)$, the last integral is bounded, and so $u_g$ is bounded in $B_r$. It follows that $u_g\in L_s^1(\Rn)$, as stated. Moreover, the local $C^\infty$ regularity of $u_g$ in $B_r$ follows from the regularity of the Poisson kernel.

Let us fix $x \in B_r$ and prove that $u_g$ has the $s$-mean value property in $x$. If this holds, indeed, Theorem \ref{theorem:arm} implies that $\frlap u(x)=0$, and given the arbitrary choice of $x$, the same is true in the whole $B_r$.


We claim that for any $\rho$ such that $0<\rho< r-|x|$ we have
\begin{equation} A_{\rho} * u_g (x) = u_g(x).\label{claim} \end{equation}

Let at first $g$ be in $C^{\infty}_c{(\Rn)} $. By Corollary \ref{lemma:unouno}, there exists a function $\varphi\in C^{\infty}(\Rn)$ such that
	  \[ g(y)= \int_{\Rn} \Phi(z-y) \varphi(z) \, dz \] and at infinity $\varphi (z) = \mathcal{O}(|z|^{-n-2s})$.
For $r>0$ fixed, we write $g$ as
	\begin{equation} \label{bbbb} g(y)=\int_{\obal} \Phi(t-y) \varphi(t) \, dt + \int_{B_r} \Phi(z-y) \varphi(z) \, dz.\end{equation}
Using identity \eqref{Ipu} we have that
	\begin{equation*}
		\begin{split}
	\int_{B_r} \Phi(z-y) \varphi(z) \,  dz  =\;&\int_{B_r} \bigg(\int_{\obal}P_r(t,z)\Phi(y-t) \, dt\bigg)\, \varphi(z) \, dz\\
	= \;&\int_{\obal}\Phi(y-t) \bigg(\int_{B_r}P_r(t,z)\varphi(z)\, dz\bigg)\, dt.
		\end{split}
	\end{equation*}
Therefore, in \eqref{bbbb} it follows that
	 \begin{equation} g(y) = \int_{\obal} \Phi(y-t)\psi(t)\, dt, \label{gsecondcase}\end{equation}
where $\psi(t) = \varphi(t)+\int_{B_r}P_r(t,z)\varphi(z) \, dz$.
In particular, using \eqref{solD} and \eqref{gsecondcase} we have that {
	\begin{equation*} \begin{split}
		 u_g(x)=&\; \int_{|y|>r}P_r(y,x) \bigg(\int_{|t|>r} \Phi(y-t) \psi(t) \, dt\bigg) \, dy\\
		 	= &\; \int_{|t|>r} \psi(t)\bigg( \int_{|y|>r} P_r(y,x) \Phi(y-t) \, dy\bigg) \, dt = \int_{|t|>r} \psi(t) \Phi(x-t) \, dt\end{split}
		 \end{equation*}
thanks to \eqref{Ipu}. Furthermore, we compute
	\begin{equation*} \begin{split}
		A_{\rho}*u_g(x) =&\; \int_{|y|>\rho} A_\rho(y)  \bigg(\int_{|t|>r} \psi(t) \Phi(x-y-t) \, dt \bigg)\, dy  \\
					=&\;    \int_{|t|>r} \psi(t) \bigg(\int_{|y|>\rho} A_\rho(y) \Phi(x-y-t) \, dy\bigg) \, dt.					
	\end{split}
		 \end{equation*}}
Having chosen $\rho\leq r-|x|$ we have that $|x-t|\geq |t|-|x| \geq \rho$ and from \eqref{Ifu} we obtain
 \begin{equation*}
		A_{\rho}*u_g(x) = \int_{|t|>r} \psi(t)  \Phi(x-t)\, dt.
	 \end{equation*}
Consequently $A_{\rho} * u_g (x) =u_g(x),$ thus for $g\in C^{\infty}_c({\Rn})$ the claim \eqref{claim} is proved.\\

We now prove the claim \eqref{claim} for any forcing term $g \in L^1_s({\Rn})\cap C(\Rn)$. In particular, let $\eta_k \in C^{\infty}_c(\Rn)$ be such that $\eta_k(x) \in [0,1]$, $\eta_k=1$ in $B_k$ and $\eta_k=0$ in $B_{k+1}$. Then $g_k:=\eta_k g \in C^{\infty}_c(\Rn)$ and we have that $g_k \underset{k\to \infty}{\longrightarrow} g$ pointwise in $\Rn$, in norm $L_s^1(\Rn)$ and uniformly on compact sets. So, for any $k \geq 0$ the function $u_{g_k}(x)$ has the $s$-mean value property in $x$. Precisely, for any $\rho>0$ small independent of $k$,
	\begin{equation} \big(A_{\rho}*u_{g_k}\big)(x) =u_{g_k}(x). \label{primacosa} \end{equation}
We claim that
	\begin{equation}
		\lim_{k\to \infty} u_{g_k}(x) = u_g(x) \label{puno}
	\end{equation}
and that for any $\rho>0$ small
	\begin{equation}
		\begin{split}
		&\lim_{k\to \infty} \big(A_{\rho}*u_{g_k}\big)(x) = A_{\rho}*u_g (x)  \label{pdue}.
		\end{split}
	\end{equation}
Let $\eee $ be any arbitrarily small quantity. For $k$ large and $R>2r,$ we take advantage of \eqref{bbly1} and obtain that for $x \in B_r$
\[ \begin{split}	
		|u_{g_k} (x) -u_g(x)| \leq &\; \int_{\Rn\setminus B_r} |g_k(y)-g(y)| P_r(y,x)\,dy \\
				\leq&\;  2^{n+s} c(n,s) (r^2-|x|^2)^s  \int_{\Rn \setminus B_R}  \frac{|g_k(y)-g(y)| }{|y|^{n+2s}} \, dy \\
				&\;+ \sup _{y \in \overline B_R\setminus B_r}|g_k(y)-g(y)| \int_{B_R\setminus B_r}  P_r(y,x)\, dy\\	
				\leq &\; c(n,s,r) \int_{\Rn \setminus B_R}  \frac{|g_k(y)-g(y)| }{|y|^{n+2s}} \, dy + \sup _{y \in \overline B_R\setminus B_r}|g_k(y)-g(y)|  \leq \eee \end{split}\]
by the convergence in $L_s^1(\Rn)$ norm, the uniform convergence on compact sets of $g_k$ to $g$ and integrability in $\Rn \setminus B_r$ of the Poisson kernel (by identity ~\eqref{Ip}). Hence, claim \eqref{puno} is proved.			
In order to prove claim \eqref{pdue}, we notice that for any $\rho>0$ small we have that
	\begin{align} \label{illy}
		 | A_{\rho}* u_{g_k}(x)  - A_\rho * u_g(x) |\leq &  \int_{|y|>\rho} A_{\rho}(y)|   u_{g_k}  (x-y)  -u_g(x-y)| \, dy \notag\\
		\leq & \int_{\substack{ {|y|>\rho}\\{|x-y|\geq r}}} A_{\rho}(y) |g_k(x-y)-g(x-y)|  \, dy \notag\\
		&+  \int_{|z|>r}  | g_k(z)-g(z)|    \int_{\substack{ {|y|>\rho}\\ {|x-y|<r}}}    A_{\rho} (y) P_r(z,x-y)  dy \, dz\notag\\
		=&\; I_1+I_2.
	\end{align}
Let $R>2\rho$. Thanks to the bound \eqref{bbly1} for $|y|>R$, the convergence in $L_s^1(\Rn)$ norm, the uniform convergence on compact sets of $g_k$ to $g$ and the integrability in $\Rn \setminus B_{\rho}$ of the $s$-mean kernel (by identity \eqref{Ir}) we have that for $k$ large
\[	\begin{split}
		I_1 =&\; c(n,s) r^{2s} \int_{\substack{ {|y|>\rho}\\{|x-y|\geq r}}} \frac{|g_k(x-y)-g(x-y)|} {(|y|^2-\rho^2)^s |y|^n} \, dy\\
			\leq &\;2^{n+s} c(n,s,r) \int_{|y|>R} \frac{|g_k(x-y) - g(x-y)| }{|y|^{n+2s}}\, dy \\
						&\; + \sup_{y \in \overline B_R\setminus B_\rho} |g_k (x-y)-g(x-y)| \int_{R>|y|>\rho} A_\rho(y) \, dy
						\leq  \frac{\eee}{2}. \end{split} \]
Once more, for $R>2r$ and $|z|>R$ we use the bound \eqref{bbly1} and we have that
\[	\begin{split}
		I_2 = &\; \int_{|z|>R} |g_k(z)-g(z)| \int_{\substack{{|y|>\rho}\\ {|x-y|<r}}} A_\rho(y)P_r(z,x-y)  dy   \, dz \\\
		&\;  +  \int_{R>|z|>r} |g_k(z)-g(z)| \int_{\substack{{|y|>\rho}\\ {|x-y|<r}}} A_\rho(y)P_r(z,x-y)  \, dy \, dz \\
	\leq &\;c(n,s) \int_{\substack{{|y|>\rho}\\ {|x\!-\!y|<r}}} A_\rho(y) (r^2\!-\!|x\!-\!y|^2)^s \!   \int_{|z |>R} \frac{|g_k(z) \!-\! g(z)| }{(|z|^2\! -\!r^2)^s |z\!-\!x\!+\!y|^n}\, dz  \, dy \\
						&\;+  \sup_{z\in \overline B_R\setminus B_r} |g_k (z)-g(z)|  \int_{\substack{{|y|>\rho}\\ {|x-y|<r}}} A_\rho(y)   \int_{R>|z|>r} P_r(z,x-y)  \, dz  \, dy \\
				\leq &\;c(n,s,r)  \int_{|z|>R} \frac{|g_k(z) - g(z)| }{|z|^{n+2s}}dz   + \sup_{z \in \overline B_R\setminus B_r} |g_k (z)-g(z)| 
				\end{split} \]
since by identity \eqref{Ir} and \eqref{Ip}
	\[
	\int_{\substack{{|y|>\rho}\\ {|x-y|<r}}} A_\rho(y)   \int_{R>|z|>r} P_r(z,y-x)  \, dz  \, dy  	\leq 1.
	\]
 Therefore again by the convergence in $L_s^1(\Rn)$ norm, the uniform convergence on compact sets of $g_k$ we have that $I_2 \leq 	\displaystyle \frac{\eee}{2}$. In \eqref{illy} it follows that
\begin{equation*}
		\lim_{k\to \infty} \big(A_{\rho}* u_{g_k}\big)(x) =  A_{\rho}*u_g(x),
	\end{equation*}
thus the desired result \eqref{pdue}.

By \eqref{primacosa}, \eqref{puno} and \eqref{pdue} we have that
\[A_{\rho}*u_g (x) = \lim_{k \to \infty}A_{\rho}* u_{g_k}(x)   = \lim_{k \to \infty} u_{g_k} (x) =u_g(x),\]
thus $u_g$ has the $s$-mean value property at $x$.
This concludes the proof of the claim \eqref{claim}.

We now prove the continuity of $u_g$. Of course, $u_g$ is continuous in $B_r$ and in $\obal$. We need to check the continuity at the boundary of $B_r$.

Let $y_0 \in \partial B_r$ and $\eee>0$ arbitrarily small to be fixed,  $\delta_{\eee}>0$ be such that, if $y \in B_{\delta_{\eee}}(y_0)$ then $|g(y)-g(y_0)| < \eee$. We fix $\mu$ arbitrarily small such that $0 <\mu<  \frac{\delta_{\eee}}{2}$, $R> 2r$, and $x \in  B_r \cap \ B_{\mu} (y_0 )$. Notice that \[ r^2-|x|^2 = (r+|x|) (r-|x|) < 2r |y_0-x| < 2r \mu.\]
From \eqref{Ip} we have that
	\begin{equation}\label{illy1} |u_g(x)-g(y_0)| \leq \int_{\obal} \big|g(y)-g(y_0)\big|P_r(y,x) \, dy .\end{equation}
For $r<|y|<R$ and $|y-y_0|\geq \delta_\eee$ we have that
	$ |x-y| \geq \delta_{\eee} - \mu>  \frac{\delta_{\eee}}{2} .$
Meanwhile, for $|y|>R$ we use the bound \eqref{bbly1}. We have that
	\begin{equation*}
		\begin{split}
		& \int_{\substack{ {|y|>r}\\{|y-y_0|\geq  \delta_{\eee}}}}  |g(y)-g(y_0)|P_r(y,x) \, dy\\
		 \leq\; &   c(n,s,R) \mu^s \bigg( \frac{2^n}{ \delta_{\eee}^{n}} \int_{\substack{ {R>|y|>r}\\{|y-y_0|\geq  \delta_{\eee}}}} \frac{|g(y)|+|g(y_0)|}{ (|y|^2-r^2)^s }\, dy
		 + 2^{n+s} \int_{|y|>R} \frac{|g(y)|+ |g(y_0)|}{ |y|^{n+2s}}\, dy   \bigg) \\
	\leq \; &  c(n,s,R) \mu^s \bigg(\frac{2^n }{\delta_{\eee}^{n}} \overline c(r,R,s,g)
	+ 2^{n+s} \|g\|_{L_s^1(\Rn)}+ \tilde c (g,s, R)  \bigg) \\
		=  &\;C(n,s,R,r,g,\delta_\eee) \mu^s .
		\end{split}
	\end{equation*}
	From this and the fact that	\[\int_{\substack{  {|y|>r}\\{|y-y_0|< \delta_{\eee}}} } |g(y)-g(y_0)|P_r(y,x) \, dy \leq \eee \int_{\obal} P_r(y,x)\, dy =\eee\]
		by \eqref{Ip} and the continuity of $g$, we can pass to the limit in \eqref{illy1}. Sending first $\mu  \rightarrow 0$ and afterwards $\eee \rightarrow 0$ we obtain that
	\[  \lim_{x \rightarrow y_0 }\big(u_g(x)-g(y_0)\big) =0,\] thus the continuity of $u_g$.

{The uniqueness of the solution follows from the Maximum Principle. Indeed, if one takes $u_1$ and $u_2$ two different continuous solutions of the Dirichlet problem, then $u=u_1-u_2$ is a continuous solution to the problem
	 \begin{align*}
		\frlap u(x)  &=0, &  &\text{in } B_r \\
		u(x)& =0, & &\text{in } \Rn \setminus  B_r.
		\end{align*}
By Theorem 3.3.3 in \cite{nonlocal}, the solution $u$ is constant, hence null since it is continuous in $\Rn$ and vanishing outside of $B_r$.} This concludes the proof of the Theorem.
\end{proof}

\section[The Green function for the ball]{The Green function for the ball}\label{green}

The purpose of this section is to prove Theorems~\ref{theorem:thm1} and~\ref{theorem:thm2}.
Theorem~\ref{theorem:thm1} introduces a formula for the Green function on the ball, that is more suitable for applications. In Theorem \ref{theorem:thm2} the solution to the Dirichlet problem with vanishing data outside the ball and a given term inside the ball is built in terms of the Green function. We also compute the normalization constants needed in the formula of the Green function on the ball.

\subsection{Formula of the Green function for the ball}\label{thm1}
This subsection focuses on the proof of Theorem \ref{theorem:thm1}, that we recall here.
\begin{theorem}
\label{theorem:thm1}
Let $r>0$ be fixed and let $G$ be the function defined in \eqref{greendefn}. Then if $n \neq 2s$
\[  G(x,z) =  \kappa(n,s) |z-x|^{2s-n}  \int_0^{r_0(x,z)}  \frac{t^{s-1}} {(t+1)^\frac{n}{2}} \, dt ,\]
where
 	\begin{equation} \displaystyle r_0(x,z) = \frac{(r^2-|x|^2)(r^2-|z|^2)}{r^2|x-z|^2}\label{ro} \end{equation}
and $\kappa(n,s)$ is a constant depending only on $n$ and $s$.\\
For $n=2s$, the following holds
\begin{equation} G(x,z)= \kappa\Big(1,\frac{1}{2}\Big) \log\bigg( \frac{r^2-xz+\sqrt{(r^2-x^2)(r^2-z^2)}}{r|z-x|}\bigg).\label{formn1s12} \end{equation}
\end{theorem}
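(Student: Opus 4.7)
The plan is to evaluate the integral
\[ H(x,z) := \int_{\obal} \Phi(z-y)\, P_r(y,x)\, dy \]
appearing in \eqref{greendefn} explicitly and then simplify $G(x,z) = \Phi(x-z) - H(x,z)$. Throughout I treat the generic case $n \neq 2s$ first and recover $n = 2s$ at the end.

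First, reduce the geometry. By the scaling $y \mapsto ry$ and analogously for $x,z$ it suffices to treat $r=1$; the constant $\kappa(n,s)$ is unchanged and $r_0$ is dimensionless. By rotational invariance both $x$ and $z$ may be placed in a common coordinate $2$-plane. Substituting \eqref{fundsolution} and \eqref{poissondefn} gives
\[ H(x,z) = a(n,s)\,c(n,s)\,(1-|x|^2)^s \int_{|y|>1} \frac{|z-y|^{2s-n}}{(|y|^2-1)^s\,|x-y|^n}\,dy, \]
so everything reduces to evaluating this integral.

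Second, apply a Kelvin-type inversion $y = \eta/|\eta|^2$ (the ``point inversion transformations'' promised in the Appendix). This sends $\obal$ to $B_1\setminus\{0\}$ with Jacobian $|\eta|^{-2n}$, and combined with the identities $|\eta|\,|y-w| = |\eta - |\eta|^2 w|$ for any $w\in\Rn$ and $|y|^2-1 = (1-|\eta|^2)/|\eta|^2$, all exterior singularities transform into boundary powers of $1-|\eta|^2$ and into powers of $|\eta - \widetilde{x}|$, $|\eta - \widetilde{z}|$ for suitable interior images $\widetilde{x}, \widetilde{z}$ of $x,z$. Passing to polar coordinates centered so that the transformed integrand has its natural symmetry, and integrating out the angular variable by means of a standard spherical identity whose value is expressible via the hypergeometric ${}_2F_1$, collapses the problem to one dimension. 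A final rational substitution $t = t(\rho)$ calibrated so that $t$ runs from $0$ at the singularity to $r_0(x,z)$ at $\partial B_1$ (which is precisely how the combination $(r^2-|x|^2)(r^2-|z|^2)/(r^2|x-z|^2)$ arises) brings the integral to $|x-z|^{2s-n}\int_0^{r_0(x,z)} t^{s-1}(t+1)^{-n/2}\,dt$ times a combinatorial constant.

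Third, assemble and handle $n=2s$. The $\Phi(x-z)$ term in \eqref{greendefn} is absorbed via the Beta identity
\[ \int_0^\infty t^{s-1}(t+1)^{-n/2}\,dt = \frac{\Gamma(s)\,\Gamma(\tfrac{n}{2}-s)}{\Gamma(\tfrac{n}{2})}, \]
which shows that the full integral on $(0,\infty)$ reproduces a multiple of $\Phi(x-z)$; subtracting the tail on $(r_0,\infty)$ from $\Phi(x-z)$ yields the integral on $(0,r_0)$ in \eqref{forgkns}. The $\Gamma$ reflection and duplication formulas from the Appendix then identify the prefactor as $\kappa(n,s)=\Gamma(n/2)/(2^{2s}\pi^{n/2}\Gamma(s)^2)$. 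For $n=2s$ (so $n=1$, $s=1/2$) the integrand $(t(t+1))^{-1/2}$ has antiderivative $2\log(\sqrt{t}+\sqrt{t+1})$; plugging in $r_0(x,z)$ with $r=1$ and simplifying the expression inside the logarithm produces \eqref{formn1s12}. The chief obstacle throughout is the inversion bookkeeping: ensuring that the prefactor $(1-|x|^2)^s$ combines with the analogous factors generated by inverting $x$ and $z$ into a clean power matching the Beta integral; together with the nontrivial hypergeometric identity needed for the angular integration, and the careful use of $\Gamma$ reflection/duplication so that no stray power of $\pi$ is lost in the final normalization.
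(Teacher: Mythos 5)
Your overall blueprint --- write $G = \Phi - H$, evaluate $H$ by a point inversion, reduce to a radial integral, and then split the Beta integral $\int_0^\infty t^{s-1}(t+1)^{-n/2}\,dt$ to absorb the $\Phi(x-z)$ term --- is the right skeleton, and the $n=2s$ endgame with the explicit $\log$-antiderivative is also in line with the paper. But there is a substantive gap in the inversion step that is not a matter of bookkeeping.

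You propose the standard Kelvin inversion $y=\eta/|\eta|^2$ \emph{centered at the origin}. After that change of variables, the factor $(|y|^2-1)^{-s}$ indeed becomes a clean power of $1-|\eta|^2$, but the two remaining singular factors $|y-z|^{2s-n}$ and $|y-x|^{-n}$ transform, via $|y-w|=\frac{|w|}{|\eta|}|\eta-w/|w|^2|$, into
\[
\frac{|x|^n}{|z|^{2s-n}}\int_{B_1}\frac{|\eta - z/|z|^2|^{2s-n}}{(1-|\eta|^2)^s\,|\eta - x/|x|^2|^{n}}\,d\eta,
\]
i.e.\ a \emph{two-center} integral with singularities at $x/|x|^2$ and $z/|z|^2$. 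Your next sentence, ``integrating out the angular variable by means of a standard spherical identity whose value is expressible via the hypergeometric ${}_2F_1$,'' is where the argument breaks: there is no such single-variable angular identity for a spherical average with two distinct off-origin centers (the natural closed form is an Appell $F_1$, not a Gauss ${}_2F_1$), and the problem does not collapse to one dimension after polar coordinates. The paper avoids this entirely by using the inversion $\mathbf{K}_{z}$ defined in \eqref{trk}, i.e.\ the inversion \emph{centered at $z$} with respect to a sphere of radius $\sqrt{r^2-|z|^2}$. That specific choice makes $\partial B_r$ invariant, and --- crucially --- once one substitutes the three identities \eqref{firsttr}, \eqref{dxtr}, \eqref{sectr}, every power of $|y-z|$ cancels. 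What remains is a \emph{one-center} integral
\[
A(x,z)= c(n,s)\,|z-x|^{2s-n}\int_{B_r}\frac{(|x^*|^2-r^2)^s}{(r^2-|y^*|^2)^s\,|x^*-y^*|^n}\,dy^*,
\]
which is exactly the form to which the angular identity \eqref{prop1} and the subsequent Fubini/Beta manipulation via Proposition \ref{proposition:uss} apply. Your substitution ``$t=t(\rho)$ from the singularity to $\partial B_1$'' and your identification of $r_0(x,z)$ likewise only make sense after the single-center reduction, since in the paper $r_0(x,z)=(|x^*|^2-r^2)/r^2$ with $x^*=\mathbf{K}_z(x)$; this formula has no analogue if both $\tilde x$ and $\tilde z$ are still in play. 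So the fix is concrete: replace the origin-centered Kelvin map by $\mathbf{K}_z$ (or, symmetrically, $\mathbf{K}_x$), verify that the $|y-z|$-factors cancel, and only then pass to polar coordinates; the rest of your outline then goes through.
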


We consider the three cases $n>2s$, $n<2s$ and $n=2s$ separately.
\begin{proof}[Proof of Theorem \ref{theorem:thm1} for~$n>2s$]

Let $x,\, z\in B_r$ be fixed.\\
We insert the explicit formula \eqref{fundsolution} into definition \eqref{greendefn} and obtain that
	\begin{equation}G(x,z) =  a(n,s) \big( |z-x|^{2s-n}-A(x,z)  \big), \label{axz}\end{equation}
where \[A(x,z) := \int_{|y|>r} \frac{P_r(y,x) }{|y-z|^{n-2s}} \, dy.\]
Inserting also definition \eqref{poissondefn} we have that
	\begin{equation*}
\begin{split}
 A(x,z)= c(n,s)\int_{|y|>r} \frac{ (r^2-|x|^2)^s} {|y-z|^{n-2s} (|y|^2-r^2)^s|y-x|^n} \, dy .\\
		\end{split}
	\end{equation*}

%

We use the point inversion transformation that is detailed in the Appendix. { Let $x^* \in \Rn \setminus \overline{B}_r$ and $y^* \in B_r$ be the inversion of $x$, respectively $y$ with center at $z$, defined by the relation \eqref{tr}. }
With this transformation, using formulas \eqref{firsttr}, \eqref{dxtr} and \eqref{sectr} we obtain that
	\begin{equation*}
		\begin{split}
		A(x, z)
		=   c(n,s) |z-x|^{2s-n} \int_{B_r}  \frac{  (|x^*|^2-r^2)^s }{  (r^2-|y^*|^2)^s |x^*-y^*|^n } \, dy^*.
		\end{split}	
	\end{equation*}

{We continue the proof for $n>3$. However, the results hold for $n\leq 3$ and can be proved with similar computations. We use hyperspherical coordinates with $\rho>0$ and  $\theta, \theta_1, \dots, \theta_{n-3} \in [0, \pi], \theta_{n-2} \in [0,2\pi]$ (see \eqref{chofvarhypersph} in the Appendix and observations therein).}
Without loss of generality and up to rotations, we assume that $x^* =|x^*|e_n$, so we have the identity $ |x^*-y^*|^2 = \rho^2+|x^*|^2-2|x^*| \rho\cos\theta$ (see Figure~\ref{fign:xycalc} in the Appendix for clarity). With this change of coordinates, we obtain
\begin{equation*}
		\begin{split}
		  A(x,z)=\; &c(n,s)|z-x|^{2s-n} (|x^*|^2-r^2)^s 2\pi \prod_{k=1}^{n-3}\int_0^{\pi} \sin^k\theta\, d\theta  \\
		 & \;  \int_0^r \frac{\rho^{n-1}}{(r^2 -\rho^2)^s}   \bigg(\int_0^{\pi} \frac{ \sin^{n-2}\theta}{(\rho^2+|x^*|^2-2|x^*| \rho\cos\theta)^{n/2}} \, d\theta \bigg) \,d\rho.		
    \end{split}
	\end{equation*}

Let $\tau:=  \frac{|x^*|}{\rho}$ (notice that $\tau >1$). We have that
	\[	\int_0^{\pi} \frac{ \sin^{n-2}\theta}{(\rho^2+|x^*|^2-2|x^*| \rho\cos\theta)^{n/2}} \, d\theta =  \frac{1}{\rho^n}  \int_0^{\pi} \frac{ \sin^{n-2}\theta}{(\tau^2+1-2\tau \cos\theta)^{n/2}} \, d\theta.\]
Thanks to identity \eqref{prop1} we obtain that
	\begin{equation*}
		\begin{split}
\int_0^{\pi} \frac{ \sin^{n-2}\theta}{(\rho^2+|x^*|^2-2|x^*| \rho\cos\theta)^{n/2}} \, d\theta = \; &\frac{1}{\rho^n}  \frac{1}{\tau^{n-2}(\tau^2-1)} \int_0^{\pi} \sin^{n-2}\alpha \, d\alpha \\
=\; &  \frac{1}{|x^*|^{n-2}(|x^*|^2-\rho^2)}\int_0^{\pi}\sin^{n-2}\alpha \, d\alpha.
		\end{split}
	\end{equation*}
Then, using identity \eqref{prop2} and inserting the explicit value of $c(n,s)$ given by \eqref{ctcns}, we arrive at
	\begin{equation} \label{axzz}
		\begin{split}
	 A(x,z)
	= \; & \frac{\sin (\pi s) }{\pi}   |z-x|^{2s-n} \frac{\big(|x^*|^2-r^2\big)^s} {|x^*|^{n-2}}  \int_0^r  \frac{2\rho^{n-1}}{(r^2 -\rho^2)^s (|x^*|^2-\rho^2)}\, d\rho\\
	=  \; & \frac{\sin (\pi s) }{\pi}  |z-x|^{2s-n} \frac{\big(|x^*|^2-r^2\big)^s} {|x^*|^{n-2}} J(x^*),
		\end{split}
	\end{equation}
where \[ J(x^*)=  \int_0^r  \frac{2\rho^{n-1}}{(r^2 -\rho^2)^s (|x^*|^2-\rho^2)}\, d\rho.\]

Now we define the constant \begin{equation} k(n,s) :=  \frac{1}{2}  \bigg(\int_0^1  \tau^{n-2s-1} (1-\tau^2)^{s-1} \, d\tau\bigg)^{-1} \label{intkns}\end{equation}
(we compute its explicit value in Subsection \ref{constants}).
Then we have that
	\[J(x^*)= 2 k(n,s) \int_0^r\frac{2\rho^{n-1}}{(r^2-\rho^2)^s(|x^*|^2-\rho^2)} \bigg(\int_0^1 \tau^{n-2s-1} (1-\tau^2)^{s-1} \, d\tau\bigg)  \, d\rho.\]
We perform the change of variables $t=\tau \rho$ and apply the Fubini-Tonelli's theorem to obtain that
	\begin{equation*}
	\begin{split}
	J(x^*)=\;& 2 k(n,s) \int_0^r \frac{2\rho}{(r^2-\rho^2)^s(|x^*|^2-\rho^2)} \bigg(\int_0^{\rho} t^{n-2s-1} (\rho^2-t^2)^{s-1} \, dt  \bigg)  \, d\rho\\
	=\;& 2k(n,s) \int_0^r t^{n-2s-1} \bigg( \int_t^r \frac{ 2\rho (\rho^2-t^2)^{s-1} }{(r^2-\rho^2)^s(|x^*|^2-\rho^2) }\, d\rho \bigg) \, dt.
	\end{split}
	\end{equation*}
We change variables $\rho^2-t^2=\tau$ and $r^2-\tau-t^2=\rho$ to obtain
\begin{equation*}
	\begin{split}
J(x^*)=\;& 2   k(n,s) \int_0^r t^{n-2s-1} \bigg( \int_0^{r^2-t^2} \frac{\tau^{s-1}}{(r^2-\tau-t^2)^s(|x^*|^2- \tau -t^2)} \, d\tau \bigg) \, dt\\
		 = \;& 2k(n,s)\int_0^r t^{n-2s-1} \bigg( \int_0^{r^2-t^2} \frac{ (r^2-t^2-\rho)^{s-1}}{\rho^s (|x^*|^2+\rho -r^2)} \, d\rho \bigg) \, dt \\
	= \;&  2k(n,s) \int_0^r t^{n-2s-1}  I(t) \, dt,
		\end{split}
	\end{equation*}
where \[I(t) = \int_0^{r^2-t^2} \frac{ (r^2-t^2-\rho)^{s-1}}{\rho^s (|x^*|^2+\rho -r^2)} \, d\rho.\]
Using Proposition \ref{proposition:uss} for $\alpha=r^2-t^2$ and $\beta=|x^*|^2-r^2$
we have that \[  I(t)= \frac{\pi}{\sin(\pi s)} \frac{(|x^*|^2-t^2)^{s-1}}{(|x^*|^2-r^2)^s}.\]
Hence in $J(x^*)$, with the changes of variables $ \frac{|x^*|}{t}= \tau$ and then $\tau^2-1=t$ we have that
	\begin{equation*}
		\begin{split}
		 J(x^*) = \;& 2k(n,s)  \frac{\pi}{\sin(\pi s)} (|x^*|^2-r^2)^{-s} \int_0^r t^{n-2s-1} (|x^*|^2 -t^2)^{s-1} \, dt.\\
		=\;& 2 k(n,s) \frac{\pi}{\sin(\pi s)} \frac{ |x^*|^{n-2}}{(|x^*|^2-r^2)^s}  \int_{\frac {|x^*|}{r}}^{\infty} \frac{(\tau^2-1)^{s-1} }{\tau^{n-1}} \, d\tau \\
 	=   \;&   k(n,s)\frac{\pi}{\sin(\pi s)} \frac{ |x^*|^{n-2}}{(|x^*|^2-r^2)^s}  \int_{\frac{|x^*|^2-r^2}{r^2}}^{\infty} \frac{t^{s-1}} {(t+1)^{n/2}} \, dt.
		\end{split}
	\end{equation*}
{	Using formula \eqref{firsttr} and definition \eqref{ro} we have the equalities
	\[\frac{|x^*|^2-r^2}{r^2}  =\frac{(r^2-|x|^2)(r^2-|z|^2)}  {r^2|x-z|^2}= r_0(x,z). \]
Therefore inserting $J(x^*)$ into \eqref{axzz} it follows that
	\[	A(x,z)=  k(n,s) |z-x|^{2s-n}  \int_{r_0(x,z)}^{\infty} \frac{t^{s-1}} {(t+1)^{n/2}} \, dt.\]}
By inserting this into \eqref{axz} we obtain that
\begin{equation*} G(x,z)=  a(n,s)|z-x|^{2s-n} \bigg(  1 - k(n,s)\int_{r_0(x,z)}^{\infty} \frac{t^{s-1}} {(t+1)^{n/2}} \, dt   \bigg).
\end{equation*}
 Now we change the variable $t=1/\tau^2 -1$ in definition \eqref{intkns} and obtain that
	\begin{equation}  k(n,s) \int_0^{\infty}  \frac{t^{s-1}} {(t+1)^\frac{n}{2}} \, dt =1.\label{knsinfty}\end{equation}
It follows that
	\begin{equation*}
		\begin{split}
	 G(x,z)
	=  a(n,s)k(n,s) |z-x|^{2s-n} \int_0^{r_0(x,z)}  \frac{t^{s-1}} {(t+1)^\frac{n}{2}} \, dt  .
		\end{split}
	\end{equation*}
We set
 \begin{equation}\kappa(n,s):=a(n,s)k(n,s)\label{kappa}\end{equation}
and conclude that
	\begin{equation*} G(x,z)= \kappa(n,s)|z-x|^{2s-n}  \int_0^{r_0 (x,z)}  \frac{t^{s-1}} {(t+1)^\frac{n}{2}} \, dt .    \end{equation*}  	
Hence the desired result in the case $n>2s$.
\end{proof}

\begin{proof}[Proof of Theorem \ref{theorem:thm1} for~$n<2s$]
We consider without loss of generality $r=1$ (by rescaling, the statement of the theorem is verified in the more general case). By \eqref{fundsolution} and definition \eqref{greendefn} we have that
\begin{equation}G(x,z) =  a(1,s) \big( |z-x|^{2s-1}-A(x,z)  \big), \label{axz11}\end{equation}
where \[A(x,z) := \int_{\mathbb{R}\setminus (-1,1)} \frac{P_1(y,x) }{|z-y|^{1-2s}} \, dy.\]
Using definition \eqref{poissondefn} we have that
\begin{equation*}
\begin{split}
A(x,z)= c(1,s)\int_{\mathbb{R}\setminus (-1,1)} \frac{ (1-x^2)^s} {|y-z|^{1-2s} (y^2-1)^s|y-x|} \, dy  .
		\end{split}
	\end{equation*}
We proceed exactly as in the case $n>2s$ performing the point inversion transformation and we arrive at
\begin{equation*}
		\begin{split}
		 A(x, z)=  c(1,s) |z-x|^{2s-1} \int_{-1}^1  \frac{  (x^{*^2}-1)^s }{  (1-y^{*^2})^s |x^*-y^*| } \, dy^*,
		\end{split}	
	\end{equation*}
	where $|x^*|>1$.
By symmetry we have that \begin{equation}
		 A(x, z)=   c(1,s) |z-x|^{2s-1} (x^{*^2}-1)^s |x^*|   J(x^*),\label{axzn1}
		\end{equation}
with \[J(x^*)=  \int_0^1  \frac{2 }{  (1-y^{*^2})^s (x^{*^2}-y^{*^2} )} \, dy^* .\]
We change the variable ${y^*}^2=t$ and obtain that
	\[J(x^*)= \Big(\frac{1}{x^*}\Big)^2 \int_0^1 t^{-1/2} (1-t)^{-s} \Big(1-\frac{t}{{x^*}^2} \Big)^{-1} \, dt.\]
By the integral representation \eqref{inthyp} of the hypergeometric function, it follows that
	\[ J(x^*)=   \Big(\frac{1}{x^*}\Big)^2 \, \frac{\Gamma(\frac{1}{2})\Gamma(1-s)}{\Gamma(\frac{3}{2}-s)} F\bigg(1, \frac{1}{2}, \frac{3}{2}-s, \frac{1}{{x^*}^2}\bigg).\]
We use the linear transformation \eqref{hyp4} (notice that $\big({1}/{x^*}\big)^2 <1$) and obtain that
	\begin{equation}
		\begin{split}
		 F\bigg(1, \frac{1}{2}, \frac{3}{2}-s, \frac{1}{{x^*}^2}\bigg)=\;& \frac{\Gamma(\frac{3}{2}-s) \Gamma(-s)}{\Gamma(\frac{1}{2}-s)\Gamma(1-s)} F\bigg(1,\frac{1}{2},s+1,\frac{{x^*}^2-1}{{x^*}^2}\bigg) \\
	+ \;&  \bigg(\frac{{x^*}^2\!-\!1}{{x^*}^2} \bigg)^{-s}  \frac{\Gamma(\frac{3}{2}\!-\!s) \Gamma(s)}{\Gamma(1)\Gamma(\frac{1}{2})} F\bigg(\frac{1}{2}\!-\!s, 1\!-\!s,1\!-\!s, \frac{{x^*}^2-1}{{x^*}^2}\bigg).
		\label{sumhyp}
		\end{split}
	\end{equation}
The first hypergeometric function obtained in the sum \eqref{sumhyp} is transformed according to \eqref{hyp3} as
	\[ F\bigg(1,\frac{1}{2},s+1,\frac{{x^*}^2-1}{{x^*}^2} \bigg)=  |x^*|F\bigg(\frac{1}{2},s,s+1,1-{x^*}^2 \bigg). \]
 For $s+1>s>0$ and $|1-{x^*}^2|<1 $ the convergence conditions are fulfilled for the integral representation \eqref{inthyp} of the hypergeometric function. Therefore we may write
	\[F\bigg(1,\frac{1}{2},s+1,\frac{{x^*}^2-1}{{x^*}^2} \bigg)= |x^*| \frac{\Gamma(s+1)}{\Gamma(s)} \int_0^1 \frac{t^{s-1}}{ \big(1+({x^*}^2-1) t\big)^{1/2}} \, dt .\]
On the other hand, for the second hypergeometric function obtained in identity \eqref{sumhyp}, we use transformations \eqref{hyp2} and \eqref{hyp3} and arrive at
	\begin{equation*}
		\begin{split}
			F\bigg(\frac{1}{2}-s, 1-s,1-s, \frac{{x^*}^2-1}{{x^*}^2}\bigg)= &\;{x^*}^{-2s} |x^*| F\bigg(\frac{1}{2}-s,0,1-s,1-{x^*}^2\bigg)\\
	=&\;{x^*}^{-2s} |x^*|  F\bigg(0,\frac{1}{2},1-s, \frac{{x^*}^2-1}{{x^*}^2}\bigg).
		\end{split}
	\end{equation*}
We use the Gauss expansion \eqref{gausshyp} with $a=0$, $b=  \frac{1}{2}$, $c=1-s$ and $w=  \frac{{x^*}^2-1}{{x^*}^2}$ (we notice that $0>c-a-b>-1$ for $s>1/2$ and $|w|<1$, thus the series is convergent). Since $a=0$, all the terms of the series vanish, except for $k=0$. Hence we obtain that
	\[F\bigg(0,\frac{1}{2},1-s, \frac{{x^*}^2-1}{{x^*}^2}\bigg)=1\]
and therefore
	\[F\bigg(\frac{1}{2}-s, 1-s,1-s, \frac{{x^*}^2-1}{{x^*}^2}\bigg)= {x^*}^{-2s} |x^*| .\]
Consequently
	\begin{equation*}
		\begin{split}
	J(x^*)=&\;\frac{1}{|x^*| } \bigg( \frac{ \Gamma(\frac{1}{2}) \Gamma(-s) \Gamma(s+1)}{\Gamma(\frac{1}{2}-s) \Gamma(s)}
	\int_0^1 \frac{t^{s-1}}{ \big(1+({x^*}^2-1) t\big)^{1/2}} \, dt
	+  \frac{ \Gamma(s)\Gamma(1-s)}{ ( {x^*}^2 -1)^{s}}\bigg).
		\end{split}
	\end{equation*}
{We recall that $c(1,s)= \big(\Gamma(s)\Gamma(1-s)\big)^{-1}$ and we define the constant
\begin{equation}k(1,s):= c(1,s) \frac{ \Gamma(\frac{1}{2}) \Gamma(-s) \Gamma(s+1)}{\Gamma(\frac{1}{2}-s) \Gamma(s)}.\label{k1s}\end{equation} We insert $J(x^*)$ into \eqref{axzn1} and have that
	\begin{equation*}
		\begin{split}
	   A(x,z)=  k(1,s) |z-x|^{2s-1}    \int_0^1 \frac{ ({x^*}^2-1)^s t^{s-1}}{ \big(1+({x^*}^2-1) t\big)^{1/2}} \, dt  +|z-x|^{2s-1} .			
		 \end{split}
	\end{equation*}
	With the change of variables $({x^*}^2 -1)t = \tau$ we obtain that
	\begin{equation*}
		\begin{split}
	   A(x,z)=  k(1,s) |z-x|^{2s-1}\int_0^{{x^*}^2-1} \frac{t^{s-1}} {(t+1)^{\frac{1}{2}}} \, dt  +|z-x|^{2s-1} .			
		 \end{split}
	\end{equation*}}
Inserting this into \eqref{axz11} and noticing that ${x^*}^2-1 =r_0(x,z)$ it follows that
		\[ G(x,z)= - a(1,s) k(1,s)  |z-x|^{2s-1}\int_0^{r_0(x,z)} \frac{t^{s-1}} {(t+1)^{\frac{1}{2}}} \, dt.\]
We call \begin{equation} \kappa(1,s) =- a(1,s) k(1,s) \label{kk1s}\end{equation} and conclude the proof of Theorem \ref{theorem:thm1} for $n<2s$.
\end{proof}

\begin{proof}[Proof of Theorem \ref{theorem:thm1} for~$n=2s$]
Without loss of generality, we assume $r=1$.  We insert the explicit formulas  \eqref{fundsolution} and \eqref{poissondefn} into definition \eqref{greendefn}. Moreover, we use the explicit values of the constant $  a\Big(1,\frac{1}{2}\Big)$ from \eqref{ctans3} and $ c\Big(1, \frac{1}{2}\Big)$ from \eqref{ctcns}. We obtain that
	 \begin{equation}\label{gxz111}
			G(x,z) =-\frac{1}\pi \log |x-z| + \frac{1}{\pi^2} \int_{|y|\geq 1} \log|y-z| \sqrt{\frac{1-x^2}{y^2-1}  } \frac{dy}{|x-y|}.
		\end{equation}
Let \[A(x,z):= \int_{|y|\geq 1} \log|y-z| \sqrt{\frac{1-x^2}{y^2-1}  } \frac{dy}{|x-y|}.\]
We perform the change of variables $v=  \frac{yx-1}{y-x}$. Since $1-v^2 \geq 0$, we have that $|v|\leq 1$. We set $w:=  \frac{xz-1}{z-x}$ and observe that $|w|\geq 1$.
It follows that
	\[A(x,z) = \int_{|v|\leq 1}   \bigg(\log\frac{ |v-w|}{|v-x|}+ \log|z-x|\bigg) \, \frac{dv}{\sqrt{1-v^2}}. \]
We use identity \eqref{logid} and since $|w|\geq 1$ and $|x|\leq 1$ we obtain that
 \begin{equation*}
	\begin{split}
	  A(x,z) =&\; \pi \log \Big(|w|+(w^2-1)^{1/2}\Big)  + \pi \log |x-z| \\
	= &\; \pi \log (1-zx + \sqrt{(1-x^2)(1-z^2)} ).
	\end{split}
\end{equation*}
Inserting this into \eqref{gxz111} we obtain that
	\[G(x,z)=\frac{1}\pi \log \bigg(\frac{1-zx + \sqrt{(1-x^2)(1-z^2)}}{|x-z|} \bigg).\]
This completes the proof of Theorem \ref{theorem:thm1} for~$n=2s$.
\end{proof}

\subsection{Representation formula for the fractional Poisson equation}\label{thm2}

This subsection is dedicated to the proof of Theorem~\ref{theorem:thm2}, which we recall here.

\begin{theorem} \label{theorem:thm2}
Let $r>0$, $h \in C^{2s+\eee}(B_r)\cap C( \overline B_r)$ and let
	 \begin{equation*}
		u(x) : =
			\begin{cases}
		\displaystyle \int_{B_r} h(y) G(x,y) \, dy \quad & \text{ if } x\in B_r, \\
		0 \quad \quad & \text{ if } x \in {\obal}.
		\end{cases}
	\end{equation*}
Then $u$ is the unique pointwise continuous solution of the problem \eqref{PoissoneqD}
	\begin{equation*}
		\begin{cases}
		    \frlap u= h   \qquad &\mbox{ in } {B_r} ,
 		\\  u= 0   \qquad &\mbox{ in } {\obal}.
	\end{cases}
	\end{equation*}
\end{theorem}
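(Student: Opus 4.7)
My plan is to prove the theorem by decomposing $u$ into two pieces whose fractional Laplacians are controlled by the earlier results. Unfolding definition~\eqref{greendefn} of $G$ inside the integral defining $u$ and applying Fubini, I obtain for $x\in B_r$
\begin{equation*}
u(x) = \int_{B_r} h(y)\Phi(x-y)\,dy - \int_{\obal} P_r(w,x)\bigg(\int_{B_r} h(y)\Phi(y-w)\,dy\bigg)\,dw.
\end{equation*}
Setting $v(x):=\int_{B_r} h(y)\Phi(x-y)\,dy$ (well defined on all of $\Rn$; note that $\Phi$ is even), the bracketed inner integral equals $v(w)$. Thus $u=v-H$ on $B_r$, where $H(x):=\int_{\obal}P_r(w,x)v(w)\,dw$ is the $s$-harmonic extension of $v|_{\obal}$ provided by Theorem~\ref{theorem:DPL}. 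Extending $H$ by $H\equiv v$ on $\obal$, the identity $u=v-H$ holds globally and is consistent with the prescribed vanishing of $u$ outside $B_r$.

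Next, I would verify admissibility and reduce the proof to one pointwise identity. Adapting the proof of Lemma~\ref{lem:uff1} (which really only uses $L^\infty$-boundedness of a compactly supported factor) yields $v\in L^1_s(\Rn)$, and continuity of $v$ on $\Rn$ follows from local integrability of $\Phi$ via dominated convergence. Then Theorem~\ref{theorem:DPL}, applied with data $v|_{\obal}\in L^1_s(\Rn)\cap C(\Rn)$, produces $H\in L^1_s\cap C(\Rn)$ with $\frlap H(x_0)=0$ for every $x_0\in B_r$. By linearity, it remains to establish $\frlap v(x_0)=h(x_0)$ pointwise in $B_r$.

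This pointwise identity is the main obstacle, because $h\chi_{B_r}$ is not globally $C^{2s+\eee}_c$ and Theorem~\ref{theorem:poissonsolution} does not apply directly. I would handle it by a localization. Fix $x_0\in B_r$, choose $\eta\in C_c^\infty(\Rn)$ with $\mbox{supp}\,\eta\subset B_r$ and $\eta\equiv 1$ on some neighborhood $U$ of $x_0$, and split $v=v_1+v_2$ with $v_1:=(\eta h)*\Phi$ and $v_2:=((1-\eta)h\chi_{B_r})*\Phi$. Since $\eta h$, extended by zero, belongs to $C^{2s+\eee}_c(\Rn)$, Theorem~\ref{theorem:poissonsolution} yields $\frlap v_1(x_0)=\eta(x_0)h(x_0)=h(x_0)$. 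For $v_2$ the integrand avoids the singularity of $\Phi$ at $x_0$, so differentiating under the integral shows $v_2\in C^\infty(U)$, and Remark~\ref{blacaz} combined with $v_2\in L^1_s(\Rn)$ makes $\frlap v_2(x_0)$ well defined. To see it vanishes, approximate $(1-\eta)h\chi_{B_r}$ in $L^\infty$ by $f_k\in C_c^\infty(\Rn)$ supported away from $U$; Theorem~\ref{theorem:poissonsolution} gives $\frlap(f_k*\Phi)(x_0)=f_k(x_0)=0$, and passing to the limit in the representation \eqref{frlap2def} is justified by the uniform convergence (with derivatives) of $f_k*\Phi$ to $v_2$ on $U$, together with the $L^1_s$-bound \eqref{db2} controlling the contribution from $\Rn\setminus U$.

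Continuity and uniqueness are then routine. At a point $y_0\in\partial B_r$, both $v(x)\to v(y_0)$ (by continuity of $v$) and $H(x)\to v(y_0)$ (by the continuity statement in Theorem~\ref{theorem:DPL}) as $x\to y_0$, so $u(x)\to 0=u(y_0)$; combined with continuity of $u$ on $B_r$ and on $\obal$, this gives $u\in C(\Rn)$. Uniqueness follows exactly as in the closing paragraph of Theorem~\ref{theorem:DPL}: the difference of two continuous solutions is $s$-harmonic in $B_r$, continuous on $\Rn$ and vanishing outside $B_r$, hence identically zero by the maximum principle (Theorem~3.3.3 of \cite{nonlocal}).
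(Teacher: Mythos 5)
Your decomposition $u = v - H$ with $v := (h\chi_{B_r})*\Phi$ is a genuinely different entry point than the paper's. The paper instead replaces $h$ by a compactly supported extension $\tilde h \in C^{2s+\eee}_c(\Rn)$; by identity~\eqref{Ipu} the contribution of $\tilde h$ outside $B_r$ cancels in $u$, so $u$ can equally be written as $g - H_g$ with $g := \tilde h * \Phi$, and Theorem~\ref{theorem:poissonsolution} then gives $\frlap g = h$ on $B_r$ directly, with no localization. Your route avoids the extension but must then fight the jump of $h\chi_{B_r}$ across $\partial B_r$, which is exactly where your splitting $v = v_1 + v_2$ comes in.

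The one genuine flaw is in the treatment of $v_2$. You propose to approximate $f := (1-\eta)h\chi_{B_r}$ in $L^\infty$ by $f_k \in C^\infty_c(\Rn)$ supported away from $U$; but $f$ has a jump at $\partial B_r$ (where $(1-\eta)h = h$ is in general nonzero), so no sequence of continuous functions converges to it uniformly --- the desired approximants simply do not exist. The argument is repaired by approximating in $L^1$ instead: take $f_k$ to be mollifications of $f$ and fix a smaller neighborhood $U'$ of $x_0$ with $\overline{U'}\subset U$. Since $f$ and the $f_k$ are supported at positive distance from $U'$, one has $\|D^\alpha(f_k*\Phi - v_2)\|_{L^\infty(U')} \leq C_\alpha\|f_k - f\|_{L^1(\Rn)}$, and re-running the computation in Lemma~\ref{lem:uff1} with $|f(y)|$ kept inside the outer integral yields $\|f_k*\Phi - v_2\|_{L^1_s(\Rn)} \leq c\,\|f_k - f\|_{L^1(\Rn)}$ in place of \eqref{db2}; these two bounds suffice to pass to the limit in \eqref{frlap2def} exactly as you describe. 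Alternatively, and more cheaply, one can argue distributionally: for any $\varphi \in \Sa(\Rn)$ supported in $U'$, one has $<v_2,\frlap\varphi>_s = \int_{\Rn} f\,\varphi\,dx = 0$ since $f \equiv 0$ on $U$, and since $v_2 \in L^1_s(\Rn)$ is smooth on $U'$ the map $x\mapsto\frlap v_2(x)$ is defined and continuous on $U'$; hence $\frlap v_2 \equiv 0$ on $U'$, in particular at $x_0$.
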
	

\begin{proof}[Proof of Theorem \ref{theorem:thm2}]
We identify $h$ with its $C_c^{2s+\eee} (\Rn)$ extension, namely we consider $\tilde h \in C_c^{2s+\eee}(\Rn)$ with $B_r \subset \text{supp} \,\tilde h$ such that $\tilde h=h$ on $B_r$.
Then, by definition \eqref{greendefn} we have that in $B_r$
	\begin{equation*}
		\begin{split}
		 u(x) = &\; \int_{B_r}h(z) G(x,z)\, dz \\
		 	= &\; \int_{B_r} h(z)\Phi(z-x)  dz -   \int_{B_r} h(z) \bigg(\int_{\obal} \Phi(y-z)P_r(y,x) dy \bigg)  dz \\
	=&\;  h*\Phi(x) - \int_{\obal} P_r(y,x) \big(h*\Phi\big)(y) \, dy.
		\end{split}
	\end{equation*}
Let  \[ g(x):= h*\Phi(x) \mbox{ for any  } x \in \Rn.\] As we have seen in Theorem~\ref{theorem:poissonsolution},~$g \in L_s^1(\Rn) \cap C(\Rn)$.
Let for any $x \in \Rn$ \[ u(x)=v_0(x)-v_1(x),\] where
$v_0(x) = g(x) \mbox{ in } \Rn$
and
\[ v_1(x)=	
		\begin{cases}
	\displaystyle  \int_{{\Rn}\setminus B_r} P_r(y,x) g(y)\, dy &\quad  \, \text{if } x\in B_r, \\
				g(x) &\quad \, \text{if } x \in {\obal}.
		\end{cases}
\]
Then for $x\in B_r$, thanks to Theorems \ref{theorem:poissonsolution} and \ref{theorem:DPL}
	\[ \frlap u(x) = h(x) - 0=h(x),\]
hence $u$ is solution \eqref{PoissoneqD}. Also, Theorem \ref{theorem:poissonsolution} and Theorem \ref{theorem:DPL} assure the continuity of $u$ in $\Rn$.

The uniqueness of the solution follows from the simple application of the Maximum Principle for the fractional Laplacian (see Theorem 3.3.3 in \cite{nonlocal}).
\end{proof}

\subsection{Computation of the normalization constants}\label{constants}

This subsection is dedicated to the computation of the constant $\kappa(n,s)$ in Theorem \ref{theorem:thm1}.

We first compute the constant $k(n,s)$ given in \eqref{intkns} for $n>2s$.
We claim that
 \begin{equation} k(n,s)=  \frac{\Gamma({\frac{n}{2}})} { \Gamma(\frac{n}{2}-s) \Gamma(s)}.\label{kns} \end{equation}

Indeed, using definition \eqref{intkns} and taking the change of variable $\tau^2=t$ we have that
\[ \frac{1}{k(n,s)} = 2 \int_0^1  \tau^{n-2s-1} (1-\tau^2)^{s-1} \, dt =  \int_0^1 t^{\frac{n}{2}-s -1} (1-t)^{s-1} \, dt .\]
We use identities \eqref{betazerouno} and \eqref{betagamma} to obtain that
\[ \int_0^1  t^{\frac{n}{2}-s-1} (1-t)^{s-1} \, dt =  \frac{\Gamma( \frac{n}{2}  -s)\Gamma(s)}{\Gamma(\frac{n}{2} )},\]
which is exactly the result.

We now prove Theorem \ref{theorem:kns}, namely we compute the constant $\kappa(n,s)$ encountered in the formula of the Green function $G$.
\begin{theorem} \label{theorem:kns}The constant $\kappa(n,s)$ introduced in identity \eqref{forgkns} is
	\begin{align*}
		 \kappa(n,s) &= \displaystyle \frac{\Gamma(\frac{n}{2}) } {2^{2s}\pi^{ \frac{n}2}   \Gamma^2(s) } & \text{ for } &n \neq 2s, \\
		\kappa\Big(1,\frac{1}{2}\Big) &=\frac{1}{\pi}
		  & \text{ for } &n=2s.
		\end{align*}
\end{theorem}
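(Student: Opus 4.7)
The plan is to split into the three cases $n>2s$, $n<2s$ and $n=2s$ and, in each, to simply combine the formulas already established for the elementary constants. All three computations are pure Gamma-function bookkeeping; no new analytic input is required beyond the identities collected in the Appendix, namely $\Gamma(z+1)=z\Gamma(z)$, the reflection formula $\Gamma(s)\Gamma(1-s)=\pi/\sin(\pi s)$, and $\Gamma(1/2)=\sqrt{\pi}$.

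For $n>2s$, I will combine relation \eqref{kappa}, namely $\kappa(n,s)=a(n,s)k(n,s)$, with the value of $a(n,s)$ given in \eqref{ctans1} and the value
\[k(n,s)=\frac{\Gamma(n/2)}{\Gamma(n/2-s)\Gamma(s)}\]
just established in \eqref{kns}. The factors $\Gamma(n/2-s)$ cancel and one reads off directly
\[\kappa(n,s)=\frac{\Gamma(n/2-s)}{2^{2s}\pi^{n/2}\Gamma(s)}\cdot\frac{\Gamma(n/2)}{\Gamma(n/2-s)\Gamma(s)}=\frac{\Gamma(n/2)}{2^{2s}\pi^{n/2}\Gamma^2(s)}.\]

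For $n<2s$ (so $n=1$, $s>1/2$), I will use relation \eqref{kk1s}, namely $\kappa(1,s)=-a(1,s)k(1,s)$, insert $a(1,s)$ from \eqref{ctans1} with $n=1$, insert $k(1,s)$ from \eqref{k1s}, and insert the value $c(1,s)=\sin(\pi s)/\pi$ obtained from \eqref{ctcns}. After cancelling $\Gamma(1/2)=\sqrt{\pi}$ and $\Gamma(1/2-s)$, I will apply $\Gamma(s+1)=s\Gamma(s)$ together with $s\Gamma(-s)=-\Gamma(1-s)$ to replace $s\,\Gamma(-s)\Gamma(s+1)$ by $-\Gamma(1-s)\Gamma(s)$, and finally use the reflection formula $\Gamma(s)\Gamma(1-s)=\pi/\sin(\pi s)$ together with $c(1,s)=\sin(\pi s)/\pi$. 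The sines and the $\pi$'s all cancel and one obtains $\kappa(1,s)=1/(2^{2s}\Gamma^2(s))$, which is exactly the $n=1$ specialization of the formula already proved in the case $n>2s$. This bookkeeping is the only step that is mildly delicate — one must keep track of the sign introduced by $\Gamma(-s)$ — but nothing deep is going on.

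For $n=2s$, there is nothing more to do: the explicit value $\kappa(1,1/2)=1/\pi$ was already read off at the very end of the proof of Theorem \ref{theorem:thm1}, where the identity \eqref{formn1s12} was derived with the prefactor $1/\pi$ in front of the logarithm. I will simply record this as the third case. The main obstacle, if any, is purely algebraic: making sure the Gamma identities in the $n<2s$ case are applied in the right order so that the constant $c(1,s)$ combines correctly with $\Gamma(-s)$ via reflection; once that is done, the theorem reduces to a line of arithmetic.
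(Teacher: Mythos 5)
Your proposal is correct and follows the paper's own proof essentially step by step: plug $a(n,s)$ and $k(n,s)$ into $\kappa(n,s)=a(n,s)k(n,s)$ for $n>2s$, combine $a(1,s)$, $k(1,s)$ and $c(1,s)$ via the Gamma identities \eqref{gamxx1}, \eqref{gam4}, \eqref{gam3} for $n<2s$, and read off $\kappa(1,1/2)=1/\pi$ from the end of the proof of Theorem \ref{theorem:thm1} for $n=2s$. The only cosmetic difference is that you insert $c(1,s)=\sin(\pi s)/\pi$ and then invoke the reflection formula, whereas the paper writes $c(1,s)=(\Gamma(s)\Gamma(1-s))^{-1}$ directly, which is the same thing.
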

\begin{proof}[Proof of Theorem \ref{theorem:kns}]
For $n>2s$, we insert the values of $a(n,s)$ from \eqref{ctans1} and of $k(n,s)$ from \eqref{kns} into  \eqref{kappa} and we obtain that
\begin{equation*} \begin{aligned}
\kappa(n,s) =a(n,s)k(n,s)
 =  \frac{ \Gamma(\frac{n}{2}) }{2^{2s}\pi^{\frac{n}2} \Gamma^2(s)}.
\end{aligned} \end{equation*}
For $n<2s$, we recall definitions \eqref{k1s}, \eqref{kk1s} and \eqref{ctans1}, we use  identities \eqref{gam3}, \eqref{gam4} and \eqref{gamxx1} relative to the Gamma function and obtain that
	\[ \begin{aligned}
	\kappa(1,s)=   -a(1,s) k(1,s)
		=  \frac{(-s)\Gamma(-s) }{2^{2s}\Gamma(s)} \frac{1}{\Gamma(1-s)\Gamma(s)}
		=\frac{1}{2^{2s}\Gamma^2(s)}. \end{aligned} \]
On the other hand, we recall that $\kappa\Big( 1,\frac{1}{2}\Big)=  \frac{1}\pi$, as we have seen in the proof of Theorem \ref{theorem:thm1}  for $n=2s$.
This concludes the proof of Theorem \ref{theorem:kns}.
\end{proof}

We make now a remark on the constants $C(n,s)$ defined by \eqref{GCNS} and $c(n,s)$ defined in \eqref{ctcns}. These two constants are both used in various works in the definition of the fractional Laplacian, but as we have seen in the course of this paper, they arise for different normalization purposes. The constant $C(n,s)$ as defined by \cite{galattica} is consistent with the Fourier expression of the fractional Laplacian, meanwhile $c(n,s)$ as introduced in \cite{Landkof} is used to normalize the Poisson kernel (and the $s$-mean kernel), and is consistent with the constants used for the fundamental solution and the Green function. We first introduce the direct computation of the fractional Laplacian of a particular function. Namely:

\begin{lemma}
Let $u(x) =(1-|x|^2)_{+}^s$. Then in $B_1$
	\[ \frlap u(x) = C(n,s)\frac{\omega_n}{2} B(s, 1-s),\]
where $B$ is the Beta function defined in \eqref{beta}.
\label{dydares}
\end{lemma}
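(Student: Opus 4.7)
The plan is to exploit rotational symmetry to reduce to computing $\frlap u(0)$, carry out that computation by hand via polar coordinates plus integration by parts, and finally upgrade the pointwise identity at the origin to an identity on all of $B_1$ via the uniqueness of the Dirichlet problem proved in Theorem~\ref{theorem:thm2}.

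First I would check that $u$ is in the right class. Since $u$ is compactly supported and bounded, $u\in L^1_s(\Rn)$; and $u\in C^{\infty}(B_1)\cap C(\Rn)$ with $u\equiv 0$ on $\Rn\setminus B_1$, so for any $x\in B_1$ the regularity requirement of Remark~\ref{blacaz} is fulfilled, and $\frlap u(x)$ makes sense pointwise. Rotational symmetry of both $u$ and $\frlap$ implies that $x\mapsto\frlap u(x)$ is radial on $B_1$, so it suffices to evaluate it at $x=0$ and on the ray $\{te_n:0<t<1\}$.

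The central calculation is at the origin. Using \eqref{frlapdef} and passing to polar coordinates,
\begin{equation*}
\frlap u(0)=C(n,s)\int_{\Rn}\frac{1-(1-|y|^2)_+^s}{|y|^{n+2s}}\,dy
=C(n,s)\,\omega_n\left[\int_0^1\frac{1-(1-\rho^2)^s}{\rho^{1+2s}}\,d\rho+\frac{1}{2s}\right].
\end{equation*}
I would integrate by parts in the first integral with $f(\rho)=1-(1-\rho^2)^s$ and $g'(\rho)=\rho^{-1-2s}$, so that $g(\rho)=-\rho^{-2s}/(2s)$ and $f'(\rho)=2s\rho(1-\rho^2)^{s-1}$. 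The boundary term at $\rho=0$ vanishes because $1-(1-\rho^2)^s\sim s\rho^2$, while the one at $\rho=1$ contributes exactly $-1/(2s)$, cancelling the outer-ball integral. What remains is
\begin{equation*}
\frlap u(0)=C(n,s)\,\omega_n\int_0^1\rho^{1-2s}(1-\rho^2)^{s-1}\,d\rho
=C(n,s)\,\frac{\omega_n}{2}\int_0^1 t^{-s}(1-t)^{s-1}\,dt
=C(n,s)\,\frac{\omega_n}{2}B(s,1-s),
\end{equation*}
after the change of variable $t=\rho^2$ and the definition of the Beta function.

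To promote this identity from $x=0$ to all of $B_1$, I would invoke Theorem~\ref{theorem:thm2}. Let $c_0:=C(n,s)\frac{\omega_n}{2}B(s,1-s)$ and define $w(x):=c_0\int_{B_r}G(x,y)\,dy$ (with $r=1$); by Theorem~\ref{theorem:thm2} $w$ is the unique continuous function vanishing outside $B_1$ and satisfying $\frlap w=c_0$ in $B_1$. The plan is to show $u=w$, which forces $\frlap u\equiv c_0$ throughout $B_1$. This identification is done by evaluating the Green-function integral explicitly: using the formula of Theorem~\ref{theorem:thm1} and the change of variable $t=r_0(x,y)$, together with the same type of Fubini/Beta-function manipulations carried out in Subsection~\ref{thm1}, one checks that $\int_{B_1}G(x,y)\,dy$ equals $(1-|x|^2)^s/c_0$, matching $u(x)/c_0$.

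The main obstacle is this last paragraph: computing $\frlap u(x)$ directly for general $x\in B_1$ is awkward because of the shifted singularity, so the cleanest route is the indirect one through the Green function, but carrying out $\int_{B_1}G(x,y)\,dy$ in closed form requires a careful sequence of changes of variables and the hypergeometric identities already used in the proof of Theorem~\ref{theorem:thm1}. The origin calculation itself is routine calculus; the algebraic bookkeeping needed to verify $u=w$ in full generality is where the real work lies.
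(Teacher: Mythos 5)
Your computation of $\frlap u(0)$ is correct and cleanly executed: passing to polar coordinates, integrating by parts with $f=1-(1-\rho^2)^s$ and $g'=\rho^{-1-2s}$ (the boundary term at $\rho=1$ exactly cancelling the outer-ball contribution $1/(2s)$), and then recognising the Beta function after the substitution $t=\rho^2$ all check out, and you land squarely on $\frlap u(0)=C(n,s)\frac{\omega_n}{2}B(s,1-s)$.

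The genuine gap is the extension to general $x\in B_1$. The logical scaffolding is fine in principle: if one can show that $v(x):=\int_{B_1}G(x,y)\,dy$ equals $K(1-|x|^2)^s$ for some constant $K$, then by Theorem~\ref{theorem:thm2} one gets $\frlap[(1-|x|^2)_+^s]=1/K$ \emph{constant} on $B_1$, and matching with the origin computation fixes $1/K=C(n,s)\frac{\omega_n}{2}B(s,1-s)$. (Note you don't need to know $c_0$ in closed form in advance, so be careful with the phrasing ``equals $(1-|x|^2)^s/c_0$''; what you really need is proportionality, with the constant then read off at $x=0$.) But the claim that $\int_{B_1}G(x,y)\,dy\propto(1-|x|^2)^s$ is exactly as hard as the statement you are trying to prove, and it is \emph{not} ``the same type of Fubini/Beta-function manipulation'' as in Subsection~\ref{thm1}. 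If you apply Fubini to the inner $t$-integral, the superlevel set $\{y\in B_1: r_0(x,y)>t\}$ is an off-center ball $B_{\rho(t)}(a(t))$ (with $a(t)=\frac{t\,x}{1-|x|^2+t}$ and $\rho(t)=\frac{(1-|x|^2)\sqrt{1+t}}{1-|x|^2+t}$), and you are then facing $\int_{B_{\rho}(a)}|z|^{2s-n}\,dz$, an integral of the Riesz kernel over a ball not centred at the singularity, which does not reduce to a single Beta integral and requires a separate hypergeometric evaluation. This is the same order of difficulty as the direct computation of $\frlap u(x)$ at a general interior point. You acknowledge this as ``the real work,'' but the proposal asserts it rather than doing it, so the proof is incomplete.

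For comparison, the paper does not carry out a proof at all; it defers to the computations in \cite{Dyda,Dyda2} and to Section~3.6 of \cite{nonlocal}, where $\frlap u(x)$ is evaluated \emph{directly} for every $x\in B_1$ by a change of variables (a point inversion centred at $x$), not by going through the Green function. That route spends its effort once on a well-chosen substitution and avoids needing Theorems~\ref{theorem:thm1} and~\ref{theorem:thm2} at all. Your indirect route, if you actually carry the Green-function integral to completion, would be a genuinely different argument, but as written the hard step has merely been relocated, not resolved.
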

{The more general case (more precisely, for the function $u(x) =(1-|x|^2)_{+}^p$ for any $p>-1$)  was proved in \cite{Dyda,Dyda2}. With small modifications with respect to the general case, the proof of Lemma \ref{dydares} can be also found in Section 3.6 in \cite{nonlocal}.}

\begin{theorem}\label{thm:Cc}
The constant $C(n,s)$ introduced in \eqref{GCNS} is given by
	\begin{equation} \label{cnscomputed} C(n,s) = \frac{2^{2s} s \Gamma\left(\frac{n}2 +s\right)} {\pi^{\frac{n}2} \Gamma(1-s)}.\end{equation}
\end{theorem}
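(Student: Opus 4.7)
The plan is to invoke the Green-function representation for the Dirichlet Poisson problem on the unit ball with the explicit test function $u(x)=(1-|x|^2)_+^s$, whose fractional Laplacian has just been computed in Lemma~\ref{dydares}. Setting $r=1$ and $h_0 := C(n,s)\,\frac{\omega_n}{2}\,B(s,1-s)$, that lemma says $\frlap u = h_0$ pointwise in $B_1$, while $u\equiv 0$ outside $B_1$, $u\in C(\Rn)$, and $u$ is smooth in the open ball. Thus $u$ is a continuous pointwise solution of problem \eqref{PoissoneqD} with constant forcing $h_0$, and by the uniqueness part of Theorem~\ref{theorem:thm2} we must have $(1-|x|^2)_+^s = h_0\int_{B_1} G(x,y)\,dy$ for every $x\in B_1$. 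Evaluating at $x=0$ reduces the problem to computing a single number: $1 = h_0 \int_{B_1} G(0,y)\,dy$, so everything boils down to evaluating $I:=\int_{B_1}G(0,y)\,dy$.

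For $n\neq 2s$, I would use formula \eqref{forgkns} at $x=0$ with $r=1$, noting that $r_0(0,y)=(1-|y|^2)/|y|^2$, so $G(0,y)=\kappa(n,s)\,|y|^{2s-n}\int_0^{(1-|y|^2)/|y|^2}\frac{t^{s-1}}{(t+1)^{n/2}}\,dt$. Passing to polar coordinates and exchanging the order of integration (the inequality $t<(1-\rho^2)/\rho^2$ is equivalent to $\rho<(t+1)^{-1/2}$) gives
\[
I = \kappa(n,s)\,\omega_n \int_0^{\infty}\frac{t^{s-1}}{(t+1)^{n/2}}\!\left(\int_0^{(t+1)^{-1/2}}\!\rho^{2s-1}\,d\rho\right) dt
= \frac{\kappa(n,s)\,\omega_n}{2s}\int_0^{\infty}\frac{t^{s-1}}{(t+1)^{n/2+s}}\,dt,
\]
and the last integral is the Beta function $B(s,n/2)=\Gamma(s)\Gamma(n/2)/\Gamma(n/2+s)$.

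The final step is bookkeeping: insert the value $\kappa(n,s)=\Gamma(n/2)/(2^{2s}\pi^{n/2}\Gamma^2(s))$ from Theorem~\ref{theorem:kns}, the surface measure $\omega_n=2\pi^{n/2}/\Gamma(n/2)$, and the identity $B(s,1-s)=\Gamma(s)\Gamma(1-s)$ into the relation $1=h_0 I$. Several Gamma factors cancel pairwise and one is left with $C(n,s)=2^{2s}\,s\,\Gamma(n/2+s)/(\pi^{n/2}\Gamma(1-s))$, as claimed. The exceptional case $n=2s$ (which forces $n=1$, $s=1/2$) is handled separately by the same strategy, this time with the logarithmic formula \eqref{formn1s12} for $G(0,y)$ and the already-known $\kappa(1,1/2)=1/\pi$; a direct one-dimensional integration of $\int_{-1}^{1}\log\!\frac{1+\sqrt{1-y^2}}{|y|}\,dy$ yields $C(1,1/2)=1/\pi$, in agreement with \eqref{cnscomputed}.

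The main obstacle is the Fubini step combined with the constant-chasing in the last display: there are a lot of $\pi$'s, $\Gamma$'s, $2^{2s}$'s and $s$-factors floating around, and each must be tracked carefully. Conceptually, however, the proof is essentially a one-line consistency check: Theorem~\ref{theorem:thm2} combined with Lemma~\ref{dydares} forces the product $C(n,s)\cdot(\omega_n/2)\cdot B(s,1-s)\cdot I$ to equal $1$, and the rest is algebra with Euler's Gamma and Beta functions.
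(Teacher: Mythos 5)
Your proposal is correct and follows essentially the same route as the paper's proof: invoke Lemma~\ref{dydares} together with Theorem~\ref{theorem:thm2} to get $1=C(n,s)\,\tfrac{\omega_n}{2}B(s,1-s)\int_{B_1}G(0,y)\,dy$, evaluate that integral via polar coordinates and Fubini--Tonelli to produce $\tfrac{\omega_n}{2s}B(s,\tfrac n2)$, then insert $\kappa(n,s)$ from Theorem~\ref{theorem:kns}. The $n=2s$ case is handled in exactly the same way in both arguments.
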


\begin{proof}[Proof of Theorem \ref{thm:Cc}]
By Lemma \ref{dydares} we have that in $B_1$
 \[\frlap u(x) = C(n,s) \frac{\omega_n}{2} B(1-s,s).\]
We use Theorem \ref{theorem:thm2} and for $n\neq 2s$, we obtain that
	\begin{equation*}
		\begin{split}
		u(x) = &\; \int_{B_1} C(n,s) \frac{\omega_n}{2}  B(1-s,s) G(x,y) dy \\
	= &\;  C(n,s) \frac{\omega_n}{2} B(1-s,s)  \kappa(n,s) \int_{B_1} |x-y|^{2s-n} \bigg( \int_0^{r_0(x,y)} \frac {t^{s-1}}{(t+1)^{\frac{n}{2}}}\, dt\bigg) \, dy.
		\end{split}
	\end{equation*}
We compute the latter identity in zero and have that
 \begin{equation}
		\begin{split}
		&1= C(n,s) \frac{\omega_n}{2} B(1-s,s)  \kappa(n,s) \int_{B_1} |y|^{2s-n} \bigg( \int_0^{\frac{1-|y|^2}{|y|^2}} \frac {t^{s-1}}{(t+1)^{\frac{n}{2}}}\, dt\bigg) \, dy .\label{constant}
		\end{split}
	\end{equation}
We compute the double integral, by using Fubini-Tonelli's theorem
 \begin{equation*}
		\begin{split}
		\int_{B_1} |y|^{2s-n} \bigg( \int_0^{\frac{1-|y|^2}{|y|^2}} \frac {t^{s-1}}{(t+1)^{\frac{n}{2}}}\, dt\bigg) \, dy  =&\; \omega_n   \int_0^1 \rho^{2s-1} \bigg( \int_0^{\frac{1-\rho^2}{\rho^2}} \frac {t^{s-1}}{(t+1)^{\frac{n}{2}}}\, dt\bigg) \, d\rho\\
	 = &\; \omega_n \int_0^{\infty} \frac {t^{s-1}}{(t+1)^{\frac{n}{2}}} \bigg( \int_0^{\frac{1}{\sqrt{t+1}}} \rho^{2s-1} \, d \rho\bigg) \, dt \\=&\; \frac{\omega_n}{2s} \int_0^{\infty} \frac {t^{s-1}}{(t+1)^{\frac{n}{2}+s}}\, dt =   \frac{\omega_n}{2s} B\bigg(s,\frac{n}{2}\bigg).
		\end{split}
	\end{equation*}
By inserting this, the value of $\kappa(n,s)$ from Theorem \ref{theorem:kns} and the measure of the $(n-1)$-dimensional unit sphere $\omega_n=({ 2\pi^{n/2}})/{\Gamma(n/2)}$ into \eqref{constant} and using \eqref{betagamma} we obtain that
\[ C(n,s) = \frac{2^{2s} s \Gamma(\frac{n}2 +s)} {\pi^{\frac{n}2 }\Gamma(1-s)}.\]
For $n=2s$ we have 	that $\frlap u(x) = C\left(1, 1/2\right) \pi.$
Thanks to Theorem \ref{theorem:thm2}
	\[ u(x) =   C\left(1, \frac{1}2\right) \pi \int_{-1}^1 G(x,y)\, dy.\]
	Using formula \eqref{formn1s12} and computing $u$ at zero, we obtain that
		\[ \begin{aligned} 1 =  C\left(1, \frac{1}2\right)  \int_{-1}^1 \log \frac{ 1+\sqrt{1-y^2}}{|y|}\, dy
			=  {\pi} C\left(1, \frac{1}2\right) .\end{aligned} \]
			Hence $ C\left(1, 1/2\right)  = {1}/{\pi}$ and this concludes the proof of the Theorem.
\end{proof}

\appendix

\section{Appendix} \label{appendix}
\subsection{The Gamma, Beta and hypergeometric functions}
We recall here a few notions on the special functions Gamma, Beta and hypergeometric (see \cite{gamma}, Chapters 6 and 15 for details).\\

\textbf{Gamma function.} The Gamma function is defined  for $x>0$ as (see \cite{gamma}, Chapter 6):
\begin{equation} \Gamma(x):=\int_0^{\infty}t^{x-1}e^{-t}\, dt.\label{gamma}\end{equation}
The Gamma function has an unique continuation to the whole $\mathbb{R}$ except at the negative integers, by means of Euler's infinite product.
We have that $\Gamma(1)=\Gamma(2)=1$ and $\Gamma({1}/{2})= \sqrt{\pi}$. We  also recall the next useful identities:
 \begin{align}
			&\Gamma(n+1)= n! &   \mbox{ for any } &n \in \N,\label{gammafac}\\
			&\Gamma(x+1)=x\Gamma(x) &   \mbox{ for any } &x >0,   \label{gamxx1}\\
			&\frac{\Gamma(1/2+x)}{\Gamma(2x)} =  \frac{\sqrt{\pi} 2^{1-2x} }{\Gamma(x)}&   \mbox{ for any } &x >0 , \label{gam2}\\
			 	&\Gamma(s)\Gamma(1-s) \;\;=  \frac{\pi}{\sin (\pi s)}&   \mbox{ for } &s\in (0,1),   \label{gam3}\\
			& \Gamma(1/2-s) \Gamma(1/2+s) =   \frac{\pi}{\cos(\pi s)}&   \mbox{ for } &s\in (0,1),   \label{gam1}\\
	 	 &  \Gamma(1-s) =  (-s)\Gamma(-s) &   \mbox{ for } &s\in (0,1)\label{gam4}.
	 \end{align}

\textbf{Beta function.}
The Beta function can be represented as an integral (see \cite{gamma}, Section 6.2), namely for $x, y>0$
	 \begin{equation}
			 B(x,y) =  \int_0^{\infty} \frac {t^{x-1}}{(1+t)^{x+y}} \, dt \label{beta}
	\end{equation}
and equivalently
	\begin{equation}
 	B(x,y)=  \int_0^1 t^{x-1}(1-t)^{y-1} \, dt \label{betazerouno}.
	\end{equation}
Furthermore, in relation to the Gamma function we have the identity
	\begin{equation}
B(x,y)= \frac {\Gamma(x)\Gamma(y)} {\Gamma(x+y)} \label{betagamma}.
	\end{equation}

\textbf{Hypergeometric functions.} There are several representations for the hypergeometric function (see \cite{gamma}, Chapter 15). We recall the ones useful for our own purposes. \\

\emph{(1)  Gauss series}
		\begin{equation}  F(a,b,c,w) = \sum_{k=0}^{\infty} \frac{ (a)_k (b)_k }{(c)_k} \frac{w^k} { k!},\label{gausshyp} \end{equation}
	where $(q)_k$ is the Pochhammer symbol defined by:
		\begin{equation}\label{Pochh}
		(q)_k = \begin{cases}
			1   &\mbox{ for } k = 0 ,\\
  		q(q+1) \cdots (q+k-1) &\mbox{ for } k > 0.
		 \end{cases}
		\end{equation} \\
The interval of convergence of the series is $|w|\leq 1$. The Gauss series, on its interval of convergence, diverges when $c-a-b\leq -1$, is absolutely convergent when $c-a-b>0$ and is conditionally convergent when $|w|<1$ and $-1<c-a-b\leq 0$.	
Also, the series is not defined when $c$ is a negative integer $-m$, provided $a$ or $b$ is a positive integer $n$ and $n<m$.

Some useful elementary computations are
	\begin{subequations}	
		 \begin{align}
		&F(a,b,b,w)=(1-w)^{-a}.\label{hypelc1}\\
		&F\Big(a, \frac{1}{2}+a, \frac{1}{2},w^2\Big) = \frac{ (1+w)^{-2a} +(1-w)^{-2a}}{2}.\label{hypelc2}
		\end{align}
	\end{subequations}

\emph{(2)  Integral representation}
	\begin{equation}
	 F(a,b,c,w) :=  \frac{\Gamma(c)}{\Gamma(b) \Gamma(c-b)} \int_0^1 t^{b -1} (1-t)^{c-b-1} (1- w t)^{-a}  \, dt .\label{inthyp}
	\end{equation}
The integral is convergent (thus $F$ is defined) when $c>b>0$ and $|w|<1$. \\

\emph{ (3) Linear transformation formulas}

From the integral representation \eqref{inthyp}, the following transformations can be deduced.
	\begin{subequations}
		 \begin{align}	
		F(a,b,c,w) =&\;  (1-w)^{c-a-b} F(c-a,c-b,c,w), \label{hyp1}\\
			 =&\;(1-w)^{-a} F\Big(a,c-b,c, \frac{w}{w-1}\Big),\label{hyp2} \\
				 =&\;(1-w)^{-b} F\Big(b,c-a,c,\frac{w}{w-1}\Big),\label{hyp3}\\
				= &\;\frac{\Gamma(c)\Gamma(c-a-b)}{\Gamma(c-a)\Gamma(c-b)} F(a,b,a+b-c+1,1-w) \notag \\&\;+ (1-w)^{c-a-b} \frac{\Gamma(c)\Gamma(a+b-c)}{\Gamma(a)\Gamma(b)}F(c-a,c-b,c-a-b+1, 1-w), \notag\\ &\text{ when } 0<w<1\label{hyp4} .
		\end{align}
	\end{subequations}

\subsection{Point inversion transformations}
The purpose of this appendix is to recall some basic geometric features of the point inversion, related to the so-called Kelvin transformation.

Let $r>0$ to be fixed.

\begin{definition}
Let $x_0\in B_r$ be a fixed point. The inversion with center $x_0$ is a point transformation that maps an arbitrary point $y \in {\Rn}\setminus \{x_0\}$ to the point $ \mathbf{K}_{x_0}(y)$ such that the points $y$, $x_0$, $ \mathbf{K}_{x_0}(y)$ lie on one line, $x_0$ separates $y$ and $\mathbf{K}_{x_0}(y)$ and
	\begin{equation}
	\mathbf{K}_{x_0}(y):=x_0- \frac{r^2-|x_0|^2}{|y-x_0|^2} \big( y-x_0\big). \label{trk}
	\end{equation}
\end{definition}
This is a bijective map from ${\Rn}\setminus \{ x_0\} $ onto itself. Of course, $\mathbf{K}_{x_0}\big( \mathbf{K}_{x_0}(x) \big) = x$. When this does not generate any confusion, we will use the notation $y^*:=\mathbf{K}_{x_0}(y)$ and $x^*:=\mathbf{K}_{x_0}(x)$ to denote the inversion of $y$ and $x$ respectively, with center at $x_0$.

\begin{remark}
It is not hard to see, from definition \eqref{trk}, that
	 \begin{equation}
	 |y^*-x_0||y-x_0|=r^2-|x_0|^2. \label{tr}
	\end{equation}

\end{remark}

\begin{proposition}
\label{proposition:pointinv}
Let $x_0 \in B_r$ be a fixed point, and $x^*$ and $y^*$ be the inversion of $x \in {\Rn}\setminus \{x_0\}$ respectively $y \in {\Rn}\setminus \{x_0\}$ with center at $x_0$.
Then:
	\begin{subequations}
		\begin{align}
			\text{ a) }  & \text{points on the sphere } \partial B_r \text{ are mapped into points on the same sphere},  \quad \quad \quad\quad \quad \quad \quad \quad \quad \notag 
			\\
			 \text{ b) } & \text{points outside the sphere } \partial B_r  \text{ are mapped into points inside the sphere}, \notag 
			 \\
	          		\text{ c) } &\frac{|y-x_0|^2}{(r^2-|x_0|^2)(r^2-|y|^2)}=\frac{1}{|y^*|^2-r^2},  \label{firsttr}	\\
			\text{ d) } & \frac{dy}{|y-x_0|^n}=\frac{dy^*}{|y^*-x_0|^n},  \label{dxtr} \\
			\text{ e) } & |y^*-x^*|=(r^2-|x_0|^2)\frac{|y-x|}{|y-x_0||x-x_0|}.  \label{sectr}	
		\end{align}
	\end{subequations}
\end{proposition}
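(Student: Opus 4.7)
The overall strategy is to observe that every claim follows by direct algebraic manipulation from the defining formula \eqref{trk}, once I extract the key identity
\[ |y^*|^2 - r^2 = \frac{(r^2-|x_0|^2)(r^2-|y|^2)}{|y-x_0|^2}, \]
which will simultaneously establish (a), (b), (c). To derive it, I would expand
\[ |y^*|^2 = \Bigl|x_0 - \frac{r^2-|x_0|^2}{|y-x_0|^2}(y-x_0)\Bigr|^2 \]
and use the elementary identity $2\,x_0\cdot(y-x_0) = |y|^2-|x_0|^2-|y-x_0|^2$ to replace the cross term. The terms involving $|y-x_0|^2$ in the denominator combine cleanly and one is left with $r^2 + (r^2-|x_0|^2)(r^2-|y|^2)/|y-x_0|^2$. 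Plugging in $|y|=r$ gives (a); $|y|>r$ makes the added term negative, giving (b); rearranging gives exactly (c).

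For part (d), I would change variables $z = y - x_0$, reducing the map to the standard inversion $z \mapsto -Cz/|z|^2$ with $C := r^2 - |x_0|^2$. The Jacobian matrix is $\frac{-C}{|z|^2}\bigl(I - \frac{2zz^{\mathrm{T}}}{|z|^2}\bigr)$; since $I - 2zz^{\mathrm{T}}/|z|^2$ is a Householder reflection with determinant $-1$, we get $|\det D(y\mapsto y^*)| = C^n/|y-x_0|^{2n}$. Combining this with $|y^*-x_0|^n = C^n/|y-x_0|^n$ (which is just \eqref{tr} raised to the $n$-th power) yields identity \eqref{dxtr} after cancellation.

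For part (e), I would subtract the two definitions to get
\[ y^* - x^* = C\Bigl(\frac{x-x_0}{|x-x_0|^2} - \frac{y-x_0}{|y-x_0|^2}\Bigr) = C\,\frac{(x-x_0)|y-x_0|^2 - (y-x_0)|x-x_0|^2}{|x-x_0|^2|y-x_0|^2}. \]
Then I would compute the squared norm of the numerator by expansion; the cross term produces $-2|x-x_0|^2|y-x_0|^2(x-x_0)\cdot(y-x_0)$, and the three terms reorganize as $|x-x_0|^2|y-x_0|^2\bigl(|y-x_0|^2 - 2(x-x_0)\cdot(y-x_0) + |x-x_0|^2\bigr) = |x-x_0|^2|y-x_0|^2|y-x|^2$. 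Taking the square root gives \eqref{sectr} directly.

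No step is really a serious obstacle; the only risk is bookkeeping. The trickiest piece is the key identity for $|y^*|^2-r^2$, so I would state and prove it first as a lemma of the proof and then derive (a), (b), (c) from it as corollaries, before turning to the Jacobian computation for (d) and the norm computation for (e).
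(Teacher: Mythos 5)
Your proposal is correct but takes a genuinely different route from the paper, which only \emph{sketches} the proof via plane geometry: parts a)--c) and e) are argued with auxiliary perpendiculars, the Pythagorean Theorem applied to several triangles, and the defining relation \eqref{tr}, with figures left to the reader; part d) is handled by reducing via translation and rotation invariance to the one-variable case along the $e_1$ axis. You instead proceed purely algebraically. Your key identity $|y^*|^2-r^2 = (r^2-|x_0|^2)(r^2-|y|^2)/|y-x_0|^2$ (obtained by expanding $|y^*|^2$ and substituting $2\,x_0\cdot(y-x_0)=|y|^2-|x_0|^2-|y-x_0|^2$) immediately yields a), b), c); the Jacobian computation via the Householder factor $I-2zz^{\mathrm{T}}/|z|^2$ gives $|\det D(y\mapsto y^*)| = (r^2-|x_0|^2)^n/|y-x_0|^{2n}$, which combined with \eqref{tr} proves d); and expanding the squared norm of $C\bigl((x-x_0)/|x-x_0|^2-(y-x_0)/|y-x_0|^2\bigr)$ gives e). All the algebra checks out. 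What your approach buys is self-containedness and verifiability: it needs no figure, no separate reduction to a canonical frame for d), and every step is a direct computation. The paper's geometric sketch is shorter to state but relies on the reader filling in the system of Pythagorean equations; your version would serve equally well (or better) in a self-contained exposition of this Proposition.
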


{The Kelvin point inversion transformation is well known (see, for instance, the Appendix in \cite{Landkof}) and elementary geometrical considerations can be used to prove this lemma. We give here a sketch of the proof. }
\begin{proof}[Sketch of the proof]
A simple way to prove claims a) to c) is to consider the first triangle in Figure \ref{fign:figure1}.

\begin{figure}[htpb]
	\hspace{0.85cm}
	\begin{minipage}[b]{0.98\linewidth}
	\centering
	\includegraphics[width=0.98\textwidth]{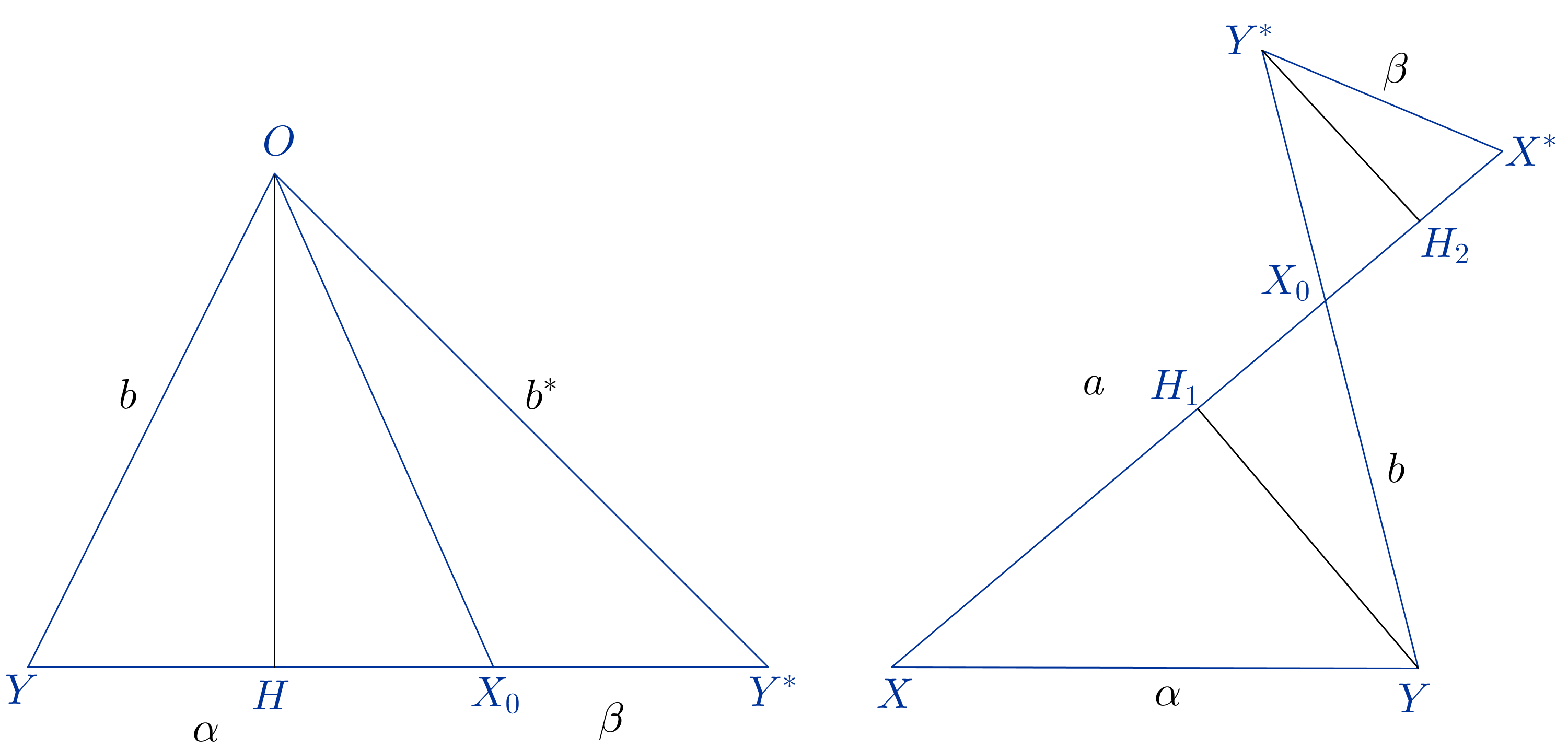}
	\caption{Inversion of $x, y$ with center at $x_0$}
	\label{fign:figure1}
	\end{minipage}
	\end{figure}
%
%
We denote
 $b:=|OY|=|y|$, $b^*:=|OY^*|=|y^*|$, $\alpha:=|X_0Y|=|y-x_0|$ and $\beta := |X_0Y^*|=|y^*-x_0|$.
 Let $OH$ be the perpendicular from $O$ onto $YY^*$.
 We apply the Pythagorean Theorem in the three triangles $\triangle OYH, \, \triangle OHX_0, \, \triangle OHY^*$,  add  the equation \eqref{tr} and by solving the system, one gets that
	 \begin{equation*}
		 {b^*}^2= \frac{\beta r^2+\alpha r^2-\beta b^2}{\alpha}. \label{sol}
	\end{equation*} From this, claims a) to c) follow after elementary computations.
	
In order to prove d), without loss of generality, one can consider the point inversion of radius one with center at zero
	$y^*={-y}/{|y|^2} $ and take its derivative. Since the point inversion transformation is invariant under rotation, we can assume that $y=|y|e_1$ and the desired result plainly follows.
%

To prove e), see the Appendix in \cite{Landkof}, or consider the second triangle in Figure \ref{fign:figure1}. We denote
$a :=|X_0X|=|x-x_0|$,
$b:= |X_0Y|=|y-x_0|$,
$\alpha :=|XY|=|x-y|$ and $\beta:=|X^*Y^*|=|x^*-y^*|$. Let $YH_1$ and $Y^*H_2$ be perpendiculars from $Y$, respectively $Y^*$ onto the segment $XX^*$.
 By applying the Pythagorean Theorem in the four triangles $\triangle X_0YH_1,\, \triangle XYH_1, \,  \triangle X_0Y^*H_2$, $\triangle X^*Y^*H_2$, adding relation \eqref{tr}  and using that $YH_1$ is parallel to $Y^*H_2$, one gets after solving the system that
\begin{equation*}  \beta=\frac{(r^2-|x_0|^2) \alpha}{ab}, \end{equation*}	which is the desired result.
\end{proof}

\subsection{Some useful integral identities}

We present here a few detailed computations related to the functions $\Phi$, $A_r$ and $P_r$ and some other useful integral identities.

\begin{lemma} For any $r>0$
\begin{equation}  \int _{\obal} A_r(y) \, dy =1. \label{Ir} \end{equation}
\end{lemma}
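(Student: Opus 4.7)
The plan is to reduce the multidimensional integral to a one-dimensional one via spherical coordinates, and then to recognize the resulting one-dimensional integral as a Beta function that can be evaluated explicitly using the Gamma function identities from the Appendix. The value of $c(n,s)$ given in \eqref{ctcns} has been designed precisely so that everything collapses to $1$.

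First I would pass to spherical coordinates. Writing $y = \rho \omega$ with $\rho > r$ and $\omega \in S^{n-1}$, and using $|S^{n-1}| = \omega_n = 2\pi^{n/2}/\Gamma(n/2)$, the definition \eqref{smeandefn} gives
\begin{equation*}
\int_{\mathbb{R}^n \setminus B_r} A_r(y)\, dy \;=\; c(n,s)\, r^{2s}\, \omega_n \int_r^{\infty} \frac{\rho^{n-1}}{(\rho^2 - r^2)^s\, \rho^n}\, d\rho \;=\; c(n,s)\, r^{2s}\, \omega_n \int_r^{\infty} \frac{d\rho}{(\rho^2 - r^2)^s\, \rho}.
\end{equation*}
The key point is that the angular integration is trivial because $A_r$ is radial, so the factor $\omega_n$ emerges without work.

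Next I would evaluate the radial integral by the substitution $t = r^2/\rho^2$, which maps $\rho \in (r, \infty)$ onto $t \in (0, 1)$ and satisfies $(\rho^2 - r^2)^s = r^{2s}(1-t)^s t^{-s}$ and $d\rho/\rho = -\tfrac{1}{2}\, dt/t$. A short computation turns the radial integral into
\begin{equation*}
\int_r^{\infty} \frac{d\rho}{(\rho^2 - r^2)^s\, \rho} \;=\; \frac{1}{2 r^{2s}} \int_0^1 t^{s-1}(1-t)^{-s}\, dt \;=\; \frac{1}{2 r^{2s}}\, B(s, 1-s),
\end{equation*}
using the representation \eqref{betazerouno}.

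Finally I would combine these ingredients and simplify using the Euler reflection formula $\Gamma(s)\Gamma(1-s) = \pi/\sin(\pi s)$ from \eqref{gam3} together with \eqref{betagamma}, so that $B(s, 1-s) = \pi/\sin(\pi s)$. Substituting the explicit value $c(n,s) = \Gamma(n/2)\sin(\pi s)/\pi^{n/2+1}$ from \eqref{ctcns} and $\omega_n = 2\pi^{n/2}/\Gamma(n/2)$, the factors $\Gamma(n/2)$, $\sin(\pi s)$, $r^{2s}$, and the powers of $\pi$ and $2$ cancel exactly, yielding $1$. There is no serious obstacle here; the only thing to watch is the bookkeeping of constants, which is arranged precisely so that the identity holds on the nose.
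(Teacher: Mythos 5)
Your proof is correct and follows essentially the same route as the paper's: reduce by spherical symmetry to a one-dimensional radial integral, change variables to recognize a Beta integral, and close with the reflection formula $\Gamma(s)\Gamma(1-s)=\pi/\sin(\pi s)$. The only cosmetic difference is the choice of substitution (you use $t=r^2/\rho^2$ leading to the $[0,1]$ representation \eqref{betazerouno}, while the paper uses $z=(\rho/r)^2-1$ leading to the $[0,\infty)$ representation \eqref{beta}); the two are equivalent.
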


\begin{proof}
Using \eqref{smeandefn} and passing to polar coordinates we have that
 	\begin{equation*}
 		\begin{split}
 		 \int _{\obal} A_r(y) \, dy= &\;  c(n,s) \int _{\obal}   \frac {r^{2s} } {(|y|^2 -r^2)^s|y|^n}\, dy\\
 			= &\; c(n,s)  \omega_n  \int_r ^{\infty}   \frac { r^{2s} }{\rho  (\rho^2-r^2)^s} \, d\rho,
 		\end{split}
 	\end{equation*}
where $\omega_n={ 2\pi^{n/2}}/{\Gamma(n/2)}$  is the measure of the $(n-1)$-dimensional unit sphere.
We change the variable $z= (\rho/r)^2-1$ and have that
	\begin{equation}
\int _{\obal} A_r(y) \, dy
  = \frac {c(n,s)}{2} 	 \omega_n  \int_0 ^{\infty}  \frac  {1} {(z+1) z^s} \, dz .  \label{Ircalc}   	 \end{equation}
We apply identities \eqref{beta} and \eqref{betagamma} and use identity \eqref{gam3} to obtain that
	\begin{equation} \int_0^{\infty}    \frac  {1} {(z+1) z^s} \, dz  = \Gamma (1-s) \Gamma(s) =  \frac{\pi}{\sin{\pi s} }.\label{betappl} \end{equation}
Using the definition \eqref{ctcns} of $c(n,s)$ it follows that $ \int _{\obal} A_r(y) \, dy= 1$, as desired.
\end{proof}

\begin{lemma} \label{lemmaip}
For any $r>0$ and any $x \in B_r$
\begin{equation} \int_{\obal}P_r(y,x) \, dy  =1. \label{Ip} \end{equation}
\end{lemma}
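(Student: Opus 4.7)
The plan is to reduce to a radial computation and evaluate directly using the angular identity from the Appendix that was already exploited in the proof of Theorem~\ref{theorem:thm1}. By scaling ($y = r\tilde y$) it suffices to treat $r=1$; by rotational symmetry of the integration domain $\R^n\setminus B_1$ we may further assume $x = t\, e_n$ with $t=|x|\in[0,1)$. Using hyperspherical coordinates $y = \rho\sigma$ with $\rho>1$ and $\sigma\in S^{n-1}$, and writing $\sigma_n=\cos\theta$, $|y-te_n|^2 = \rho^2 - 2\rho t\cos\theta + t^2$, the integral becomes
\[
\int_{\obal} P_1(y,te_n)\,dy
= c(n,s)\,(1-t^2)^s \int_1^\infty \!\!\frac{\rho^{n-1}}{(\rho^2-1)^s}
\left(\int_{S^{n-1}}\!\!\frac{d\sigma}{(\rho^2-2\rho t\sigma_n+t^2)^{n/2}}\right)d\rho.
\]

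Next I would evaluate the inner angular integral. For $t>0$, writing $\tau=\rho/t>1$ factors out $t^{-n}$ and reduces the angular integral to $\int_0^\pi \sin^{n-2}\theta\,(\tau^2-2\tau\cos\theta+1)^{-n/2}d\theta$, to which identity~\eqref{prop1} (Appendix) applies. After simplification one finds
\[
\int_{S^{n-1}}\!\frac{d\sigma}{(\rho^2-2\rho t\sigma_n+t^2)^{n/2}}
= \frac{\omega_n}{\rho^{n-2}(\rho^2-t^2)},
\]
where the explicit value of $\int_0^\pi \sin^{n-2}\alpha\,d\alpha = \sqrt{\pi}\,\Gamma((n-1)/2)/\Gamma(n/2)$ is used to combine with $|S^{n-2}|$ and produce the clean constant $\omega_n$; by continuity, the formula extends to $t=0$ as well. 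This is the key step and, while elementary, is the main technical moment of the proof.

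Substituting back yields
\[
\int_{\obal} P_1(y,te_n)\,dy
= \frac{c(n,s)\omega_n(1-t^2)^s}{2}\int_1^\infty\!\frac{2\rho\,d\rho}{(\rho^2-1)^s(\rho^2-t^2)}.
\]
Then I would do the two substitutions $u=\rho^2-1$, followed by $w=u/(1-t^2)$, turning the radial integral into
\[
(1-t^2)^{-s}\int_0^\infty\!\frac{dw}{w^s(w+1)} = (1-t^2)^{-s}\,\frac{\pi}{\sin(\pi s)},
\]
exactly as in the computation of \eqref{Ir} (cf.\ \eqref{betappl}). The factors $(1-t^2)^{\pm s}$ cancel, making the result independent of $x$. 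Finally, plugging in the explicit value of $c(n,s)=\Gamma(n/2)\sin(\pi s)/\pi^{n/2+1}$ from \eqref{ctcns} together with $\omega_n = 2\pi^{n/2}/\Gamma(n/2)$ gives
\[
\frac{c(n,s)\omega_n}{2}\cdot\frac{\pi}{\sin(\pi s)} = 1,
\]
as required. Thus the cancellation of $(1-t^2)^s$ between the prefactor of the Poisson kernel and the $t$-dependent part of the radial integral is what encodes the $x$-independence of the total mass.
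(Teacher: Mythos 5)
Your proof is correct and follows essentially the same route as the paper: reduction to a radial integral via hyperspherical coordinates, the angular identity that collapses the spherical average to $\omega_n/(\rho^{n-2}(\rho^2-t^2))$, and a Beta-function evaluation of the radial integral. One organizational caveat: in the paper, Proposition~\eqref{prop1} is stated in the Appendix but its proof (via the change of variables $\sin\theta/\sqrt{\rho^2-2\rho\cos\theta+1}=\sin\alpha/\rho$) is carried out \emph{inside} the proof of Lemma~\ref{lemmaip} itself, so if you invoke \eqref{prop1} here you should either supply that argument or make clear the identity is being proved independently, else the logical dependency is circular in the paper's own numbering.
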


\begin{proof}
From the definition \eqref{poissondefn} of $P_r$ we have that \[\int_{\obal}P_r(y,x) \, dy= c(n,s) \int_{\obal} \Bigg (\frac {r^2-|x|^2}{|y|^2-r^2}\Bigg)^s \frac {1}{|x-y|^n}  \, dy .\]

We make the proof for $n>3$. However, the results hold for $n\leq 3$. We change variables using the hyperspherical coordinates with radius $\rho>0$ and angles $\theta, \theta_1, \dots, \theta_{n-3} \in [0, \pi], \theta_{n-2} \in [0,2\pi]$
	\begin{equation}
		\begin{split} \label{chofvarhypersph}
		 y_1= &\rho \sin\theta\sin\theta_1 \dots \sin\theta_{n-3} \sin \theta_{n-2}\\
 	             y_2=  &\rho\sin \theta \sin \theta_1 \dots \sin \theta_{n-3} \cos \theta_{n-2} \\
   	             y_3= &\rho \sin \theta \sin \theta_1 \dots \cos \theta_{n-3} \\
   	       \dots \\
		 y_n=&\rho \cos\theta.
		\end{split}
	\end{equation}
	The Jacobian of the transformation is given by $ \rho^{n-1} \sin ^{n-2}\theta  \sin ^{n-3}\theta_1 \dots \sin \theta_{n-3} $.
We only remark that for $n=3$ the usual spherical coordinates can be used $	y_1= \rho \sin\theta\sin\theta_1,  y_2=  \rho\sin \theta \cos \theta_1  \mbox{ and } y_3= \rho \cos \theta $, while for $n=2$ and $n=1$ similar computations can be performed.

Without loss of generality and up to rotations, we assume that $x =|x|e_n$ to obtain the identity $ |x-y|^2 = \rho^2+|x|^2-2|x| \rho\cos\theta$ (see Figure~\ref{fign:xycalc} for clarity). With this change of coordinates, we obtain
%
%
	\begin{equation*}
		\begin{split}
		& \int_{\obal}P_r(y,x) \, dy\\
		=& c(n,s) (r^2  - |x|^2)^s 2\pi \prod_{k=1}^{n-3}\int_0^{\pi} \sin^k \theta \,d\theta
   \int_r^{\infty}\int_0^{\pi}  \frac{\rho^{n-1}\sin^{n-2} \theta \, d\theta \, d\rho }{(\rho^2-r^2)^s (\rho^2+|x|^2 - 2\rho |x| \cos \theta)^{n/2}} .
		\end{split}
	\end{equation*}
	\begin{figure}[htb]
	\hspace{0.85cm}
	\begin{minipage}[b]{0.78\linewidth}
	\centering
	\includegraphics[width=0.78\textwidth]{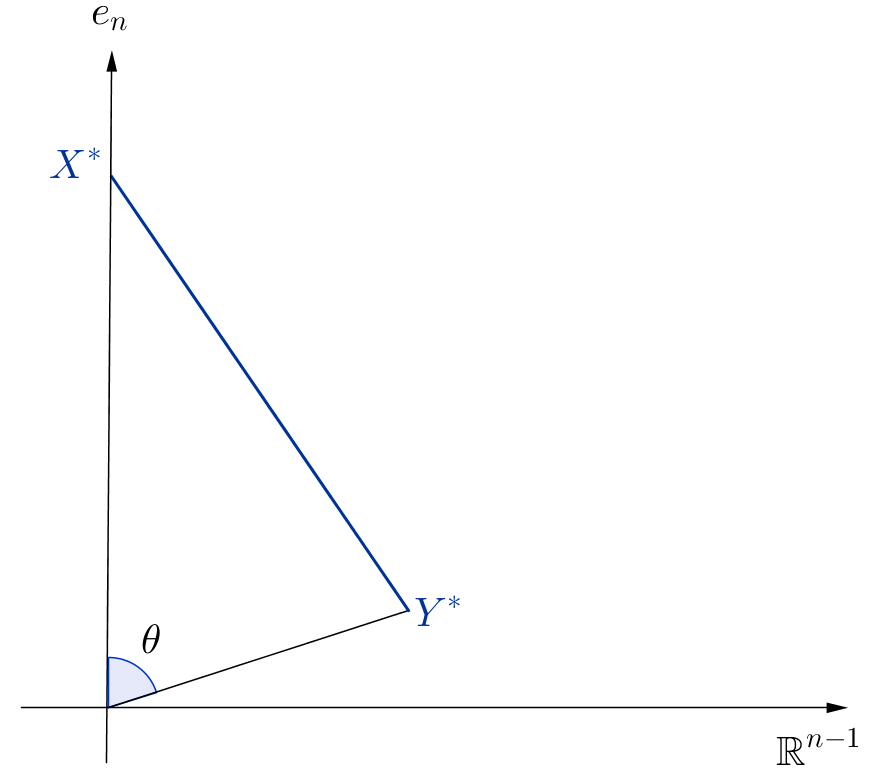}
	\caption{}
	\label{fign:xycalc}
	\end{minipage}
	\end{figure}
We do the substitution $\bar r= {r}/{|x|} $ and $\bar \rho= {\rho}/{ |x|}$ but still use $r$ and $\rho$ for simplicity and we remark that now $\rho>1$ and $r>1$.
 We obtain that
	\begin{equation}
\label{ipxxx}
		\begin{split}
	&  \int_{\obal}P_r(y,x) \, dy\\
		 =&\; c(n,s) (r^2-1)^s 2\pi  \prod_{k=1}^{n-3}\int_0^{\pi}  \sin^k\theta\,  d\theta
\int_r^{\infty} \frac {\rho^{n-1}}{(\rho^2-r^2)^s}\bigg( \int_0^{\pi} \frac  {\sin^{n-2}\theta \,d\theta} {(\rho^2+1-2\rho \cos\theta )^{n/2}}  \bigg)\, d\rho.
		\end{split}
	\end{equation}
Let
	\begin{equation*}
	 i (\rho):=\int_0^{\pi} \frac {\sin^{n-2}\theta}{(\rho^2-2\rho \cos\theta +1)^{n/2}} \, d\theta.
	\end{equation*}
We claim that, for $\rho >1$
\begin{equation} \label{aahhh}
	 i(\rho)=\frac {1} {\rho^{n-2} (\rho^2-1)}\int_0^{\pi} \sin^{n-2} \theta \, d\theta.
	\end{equation}
To prove this, we use the following change of coordinates
\begin{equation}  \frac {\sin\theta}{\sqrt {\rho^2 -2\rho \cos\theta +1 }} = \frac {\sin \alpha }{\rho}. \label{changeofvar} \end{equation}
We have that
	\begin{equation} d\theta = \Bigg( 1- \frac{\cos \alpha}{\sqrt{\rho^2-\sin^2\alpha}}\Bigg) \, d\alpha. \label{chvar} \end{equation}
{ To see this, one takes the derivative of the relation \eqref{changeofvar}
	\begin{equation}
		 \frac{(\rho \cos \theta -1)(\rho - \cos \theta)} {(\rho^2- 2\rho \cos \theta +1)^{\frac{3}{2}}}\, d\theta = \frac{\cos \alpha}{\rho}\, d \alpha \label{capduedue}
	\end{equation}
and obtains with some manipulations of \eqref{changeofvar} that}
\[  \frac{ \cos \alpha  \sqrt{\rho^2 - \sin^2\alpha} }{ \sqrt{\rho^2 - \sin^2\alpha} -\cos \alpha} =\frac{\rho(\rho \cos \theta -1)(\rho - \cos \theta)} {(\rho^2- 2\rho \cos \theta +1)^{\frac{3}{2}}} .\]
Now by changing variables we obtain that
	\begin{equation*}
		\begin{split}
		 i(\rho)= &\;\int_0^{\pi} \frac {\sin^{n-2}\theta}{(\rho^2-2\rho \cos\theta +1)^{n/2}} \, d\theta  \\
		=&\; \frac{1}{\rho^{n-2}} \int_0^{\pi}  \frac{\sin ^{n-2}\alpha \, d\alpha}{(\sqrt{\rho^2- \sin^2 \alpha} -\cos \alpha ) \sqrt{\rho^2- \sin^2 \alpha}} \\
= &\; \frac{1}{\rho^{n-2}} \int_0^{\pi}   \frac{\sin ^{n-2}\alpha (\sqrt{\rho^2- \sin^2 \alpha} +\cos \alpha ) \, d\alpha}{ (\rho^2-1) \sqrt{\rho^2- \sin^2 \alpha}} \\
= &\;\frac{1}{\rho^{n-2}(\rho^2-1) } \bigg( \int_0^{\pi} \sin ^{n-2} \alpha  \, d\alpha + \int_0^{\pi} \frac{\sin^{n-2} \alpha \, \cos \alpha}{   \sqrt{\rho^2- \sin^2 \alpha}}\, d\alpha\bigg).
		\end{split}
	\end{equation*}
By symmetry \[\int_0^{\pi} \frac{\sin^{n-2} \alpha \, \cos \alpha}{   \sqrt{\rho^2- \sin^2 \alpha}}\, d\alpha  =0,\]
therefore \[ i(\rho)= \frac{1}{\rho^{n-2}(\rho^2-1) }  \int_0^{\pi} \sin ^{n-2} \alpha  \, d\alpha .\]
We substitute this into \eqref{ipxxx} and obtain that
	\[\int_{\obal}P_r(y,x) \, dy=c(n,s) (r^2-1)^s 2\pi \prod_{k=1}^{n-2} \int_0^{\pi} \sin^k\theta\,  d\theta  \int_r^{\infty} \frac{\rho\,  d\rho}{  (\rho^2-r^2)^s (\rho^2-1)}.\]
We claim that
	\begin{equation}
	\pi \prod_{k=1}^{n-2} \int_0^{\pi}\sin^k\theta d\theta = \frac{{\pi}^{n/2}}{\Gamma (n/2)}.
	\label{sinstuff}
	\end{equation}
To prove this, we integrate by parts and obtain that
	\begin{equation*}
		\begin{split}
		 I_k=  \int_0^{\pi} \sin^k\theta \, d\theta
		=(k-1) \int_0^{\pi} \sin^{k-2}\theta \, d\theta - (k-1)\int_0^{\pi}  \sin^k \theta \, d \theta  ,
		\end{split}
	\end{equation*}
which implies that \[  I_k=  \frac{k-1}{k}\int_0^{\pi}  \sin^{k-2}\theta \, d\theta = \frac{k-1}{k} I_{k-2}.  \]
Thus we have
	\begin{equation*}
                         I_k=
		\displaystyle \begin{cases}
			\displaystyle \frac{k-1}{k}\frac{k-3}{k-2}\dots \frac{1}{2}I_0 \quad   \text{  if $k$ even}, \\
			\displaystyle  \frac{k-1}{k}\frac{k-3}{k-2}\dots \frac{2}{3}I_1\quad  \text{  if $k$ odd},
		 \end{cases}
	\end{equation*}
with $I_0=\pi$ and $I_1=2$, and the claim \eqref{sinstuff} follows after elementary computations. And so
	\[\int_{\obal}P_r(y,x) \, dy=c(n,s) (r^2-1)^s  \frac{\pi^{n/2}}{\Gamma(n/2)}\int_r^{\infty} \frac {2\rho}{(\rho^2-r^2)^s(\rho^2-1)} \, d\rho. \]
We change variable $ \frac{\rho^2-r^2}{r^2-1}=z$ and obtain
	\begin{equation*}
		\begin{split}
                              \int_{\obal}P_r(y,x) \, dy
                               	=   c(n,s)   \frac{\pi^{n/2}}{\Gamma(n/2)} \int_0^{\infty} \frac {1}{z^s \, (z+1)}\, dz.
		\end{split}
	\end{equation*}
We use \eqref{betappl} and the value of $c(n,s)$ from \eqref{ctcns} and obtain that
	\begin{equation*} \int_{\obal}P_r(y,x) \, dy= 1. \end{equation*}
This completes the proof of Lemma \ref{Ip}.
\end{proof}

\begin{lemma}
For any $r>0$ and any $x \in B_r $
	\begin{equation}  c(n,s) \int_{B_r} \frac {dy}{(r^2-|y|^2)^s |x-y|^{n-2s}} =1 \label{If} .\end{equation}
\end{lemma}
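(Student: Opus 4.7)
The plan is to reduce identity \eqref{If} to the already-established Poisson-mass identity \eqref{Ip} by means of the Kelvin inversion with center at~$x$. Concretely, fix $x \in B_r$ and apply the point inversion $y \mapsto y^* = \mathbf{K}_x(y)$ defined in \eqref{trk} (with $x_0 = x$). By parts a) and b) of Proposition \ref{proposition:pointinv}, this is a bijection of $\mathbb{R}^n \setminus \{x\}$ onto itself that exchanges $B_r \setminus \{x\}$ with $\mathbb{R}^n \setminus \overline{B_r}$ and fixes $\partial B_r$ pointwise. Hence the domain of integration in \eqref{If} gets mapped precisely to the exterior of the ball.

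Next I would rewrite every factor of the integrand in the $y^*$ variable using the three identities in Proposition \ref{proposition:pointinv}. From \eqref{firsttr} one gets
\[ (r^2 - |y|^2)^s = \frac{|y-x|^{2s}\,(|y^*|^2 - r^2)^s}{(r^2-|x|^2)^s}, \]
from \eqref{tr} one obtains $|y-x| = (r^2-|x|^2)/|y^*-x|$, and from \eqref{dxtr} one has $dy = |y-x|^n \,|y^*-x|^{-n}\, dy^*$. Plugging these into the integrand of \eqref{If} and collecting powers of $|y-x|$ (a short bookkeeping that cancels the $|y-x|^{n-2s}$ factor against matching factors coming from $(r^2-|y|^2)^s$ and the Jacobian) yields
\[ \frac{dy}{(r^2-|y|^2)^s\,|x-y|^{n-2s}} \;=\; \frac{(r^2-|x|^2)^s}{(|y^*|^2-r^2)^s\,|y^*-x|^n}\,dy^* \;=\; \frac{P_r(y^*,x)}{c(n,s)}\,dy^*, \]
by the very definition \eqref{poissondefn} of the Poisson kernel.

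Putting the two steps together, the change of variables transforms \eqref{If} into
\[ c(n,s)\int_{B_r} \frac{dy}{(r^2-|y|^2)^s|x-y|^{n-2s}} \;=\; \int_{\mathbb{R}^n\setminus B_r} P_r(y^*,x)\,dy^*, \]
and the right-hand side equals~$1$ by Lemma~\ref{lemmaip} (identity \eqref{Ip}). The main, though entirely mechanical, obstacle is the algebraic bookkeeping to verify that the powers of $|y-x|$ in the numerator (from $dy$ and from rewriting $r^2-|y|^2$) exactly cancel the $|x-y|^{n-2s}$ in the denominator, leaving only the clean Poisson-kernel expression in~$y^*$; everything else is a direct citation of the inversion identities and of \eqref{Ip}.
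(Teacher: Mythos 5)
Your proposal is correct and coincides with the paper's own proof: both apply the Kelvin inversion $y\mapsto y^*$ centered at $x$ and invoke \eqref{tr}, \eqref{firsttr}, and \eqref{dxtr} to recast the integrand as $P_r(y^*,x)/c(n,s)$, after which \eqref{Ip} gives the result. You simply spell out the power-counting of $|y-x|$ that the paper leaves implicit.
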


\begin{proof}
Let $y^*$ be the inversion of $y$ with center at $x$ (notice that $ |y^*|>r$). Then by using \eqref{firsttr}
and \eqref{dxtr}
we obtain that
	\begin{equation*}\int_{B_r} \frac {dy}{(r^2-|y|^2)^s |x-y|^{n-2s}}=\int_{\obal} \Bigg(\frac {r^2-|x|^2}{|y^*|^2-r^2}\Bigg)^s \frac{dy^*}{|x-y^*|^n}.
	\end{equation*}
From identity \eqref{Ip} the desired result immediately follows.
\end{proof}

\begin{lemma}
For any $r>0$ and any $x \in \obal$
\begin{equation}\int_{\obal}A_r(y) \Phi(x-y)\, dy = \Phi(x)\label{Ifu}. \end{equation}
\end{lemma}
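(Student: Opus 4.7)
The plan is to reduce the identity, in both cases $n\neq 2s$ and $n=2s$, to the companion interior identity \eqref{If}, via the Kelvin point inversion $y\mapsto \eta := r^2 y/|y|^2$ of Proposition \ref{proposition:pointinv} (taken with center $x_0=0\in B_r$). This inversion bijects $\mathbb{R}^n\setminus \overline B_r$ onto $B_r\setminus\{0\}$, and the formulas \eqref{firsttr}, \eqref{dxtr}, \eqref{sectr} specialize to
\[|y|^2-r^2=\frac{r^2(r^2-|\eta|^2)}{|\eta|^2},\qquad |x-y|=\frac{|x|\,|\eta-x^\ast|}{|\eta|},\qquad dy=\Bigl(\frac{r}{|\eta|}\Bigr)^{2n}d\eta,\]
where $x^\ast:=r^2 x/|x|^2$; the crucial observation is that $x^\ast\in B_r$, since $|x|>r$.

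For $n\neq 2s$ I would substitute the explicit formulas \eqref{smeandefn} and \eqref{fundsolution} into the left-hand side and apply this change of variables. A routine bookkeeping of the powers of $|\eta|$ and $r$ shows that they all cancel, collapsing the integrand to
\[A_r*\Phi(x)\;=\;a(n,s)\,|x|^{2s-n}\cdot c(n,s)\int_{B_r}\frac{d\eta}{(r^2-|\eta|^2)^s\,|\eta-x^\ast|^{n-2s}}.\]
Since $x^\ast\in B_r$, identity \eqref{If} makes the $B_r$-integral equal to $1/c(n,s)$, so the right-hand side is exactly $a(n,s)|x|^{2s-n}=\Phi(x)$.

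For $n=2s$ (so $n=1$, $s=1/2$, with $c(1,1/2)=1/\pi$ and $\Phi(x)=-\log|x|/\pi$) the one-dimensional inversion $\eta=r^2/y$ sends $\{|y|>r\}$ to $(-r,r)$ and, after the same exponent cancellations, reduces the integrand to $[\log|x|+\log|\eta-x^\ast|-\log|\eta|]/\sqrt{r^2-\eta^2}$. The $\log|x|$ summand alone yields exactly $-\log|x|/\pi=\Phi(x)$, by $\int_{-r}^{r}d\eta/\sqrt{r^2-\eta^2}=\pi$. The real obstacle is to show that the remaining two summands cancel, which amounts to the classical log-potential identity
\[\int_{-r}^{r}\frac{\log|\eta-a|}{\sqrt{r^2-\eta^2}}\,d\eta\;=\;\pi\log(r/2)\qquad\text{for every } |a|<r,\]
i.e.\ the fact that the logarithmic potential of the Chebyshev weight on $(-r,r)$ is constant. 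This is the same type of identity (essentially \eqref{logid}) that already underlies the proof of Theorem \ref{theorem:thm1} for $n=2s$; once it is invoked, the proof is complete.
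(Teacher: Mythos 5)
Your proposal is correct and follows essentially the same route as the paper: for $n\neq 2s$ it performs the Kelvin inversion centered at $0$ and reduces to the interior identity \eqref{If}, and for $n=2s$ it uses the same inversion together with the constancy of the log-potential of the Chebyshev weight, which is exactly the content of the paper's \eqref{logid} (with the constant $\pi\log(r/2)$ reducing to $-\pi\log 2$ when $r=1$). The bookkeeping of exponents and the final cancellations all check out, so there is nothing to add.
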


\begin{proof}
We prove the claim for $n\neq 2s$. We insert definitions \eqref{smeandefn} and \eqref{fundsolution} and obtain that
 \begin{equation*}
	\begin{split}
			\int_{\obal}A_r(y) \Phi(x-y)\, dy  =  r^{2s}c(n,s) a(n,s)\int_{\obal} \frac{1}{(|y|^2-r^2)^s|y|^n |x-y|^{n-2s}}\, dy.
	\end{split}
\end{equation*}
Let $x^*$ and $y^*$ be the inversion of $x$, respectively $y$ with center at $0$. Using identities \eqref{tr},
\eqref{firsttr}
 \eqref{sectr}
and \eqref{dxtr}
we obtain that	 \begin{equation*}
		\begin{split}
			\int_{\obal}A_r(y) \Phi(x-y)\, dy
			= \frac{c(n,s)   a(n,s)}{|x|^{n-2s}}\int_{B_r} \frac{dy^*}{|x^*-y^*|^{n-2s}\,  \big( r^2-|y^*|^2 \big)^s}.
				\end{split}
	\end{equation*}
From \eqref{If} it follows that
 	\begin{equation*}
			\int_{\obal}A_r(y) \Phi(x-y)\, dy =  \frac{a(n,s)}{|x|^{n-2s}},
	\end{equation*}
and thus the desired result.

We now prove the claim for $n=2s$, assuming $r=1$. We have that
\[ \int_{\obal}A_r(y) \Phi(x-y)\, dy =  \frac{-1}{\pi^2}\int_{|y|>1} \frac {\log|y-x|}{ \sqrt{ y^2-1} |y|} dy.\]
We perform the change of variables $v= \frac{1}{y}$, with $|v|\leq 1$. We set $w:= \frac{1}{x}$, hence $|w|\leq 1$.
Then we have that
	\[ \int_{\obal}A_r(y) \Phi(x-y)\, dy   = \frac{-1}{\pi^2} \int_{|v|\leq 1}\bigg( \log \frac{|v-w|}{|v|} + \log|x|\bigg)\frac{dv}{ \sqrt{1-v^2}}.\]
We use the following result (see \cite{conto}, page 549)
\begin{equation}
	   \int_{|v|\leq 1} \log |v-a| \frac{dv}{ \sqrt{1-v^2}}=  \begin{cases}  -\pi \log 2, &\text{ if } |a|\leq 1\\
								 \pi \log (|a|+(a^2-1)^{1/2}) - \pi \log 2, &\text{ if } |a|\geq 1.
	\end{cases}
	\label{logid}
	\end{equation}
We thus obtain \[\int_{\obal}A_r(y) \Phi(x-y)\, dy = -\frac{1}\pi\log|x|,\]
which concludes the proof of the Lemma.
\end{proof}

\begin{lemma}
For any $r>0$, let $x_0 \in B_r$ be a fixed point.  For any $x \in \obal$
	 \begin{equation} \int_{\obal}P_r(y,x_0) \Phi(x-y) \, dy  = \Phi(x-x_0). \label{Ipu} \end{equation}
\end{lemma}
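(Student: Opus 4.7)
The plan is to mirror closely the proof of the neighboring identity \eqref{Ifu}: insert the explicit formulas \eqref{poissondefn} for $P_r$ and \eqref{fundsolution} for $\Phi$, then perform the point inversion with center at $x_0$, so that the integral over $\obal$ is transformed into an integral over $B_r$ to which identity \eqref{If} applies.

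Consider first the case $n\neq 2s$. Substituting the definitions, the claim reduces to
\[ a(n,s)\,c(n,s)\,(r^2-|x_0|^2)^s \int_{\obal} \frac{dy}{(|y|^2-r^2)^s\,|y-x_0|^n\,|x-y|^{n-2s}} = \frac{a(n,s)}{|x-x_0|^{n-2s}} . \]
Let $y^* := \mathbf{K}_{x_0}(y)$ and $x^* := \mathbf{K}_{x_0}(x)$. By Proposition~\ref{proposition:pointinv}, the map $y \mapsto y^*$ is a bijection from $\obal$ onto $B_r$, and $x^* \in B_r$. Using \eqref{firsttr} to rewrite $(|y|^2-r^2)^s$, \eqref{sectr} to rewrite $|x-y|^{n-2s}$, \eqref{dxtr} to change $dy$, and finally the basic relation \eqref{tr} to collapse the remaining powers of $|y-x_0|$, all occurrences of $|y-x_0|$ and $(r^2-|x_0|^2)$ telescope and one arrives at
\[ \int_{\obal} \frac{dy}{(|y|^2-r^2)^s\,|y-x_0|^n\,|x-y|^{n-2s}} = \frac{1}{(r^2-|x_0|^2)^s\,|x-x_0|^{n-2s}} \int_{B_r} \frac{dy^*}{(r^2-|y^*|^2)^s\,|y^*-x^*|^{n-2s}} . \]
Since $x^*\in B_r$, identity \eqref{If} evaluates the inner integral to $1/c(n,s)$, and after cancelling $c(n,s)(r^2-|x_0|^2)^s$ the claimed equality $\Phi(x-x_0) = a(n,s)|x-x_0|^{2s-n}$ emerges.

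For the critical case $n=2s$, take $r=1$ without loss of generality and use $a(1,\tfrac12)=-1/\pi$, $c(1,\tfrac12)=1/\pi$. The same change of variables $v=\mathbf{K}_{x_0}(y)$ (with $w:=\mathbf{K}_{x_0}(x)$) combined with \eqref{firsttr}, \eqref{dxtr} and \eqref{tr} shows that the Poisson kernel factor becomes $\tfrac{1}{\pi}\,dv/\sqrt{1-v^2}$, while \eqref{sectr} together with $|y-x_0||v-x_0|=1-x_0^2$ splits the logarithm as
\[ \log|x-y| \;=\; \log|v-w|\;-\;\log|v-x_0|\;+\;\log|x-x_0|. \]
Because $|w|\le 1$ and $|x_0|<1$, identity \eqref{logid} gives $\int_{|v|\le 1}\log|v-w|\,dv/\sqrt{1-v^2} = \int_{|v|\le 1}\log|v-x_0|\,dv/\sqrt{1-v^2} = -\pi\log 2$, so those two contributions cancel. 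The remaining term yields $-\tfrac{1}{\pi}\log|x-x_0| = \Phi(x-x_0)$.

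The main obstacle is purely computational: tracking the exponents of $|y-x_0|$ and $(r^2-|x_0|^2)$ through the point inversion in the case $n\neq 2s$, where three different factors of the integrand each contribute, and everything only simplifies after one commits to the identity $|y-x_0||y^*-x_0|=r^2-|x_0|^2$ of \eqref{tr}. In the logarithmic case $n=2s$ the analogous bookkeeping is easier but more subtle conceptually, because one must notice that the extra $\log|v-x_0|$ term produced by the Jacobian exactly matches the $\log|y-x_0|$ coming from \eqref{sectr} so as to leave a clean application of \eqref{logid}.
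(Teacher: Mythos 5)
Your proof is correct and follows essentially the same route as the paper: insert the explicit formulas for $P_r$ and $\Phi$, apply the point inversion $\mathbf{K}_{x_0}$ together with identities \eqref{tr}, \eqref{firsttr}, \eqref{dxtr}, \eqref{sectr}, and then invoke \eqref{If} for $n\neq 2s$ and \eqref{logid} for $n=2s$. You even make explicit the cancellation of the two $-\pi\log 2$ contributions in the logarithmic case, which the paper's proof leaves implicit.
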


\begin{proof}
We prove the claim for $n\neq 2s$.
We have that  \begin{equation*}
	\begin{split}
		\int_{\obal}P_r(y,x_0) \Phi(x-y) \, dy = 	c(n,s) a(n,s) \int_{\obal} \frac{(r^2-|x_0|^2)^s|x-y|^{2s-n}\, dy} {(|y|^2-r^2)^s|y-x_0|^n} .
	\end{split}
\end{equation*}
Let $x^*$ and $y^*$ be the inversion of $x$, respectively $y$ with center at $x_0$. From \eqref{tr},
 \eqref{firsttr}
\eqref{dxtr} and
\eqref{sectr}
 we have that
	\begin{equation*}\begin{split}
				&  \int_{\obal}P_r(y,x_0) \Phi(x-y) \, dy \\ =&\; c(n,s) a(n,s) \frac{|x^*-x_0|^{n-2s}  }{(r^2-|x_0|^2)^{n-2s}  } \int_{B_r}  \frac {dy^*}{(r^2-|y^*|^2)^s|y^*-x^*|^{n-2s}}.
		\end{split} \end{equation*}
Using \eqref{If}, we obtain that \begin{equation*} \int_{\obal}P_r(y,x_0) \Phi(x-y) \, dy =  \frac{a(n,s)}{|x-x_0|^{n-2s}},\end{equation*}
which concludes the proof for $n\neq 2s$.

We now prove the claim for $n=2s$, assuming $r=1$. We have that
\[ \int_{\obal}P_r(y,x_0) \Phi(x-y) \, dy = \frac{-1}{\pi^2} \int_{|y|>1} \sqrt{ \frac{1-{x_0}^2}{y^2-1}}  \frac{\log|y-x|}{|y-x_0|} dy.\]
We perform the change of variables $v=\frac{yx_0-1}{y-x_0}$, noticing that $|v|\leq 1$. We set $w:= \frac{x x_0-1}{x-x_0}$, hence $|w|\leq 1$.
Then we have that
	\[\int_{\obal}P_r(y,x_0) \Phi(x-y) \, dy =\frac{-1}{\pi^2} \int_{|v|\leq 1} \bigg( \log  \frac{|v-w|}{|v-x_0|}+ \log |x-x_0|\bigg) \frac{ dv}{\sqrt{1-v^2}}.\]
We use identity \eqref{logid} and obtain \[\int_{\obal}P_r(y,x_0) \Phi(x-y) \, dy = -\frac{1}{\pi}\log|x-x_0|,\]
which concludes the proof.
\end{proof}
 We emphasize here two computations that we used in the proof of Lemma \ref{lemmaip}, namely identities \eqref{aahhh} and \eqref{sinstuff}.

\begin{proposition} For any $\tau >1$
	\begin{equation} \label{prop1}  \int_0^{\pi} \frac {\sin^{n-2}\theta}{(\tau^2-2\tau \cos\theta +1)^{n/2}} \, d\theta = \frac {1} {\tau^{n-2} (\tau^2-1)}\int_0^{\pi} \sin^{n-2} \alpha \, d\alpha .\end{equation}
\end{proposition}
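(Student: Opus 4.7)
The plan is to carry out explicitly the substitution already employed (but not isolated as a lemma) inside the proof of Lemma~\ref{lemmaip}. Concretely, I will use the change of variables
\begin{equation*}
\sin\alpha \;=\; \frac{\tau \sin\theta}{\sqrt{\tau^2-2\tau\cos\theta+1}}, \qquad \cos\alpha \;=\; \frac{\tau\cos\theta - 1}{\sqrt{\tau^2-2\tau\cos\theta+1}},
\end{equation*}
which, since $\tau>1$, defines a smooth increasing bijection $\theta \mapsto \alpha$ from $[0,\pi]$ onto itself, sending endpoints to endpoints. This substitution has a clean geometric interpretation: if $O$, $X$, $Y$ are points with $|OX|=1$, $|OY|=\tau$ and angle $\theta$ at $O$, then $\alpha$ is the exterior angle at $X$ in the triangle $\triangle OXY$.

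The second step is to record the two identities
\begin{equation*}
\sqrt{\tau^2-2\tau\cos\theta+1} \;=\; \sqrt{\tau^2-\sin^2\alpha} - \cos\alpha, \qquad d\theta \;=\; \Bigl(1 - \frac{\cos\alpha}{\sqrt{\tau^2-\sin^2\alpha}}\Bigr) d\alpha,
\end{equation*}
the first by squaring the definitions and solving a quadratic in the radical $\sqrt{\tau^2-2\tau\cos\theta+1}$, the second by differentiating the relation for $\sin\alpha$ and simplifying; both appear implicitly in \eqref{chvar}--\eqref{capduedue}. Substituting these into the left-hand integrand and simplifying gives
\begin{equation*}
\frac{\sin^{n-2}\theta\,d\theta}{(\tau^2-2\tau\cos\theta+1)^{n/2}} \;=\; \frac{\sin^{n-2}\alpha\,d\alpha}{\tau^{n-2}\bigl(\sqrt{\tau^2-\sin^2\alpha}-\cos\alpha\bigr)\sqrt{\tau^2-\sin^2\alpha}}.
\end{equation*}

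To finish I will rationalize by multiplying numerator and denominator by $\sqrt{\tau^2-\sin^2\alpha}+\cos\alpha$ and use the telescoping $(\tau^2-\sin^2\alpha)-\cos^2\alpha = \tau^2-1$. This splits the integral into two pieces: one of the form $\frac{1}{\tau^{n-2}(\tau^2-1)}\int_0^{\pi} \sin^{n-2}\alpha\,d\alpha$, which is the target, and a remainder $\frac{1}{\tau^{n-2}(\tau^2-1)}\int_0^{\pi} \frac{\sin^{n-2}\alpha\,\cos\alpha}{\sqrt{\tau^2-\sin^2\alpha}}\,d\alpha$ whose integrand is antisymmetric under $\alpha \mapsto \pi-\alpha$ and therefore vanishes. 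The main obstacle is purely algebraic bookkeeping: correctly selecting the sign of the radical under the substitution (the alternative root would give $\sqrt{\tau^2-\sin^2\alpha}+\cos\alpha$, which does not factor out the singularity) and recognizing the identity $\tau^2-\sin^2\alpha-\cos^2\alpha = \tau^2-1$ that collapses the denominator. Since all these ingredients already appear in the proof of Lemma~\ref{lemmaip}, the proof of the proposition essentially amounts to repackaging that calculation as a standalone statement.
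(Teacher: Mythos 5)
Your proof is correct and takes essentially the same route as the paper, which carries out exactly this substitution inside the proof of Lemma~\ref{lemmaip} (equations \eqref{changeofvar}--\eqref{chvar} and the rationalization with $\sqrt{\tau^2-\sin^2\alpha}+\cos\alpha$). The only difference is that you spell out the companion relation $\cos\alpha=(\tau\cos\theta-1)/\sqrt{\tau^2-2\tau\cos\theta+1}$ and the triangle interpretation of $\alpha$ explicitly, which the paper leaves implicit.
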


\begin{proposition}
\begin{equation} \pi \prod_{k=1}^{n-2} \int_0^{\pi}\sin^k\theta \, d\theta = \frac{{\pi}^{n/2}}{\Gamma (n/2)}.\label{prop2} \end{equation}
\end{proposition}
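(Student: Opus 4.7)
The plan is to evaluate each Wallis-type integral $I_k := \int_0^\pi \sin^k\theta\, d\theta$ in closed form by means of the Beta function, and then to observe that the resulting product telescopes. Since $\sin^k(\pi-\theta) = \sin^k\theta$, we have $I_k = 2\int_0^{\pi/2} \sin^k\theta\, d\theta$, and the substitution $u = \sin^2\theta$ converts this into
\[ I_k = \int_0^1 u^{(k-1)/2} (1-u)^{-1/2}\, du = B\!\left(\tfrac{k+1}{2},\tfrac{1}{2}\right). \]
By identities \eqref{betazerouno} and \eqref{betagamma} together with $\Gamma(1/2) = \sqrt{\pi}$, this yields the closed form
\[ I_k = \sqrt{\pi}\,\frac{\Gamma\!\bigl(\tfrac{k+1}{2}\bigr)}{\Gamma\!\bigl(\tfrac{k+2}{2}\bigr)}. \]

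Substituting this expression into the product gives
\[ \prod_{k=1}^{n-2} I_k = \pi^{(n-2)/2} \prod_{k=1}^{n-2} \frac{\Gamma\!\bigl(\tfrac{k+1}{2}\bigr)}{\Gamma\!\bigl(\tfrac{k+2}{2}\bigr)}. \]
Setting $a_k := \Gamma\!\bigl(\tfrac{k+1}{2}\bigr)$, the denominator at index $k$ is precisely $a_{k+1}$, so all intermediate factors cancel and only the first numerator $a_1 = \Gamma(1) = 1$ and the last denominator $a_{n-1} = \Gamma(n/2)$ survive. Hence
\[ \prod_{k=1}^{n-2} I_k = \frac{\pi^{(n-2)/2}}{\Gamma(n/2)}. \]

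Multiplying both sides by $\pi$ produces exactly $\pi^{n/2}/\Gamma(n/2)$, which is the asserted identity \eqref{prop2}. There is no substantive obstacle in the argument; the only care needed is to track the endpoints of the telescoping product correctly. As an internal sanity check, one can recover from the closed-form expression the recursion $I_k = \tfrac{k-1}{k}\, I_{k-2}$ with $I_0 = \pi$ and $I_1 = 2$, which is precisely the recursion already derived in the proof of Lemma \ref{lemmaip}, so the two approaches agree on small cases (for instance $n=3$ gives $\pi\cdot I_1 = 2\pi = \pi^{3/2}/\Gamma(3/2)$).
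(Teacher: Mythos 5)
Your argument is correct, but it follows a genuinely different route from the paper's. The paper proves \eqref{prop2} by integrating $\int_0^\pi \sin^k\theta\,d\theta$ by parts to obtain the Wallis recursion $I_k=\frac{k-1}{k}I_{k-2}$ with $I_0=\pi$, $I_1=2$, splits into the even and odd cases, and then states that \eqref{prop2} ``follows after elementary computations,'' leaving the final assembly to the reader. You instead evaluate each $I_k$ in closed form at once via the substitution $u=\sin^2\theta$, recognizing $I_k = B\bigl(\tfrac{k+1}{2},\tfrac12\bigr) = \sqrt{\pi}\,\Gamma\bigl(\tfrac{k+1}{2}\bigr)/\Gamma\bigl(\tfrac{k+2}{2}\bigr)$ through \eqref{betazerouno} and \eqref{betagamma}, and then the product telescopes transparently. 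Your approach buys a single closed formula and an immediate telescoping cancellation, with no parity case-split and no ``elementary computations'' left implicit; it also stays entirely within the Beta/Gamma toolbox the paper already introduces. The paper's integration-by-parts recursion is more self-contained in the sense of not invoking the Beta function for this particular step, which is presumably why it was chosen, but the resulting bookkeeping (separate even/odd products that must be recombined) is more cumbersome. Both are standard and correct; your check that the closed form reproduces the recursion and the explicit $n=3$ case are a nice confirmation that the two derivations agree.
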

In the next Proposition we introduce yet another integral identity.
\begin{proposition}
\label{proposition:uss} Let $\alpha, \beta \in \mathbb{R}$ such that $ \bigg| \frac{\alpha}{\alpha +\beta}\bigg| < 1$. Then
\[  \int_0^\alpha \frac{(\alpha-x)^{s-1}}{x^s(\beta+x)}\, dx  = \frac{\pi}{\sin(\pi s)} \frac{(\alpha+\beta)^{s-1}} {\beta^s}.\]
\end{proposition}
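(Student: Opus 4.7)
The plan is to reduce the integral to a standard hypergeometric integral on $[0,1]$ and then use an elementary closed-form evaluation.

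First I would rescale by $t=x/\alpha$, so that $(\alpha-x)^{s-1}=\alpha^{s-1}(1-t)^{s-1}$, $x^s=\alpha^s t^s$, and $\beta+x=\beta+\alpha t$. This reduces the problem to
\[ \int_0^\alpha \frac{(\alpha-x)^{s-1}}{x^s(\beta+x)}\,dx \;=\; \int_0^1 \frac{(1-t)^{s-1}}{t^s(\beta+\alpha t)}\,dt. \]
Next I would set $w:=\alpha/(\alpha+\beta)$ (so $|w|<1$ by hypothesis) and use the algebraic identity $\beta+\alpha t=(\alpha+\beta)\bigl(1-w(1-t)\bigr)$, which factors out $\alpha+\beta$ and makes $(1-t)$ play the role of the hypergeometric variable. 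Then substituting $u=1-t$ I would obtain
\[ \int_0^1 \frac{(1-t)^{s-1}}{t^s(\beta+\alpha t)}\,dt \;=\; \frac{1}{\alpha+\beta}\int_0^1 \frac{u^{s-1}(1-u)^{-s}}{1-wu}\,du. \]

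The key step is to recognize the last integral as the Euler integral representation \eqref{inthyp} of the hypergeometric function. With the parameter choices $a=1$, $b=s$, $c=1$ (so $c-b-1=-s$, $c>b>0$) it reads
\[ \int_0^1 u^{s-1}(1-u)^{-s}(1-wu)^{-1}\,du \;=\; \frac{\Gamma(s)\Gamma(1-s)}{\Gamma(1)}\, F(1,s,1,w). \]
By the symmetry $F(a,b,c,w)=F(b,a,c,w)$ of the Gauss series \eqref{gausshyp} we have $F(1,s,1,w)=F(s,1,1,w)$, which falls under the elementary identity \eqref{hypelc1} and evaluates to $(1-w)^{-s}$. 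Finally I would insert the reflection formula \eqref{gam3} in the form $\Gamma(s)\Gamma(1-s)=\pi/\sin(\pi s)$ and compute $1-w=\beta/(\alpha+\beta)$, so $(1-w)^{-s}=(\alpha+\beta)^s/\beta^s$, yielding
\[ \frac{1}{\alpha+\beta}\cdot \frac{\pi}{\sin(\pi s)}\cdot \frac{(\alpha+\beta)^s}{\beta^s} \;=\; \frac{\pi}{\sin(\pi s)}\,\frac{(\alpha+\beta)^{s-1}}{\beta^s}, \]
which is the desired identity.

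There is no real obstacle here; the delicate point is merely to package the affine factor $\beta+\alpha t$ in the form $(\alpha+\beta)\bigl(1-w(1-t)\bigr)$ so that the Gauss hypergeometric template applies directly, and to note that the convergence hypothesis $|w|<1$ provided in the statement is exactly what is needed to invoke the integral representation \eqref{inthyp}. One could alternatively prove the identity by analytic continuation from the real positive case, or by residues, but the hypergeometric route is entirely in the spirit of the appendix and reuses identities already listed.
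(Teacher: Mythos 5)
Your proof is correct, and while it lives in the same circle of ideas as the paper's proof (rescale, identify an Euler integral \eqref{inthyp}, evaluate via a standard hypergeometric reduction), the specific route is genuinely different. The paper scales out $\beta$ rather than $\alpha+\beta$, so its Euler integral produces $\Gamma(s)\Gamma(1-s)\,F\big(1,1-s,1,-\alpha/\beta\big)$; it then applies the linear transformation \eqref{hyp3} to move the argument to $\alpha/(\alpha+\beta)$, at which point the upper parameter becomes $0$ and the Gauss series \eqref{gausshyp} collapses to $1$. You instead perform the factorization $\beta+\alpha t=(\alpha+\beta)\big(1-w(1-t)\big)$ with $w=\alpha/(\alpha+\beta)$ and substitute $u=1-t$, landing directly on $\Gamma(s)\Gamma(1-s)\,F(1,s,1,w)$, which you close out with the symmetry of $F$ in its first two arguments and the elementary identity \eqref{hypelc1}. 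Your variant has two small advantages: the argument of $F$ is $\alpha/(\alpha+\beta)$ from the outset, so the hypothesis $|\alpha/(\alpha+\beta)|<1$ is exactly the convergence condition for \eqref{inthyp} without further comment, whereas the paper invokes \eqref{inthyp} with $w=-\alpha/\beta$ (which need not satisfy $|w|<1$) and has to justify convergence of the integral separately; and you avoid the linear transformation step entirely. The paper's route in turn avoids the extra change of variables $u=1-t$. Both are clean; yours is marginally more self-contained on the convergence point.
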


\begin{proof}

We change the variable $x =\alpha t$ and obtain that
	\begin{equation*}
		 \int_0^\alpha \frac{(\alpha-x)^{s-1}}{x^s(\beta+x)}\, dx = \frac{1}{\beta}  \int_0^1 t^{-s} (1-t)^{s-1} \bigg(1+ \frac{\alpha}{\beta}t\bigg)^{-1}\, dt.
	\end{equation*}
We use the integral definition \eqref{inthyp} of the hypergeometric function for $a=1$, $b=1-s$, $c=1$ and $w= -\frac{\alpha}{\beta}$ (since $|t|<1$ the integral is convergent) and we obtain that
	\[  \int_0^1 t^{-s} (1-t)^{-s} \bigg(1+ \frac{\alpha}{\beta}t\bigg)^{-1}\, dz =  \frac{\Gamma(s) \Gamma(1-s)}{\Gamma(1)} F\bigg(1,1-s,1,-\frac{\alpha}{\beta}\bigg).\]
Now we use the linear transformation \eqref{hyp3} and compute
	\[ F\bigg(1,1-s,1,-\frac{\alpha}{\beta}\bigg) = \bigg(\frac{\alpha+\beta}{\beta} \bigg)^{s-1} F\bigg(1-s,0,1,\frac{\alpha}{\alpha+\beta}\bigg)  .\]
We use the Gauss expansion in \eqref{gausshyp} and notice that for $k>0$, all the terms of the sum vanish. We are left with only with the term $k=0$ and obtain that
	\[ F\bigg(1-s,0,1,\frac{\alpha}{\alpha+\beta}\bigg)=1.\]
Furthermore, $\Gamma(1-s)\Gamma(s)= \frac{\pi}{\sin (\pi s)}$ (as in identity \eqref{gam3}) and it follows that
	\begin{equation*}\int_0^\alpha \frac{(\alpha-x)^{s-1}}{x^s(\beta+x)}\, dx=  \frac{\pi}{\sin \pi s} \frac{(\alpha+\beta)^{s-1}}{\beta^s}.\qedhere \end{equation*}
\end{proof}

We explicitly compute here another integral that was used in our computations, namely :

\begin{proposition}For any $s\in (0,1/2]$ we have that
\begin{equation} \label{ctcomp1111} \int_0^\infty t^{2s-2} \sin t \, dt =-\cos(\pi s) \Gamma(2s-1).\end{equation}
\end{proposition}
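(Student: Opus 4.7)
The strategy is to establish the classical Mellin transform identity
\[ \int_0^\infty t^{b-1}\cos t\,dt \;=\; \Gamma(b)\cos\!\left(\tfrac{\pi b}{2}\right) \qquad (0<b<1), \]
and then reduce the target integral to this identity via a single integration by parts, invoking the recursion $\Gamma(2s)=(2s-1)\Gamma(2s-1)$ from \eqref{gamxx1} at the end to identify the prefactor. The $-\cos(\pi s)$ in the final answer arises from the trigonometric collapse $\sin(\pi s-\pi/2)=-\cos(\pi s)$, but the route through the cosine transform keeps everything real-valued.

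For the cosine Mellin identity itself, I would start from the Gamma integral $\int_0^\infty t^{b-1}e^{-\lambda t}\,dt=\Gamma(b)\lambda^{-b}$, valid for $\mathrm{Re}\,\lambda>0$, and analytically continue to $\lambda=-i$ by setting $\lambda=\varepsilon-i$ and sending $\varepsilon\to 0^+$. For $b\in(0,1)$, the left-hand side converges (by a Dirichlet/Abel-type argument controlling the oscillatory tail) to $\int_0^\infty t^{b-1}e^{it}\,dt$, while the principal branch gives $(-i)^{-b}=e^{i\pi b/2}$; matching real parts delivers the identity. Equivalently, one applies Cauchy's theorem to $z^{b-1}e^{iz}$ on the first-quadrant sector with inner radius $\varepsilon$ and outer radius $R$: the Jordan inequality $\sin\theta\ge 2\theta/\pi$ on $[0,\pi/2]$ kills the large arc (requires $b<1$), the bound $O(\varepsilon^b)$ kills the small arc (requires $b>0$), and the segment along the imaginary ray, after the substitution $z=iu$, reproduces $\Gamma(b)$ up to the phase $e^{i\pi b/2}$.

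For the reduction to the target, fix $s\in(0,1/2)$ and integrate $\int_0^\infty t^{2s-2}\sin t\,dt$ by parts with $u=\sin t$, $dv=t^{2s-2}\,dt$. The boundary term $\tfrac{t^{2s-1}\sin t}{2s-1}$ vanishes at $t=0$ (since $t^{2s-1}\sin t\sim t^{2s}$ and $s>0$) and at $t=\infty$ (since $t^{2s-1}\to 0$ for $s<1/2$ while $\sin t$ stays bounded), leaving
\[ \int_0^\infty t^{2s-2}\sin t\,dt \;=\; -\frac{1}{2s-1}\int_0^\infty t^{2s-1}\cos t\,dt. \]
Applying the Mellin identity with $b=2s\in(0,1)$ gives the right-hand integral as $\Gamma(2s)\cos(\pi s)$, and the Gamma recursion then produces exactly $-\Gamma(2s-1)\cos(\pi s)$. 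The endpoint $s=1/2$ is recovered by continuity: $\cos(\pi s)$ has a simple zero at $s=1/2$ that cancels the simple pole of $\Gamma(2s-1)$ at the same point, yielding the finite limit $\pi/2=\int_0^\infty t^{-1}\sin t\,dt$.

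The main obstacle is Step~1: justifying the cosine Mellin identity without heavy machinery. The large-arc contribution (equivalently, passing to the limit $\varepsilon\to 0^+$ under the integral) is only marginally convergent as $b\uparrow 1$ and cannot be handled by naive $L^1$-domination; one really needs the Jordan-type estimate above. Once that delicate point is settled, everything else is routine bookkeeping with the Gamma function identities already collected in the Appendix.
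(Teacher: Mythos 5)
Your proof is correct, and it takes a genuinely different route from the paper. The paper applies Cauchy's theorem directly to $z^{2s-2}e^{-z}$ on a quarter-circle contour through the origin and then identifies $\int_0^\infty t^{2s-2}e^{-t}\,dt$ with $\Gamma(2s-1)$ via the integral definition \eqref{gamma}. You instead integrate by parts first (lowering the power of $t$ from $2s-2$ to $2s-1$), then apply the analogous sector-contour argument to $z^{2s-1}e^{iz}$, and finish with the recursion $\Gamma(2s)=(2s-1)\Gamma(2s-1)$. This preprocessing buys real rigor in the stated range $s\in(0,1/2]$: there $2s-2\in(-2,-1]$, so $z^{2s-2}e^{-z}$ fails to be integrable at the origin, the contribution of a small excised arc of radius $\varepsilon$ is $O(\varepsilon^{2s-1})$ and does \emph{not} vanish, and the companion real integral $\int_0^\infty t^{2s-2}e^{-t}\,dt$ actually diverges and equals $\Gamma(2s-1)$ only through analytic continuation rather than through \eqref{gamma}. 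After your integration by parts the exponent $2s-1\in(-1,0]$ puts the small arc in the vanishing regime $O(\varepsilon^{2s})$, and the companion Gamma integral $\int_0^\infty t^{2s-1}e^{-t}\,dt=\Gamma(2s)$ is genuinely convergent for all $s>0$. You also address the degenerate endpoint $s=1/2$ (the pole of $\Gamma(2s-1)$ cancelled by the zero of $\cos\pi s$) by a continuity argument, a point the paper leaves implicit.
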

\begin{proof}
We have that
\begin{equation}\label{sti1} \int_0^{\infty} t^{2s-2} \sin t \, dt= - \text{Im} \int_0^{\infty}  t^{2s-2} e^{-it}\, dt. \end{equation}
We consider the closed path $\Omega_{\rho} =\partial\Big( \big([0,\rho]\times [0,\rho] \big) \cap B_\rho(0) \Big)$. We take the contour integral $\int_{\Omega_{\rho}} z^{2s-2} e^{-z}\, dz $, and let $\gamma_{\rho} = \partial  B_\rho(0) \cap  \big([0,\rho]\times [0,\rho] \big) $ (the boundary of the quarter of the circle). By Cauchy's Theorem, the contour integral is 0 (there are no poles inside $\Omega_{\rho}$), therefore
\[ \int_0^{\rho} t^{2s-2}e^{-t} \, dt + \int_{\gamma_{\rho}} z^{2s-2} e^{-z}\, dz - i \int_0^{\rho} (it)^{2s-2} e^{-it}\, dt =0.\] Integrating along $\gamma_{\rho}$, by using polar coordinates $z={\rho}e^{i\theta}$ and then the change of variables $\cos \theta =t$ we have that
	\[ \begin{split} \bigg| \int_{\gamma_{\rho}} z^{2s-2} e^{-z} \, dz \bigg| =&\; \bigg| \int_0^{\pi/2} {\rho}^{2s-1} e^{i\theta (2s-1)} e^{-{\rho}e^{i\theta}} \, d\theta\bigg|
	\leq {\rho}^{2s-1} \bigg| \int_0^{\pi/2} e^{-{\rho}\cos \theta}\, d\theta \bigg| \\
	 				=&\; {\rho}^{2s-1} \bigg| \int_0^1 \frac{ e^{-{\rho}t}}{\sqrt{1-t^2} }\, dt \bigg| \\
	 				\leq&\; {\rho}^{2s-1} e^{-{\rho}/2}  \bigg| \int_{1/2}^1 (1-t)^{-1/2}\, dt \bigg|   + \overline  c {\rho}^{2s-1}  \bigg|  \int_0^{1/2}  e^{-{\rho}t} \, dt \bigg| \\
	 				=&\; c  {\rho}^{2s-1} e^{-{\rho}/2} + \overline c {\rho}^{2s-2} (e^{-{\rho}/2}-1) .
\end{split}\]  Hence \[ \lim_{{\rho}\to \infty} \int_{\gamma_{\rho}} z^{2s-2} e^{-z} \, dz =0\]
and we are left only with the integrals along the real and the imaginary axis, namely
	\[ \int_0^{\infty} t^{2s-2}e^{-t} \, dt = i^{2s-1} \int_0^{\infty} t^{2s-2} e^{-it} \, dt .\] Here the left hand side returns the Gamma function according to definition \eqref{gamma}. We compute $i^{1-2s} =\big( \cos (\pi/2) + i\sin(\pi /2)\big) ^{1-2s}= \sin(\pi s)+ i\cos(\pi s)$ and in \eqref{sti1} we obtain that
		 \[   \int_0^{\infty} t^{2s-2} \sin t \, dt= - \Gamma(2s-1)  \text{Im} \Big(   \sin(\pi s)+ i\cos(\pi s)\Big) = -\cos(\pi s) \Gamma(2s-1).\]
		 This concludes the proof of the Proposition.
\end{proof}

\section*{Acknowledgments} {I am greatly indebted to Professor Enrico Valdinoci for his patience, his guidance and his precious help. I sincerely thank Matteo Cozzi for his appropriate and very useful observations. A great thanks also to Milosz Krupski, for his attentive reading and very nice remarks.}

\medskip
Received xxxx 20xx; revised xxxx 20xx.
\medskip

\end{document}